\newcommand{\ff}{\footnote}
\newcommand{\nc}{\newcommand}
\newtheorem{thm}{Theorem}
\newtheorem{prop}[thm]{Proposition}
\newtheorem{cor}[thm]{Corollary}
\newtheorem{lem}[thm]{Lemma}
\theoremstyle{definition}
\newtheorem{defn}[thm]{Definition}
\newtheorem{remark}[thm]{Remark}
\numberwithin{thm}{section}
\nc{\bdm}{\begin{displaymath}}
\nc{\edm}{\end{displaymath}}
\nc{\bthm}{\begin{thm}}
\nc{\ethm}{\end{thm}}
\nc{\blem}{\begin{lem}}
\nc{\elem}{\end{lem}}
\nc{\bcor}{\begin{cor}}
\nc{\ecor}{\end{cor}}
\nc{\beq}{\begin{equation}}
\nc{\eeq}{\end{equation}}
\nc{\bprop}{\begin{prop}}
\nc{\eprop}{\end{prop}}
\nc{\bdefn}{\begin{defn}}
\nc{\edefn}{\end{defn}}
\newcommand{\Z}{\mathbb{Z}}
\newcommand{\N}{\mathbb{N}}
\newcommand{\Q}{\mathbb{Q}}
\newcommand{\R}{\mathbb{R}}
\newcommand{\C}{\mathbb{C}}
\nc{\T}{\mathbb{T}}
\renewcommand{\O}{\mathcal{O}}
\nc{\Proj}{\mathbb{P}^1}
\nc{\Y}{\mathbb{Y}}
\nc{\X}{\mathbb{X}}
\nc{\mf}{\mathfrak}
\nc{\mc}{\mathcal}
\nc{\ms}{\mathscr}
\nc{\W}{\mathscr{W}}
\nc{\D}{\mathscr{D}}
\nc{\AW}{\ms{A}}
\nc{\ds}{\dots}
\renewcommand{\Rp}{\textrm{Rep}(Q,\nu)}
\nc{\Hom}{\mathrm{Hom}}
\nc{\p}{\partial}
\nc{\Rpbar}{\textrm{Rep}(\bar{Q},\nu)}
\nc{\h}{\mf{h}}
\nc{\hr}{\mf{h}_{\textrm{reg}}}
\renewcommand{\deg}{\textrm{deg}}
\nc{\B}{\ms{B}}
\renewcommand{\L}{\ms{L}}
\nc{\Rad}{\mf{R}}
\nc{\mbf}{\mathbf}
\nc{\biP}[2]{ {}_{#1} \ms{P}_{#2}}
\nc{\Ft}{\ms{F}_t}
\nc{\End}{\mathrm{End}}  
\nc{\Mod}{\mathrm{Mod}}  
\nc{\algD}{\mf{D}}
\nc{\g}{\mf{g}}
\nc{\gr}{\mathsf{gr}}
\nc{\U}{\mathsf{U}}
\nc{\E}{\mathsf{E}}
\nc{\CA}{\textrm{Circ}(\mc{A})}
\nc{\CB}{\textrm{Circ}(\mc{B})}
\nc{\usE}{\mathbb{E}}
\nc{\posB}{\textrm{pos}(\mc{B})}
\nc{\mE}{\mathcal{E}}
\nc{\ann}{\textrm{ann}}
\nc{\Span}{\textrm{Span}}
\newcommand{\mmod}{\text{-}\mathrm{mod}}
\nc{\F}{F}
\nc{\M}{M}
\nc{\msf}{\mathsf}
\nc{\kt}{\mbf{k}}
\nc{\ko}{\mbf{k}(0)}
\nc{\Rep}{\textsf{Rep}}
\nc{\Ok}{\mc{O}_G \boxtimes \kt_X}
\nc{\Oko}{\mc{O}_G \boxtimes \ko_X}
\nc{\OYk}{\mc{O}_Y \boxtimes \kt_X}
\nc{\id}{\msf{id}}
\nc{\Pic}{\msf{Pic}}
\nc{\Xo}{\mf{X}}
\nc{\halfZ}{\frac{1}{2} \Z}
\nc{\txm}{\textrm}
\nc{\grr}{\mathrm{gr}}
\nc{\Spec}{\mathrm{Spec} \ }
\nc{\Frac}{\mathrm{Frac} \ }
\nc{\ra}{\longrightarrow}
\nc{\alg}{\mathrm{alg}}
\def\hp{\hphantom{x}}
\renewcommand{\o}{\otimes}
\nc{\Loc}{\mathsf{Loc}}
\nc{\Sec}{\mathsf{Sec}}
\begin{document}

\title{On Deformation Quantizations of Hypertoric Varieties}
\author{Gwyn Bellamy}
\address{School of Mathematics and Statistics, University of Glasgow, University Gardens, Glasgow G12 8QW, Scotland}
\email{gwyn.bellamy@glasgow.ac.uk}
\author{Toshiro Kuwabara}
\address{Department of Mathematics,
National Research University Higher School of Economics,
7 Vavilova st., Moscow 117312 Russia}
\email{toshiro.kuwa@gmail.com}

\begin{abstract}
\noindent Based on a construction by Kashiwara and Rouquier, we present an analogue of the Beilinson-Bernstein localization theorem for hypertoric varieties. In this case, sheaves of differential operators are replaced by sheaves of $W$-algebras. As a special case, our result gives a localization theorem for rational Cherednik algebras associated to cyclic groups.
\end{abstract}

\maketitle

\section{Introduction}

Kontsevich \cite{Kontsevich} and Polesello and Schapira \cite{PS} have shown that one can construct a stack of ``$W$-algebroids'' (or deformation-quantization algebroids) on any symplectic manifold. These stacks of $W$-algebroids provide a quantization of the sheaf of holomorphic functions on the manifold. In certain cases, these stacks of $W$-algebroids are the algebroids associated to a sheaf of non-commutative algebras called $W$-algebras. Locally this is always the case. When the symplectic manifold in question is the Hamiltonian reduction of a space equipped with a genuine sheaf of $W$-algebras, Kashiwara and Rouquier \cite{KR} have shown that one can define a family of sheaves of $W$-algebras on the Hamiltonian reduction coming from the sheaf upstairs. This provides a large class of examples of sheaves of $W$-algebras on non-trivial symplectic manifolds. In this paper we study $W$-algebras on the simplest class of Hamiltonian reductions, those coming from the action of a torus $T$ on a symplectic vector space $V$. These spaces $Y(A,\delta)$, where $A$ is a matrix encoding the action of $T$ on $V$ and $\delta \in \X(T)$ is a character of $T$, are called hypertoric varieties. They were originally studied as hyperk\"ahler manifolds by Bielawski and Dancer \cite{BielawskiDancer}. Examples of hypertoric varieties include the cotangent space of projective $n$-space and resolutions of cyclic Kleinian singularities. More generally, the cotangent space of any smooth toric variety can be realized as a dense, open subvariety of the corresponding hypertoric variety.\\

One can also associate to the data of a reductive group $G$ acting on a symplectic vector space a certain family of non-commutative algebras $\U_\chi$, where $\chi \in \X(\mf{g})$ is a character of $\mf{g} = \mathrm{Lie} (G)$, called quantum Hamiltonian reductions. In the case $G = T$ is a torus, these algebras have been extensively studied by Musson and Van den Bergh \cite{MVdB}. The main goal of this paper is to prove a localization theorem, analogous to the celebrated Beilinson-Bernstein localization theorem \cite{BB}, giving an equivalence between the category of finitely generated modules for the quantum Hamiltonian reduction and a certain category of modules for a $W$-algebra. When the character $\delta$ is chosen to lie in the interior $C$ of a G.I.T. chamber in $\X(T)$ the hypertoric variety $Y(A,\delta)$ is a symplectic manifold. Then each character $\chi \in \X(\mf{t})$ gives a sheaf of $W$-algebras $\AW_\chi$ on $Y(A,\delta)$. Associated to $\AW_\chi$ is a category of ``good'' $\AW_\chi$-modules, $\Mod^{\mathrm{good}}_F(\AW_\chi)$ and a sub-category $\underline{\Mod}^{\mathrm{good}}_F(\AW_\chi)$ consisting (roughly) of those modules generated by their global section (the reader is referred to section 2 for the precise definition of these categories). Then we have natural localization and global section functors
\vspace*{2mm} 
$$
\begin{array}{ll}
\Loc : \U_\chi \mmod \longrightarrow \Mod^{\mathrm{good}}_F(\AW_\chi), & \Loc(M) = \AW_\chi \otimes_{\U_\chi} M, \\
 & \\
\Sec : \Mod^{\mathrm{good}}_F(\AW_\chi) \longrightarrow \U_\chi \mmod, & \Sec(\ms{M}) = \Hom_{\Mod_F^{\mathrm{good}}(\AW_\chi)}(\AW_\chi , \ms{M}).
\end{array}
$$
\vspace*{2mm} 
Our main result can be stated as follows.

\begin{thm}
Let $\chi \in C_{\Q}$. 
\begin{enumerate}[(i)]
\item The functor $\Loc$ defines an equivalence of categories 
$$
\U_\chi \mmod \, \stackrel{\sim}{\longrightarrow} \, \underline{\Mod}^{\mathrm{good}}_F(\AW_\chi) 
$$
with quasi-inverse $\Sec$.
\item There exists some $\theta \in C \cap \X(T)$ such that the functor $\Loc$ defines an equivalence of categories
$$
\U_{\chi + \theta} \mmod \, \stackrel{\sim}{\longrightarrow} \, \Mod^{\mathrm{good}}_F(\AW_{\chi + \theta}) 
$$
with quasi-inverse $\Sec$.
\end{enumerate}
\end{thm}

The theorem shows that localization always gives an equivalence of categories, provided one is sufficiently far away from the G.I.T. walls. A corollary of the above theorem is:

\begin{cor}
Let $\chi \in C_{\Q}$. If the global dimension of $\U_{\chi}$ is finite then the functor $\Loc$ defines an equivalence of categories 
$$
\U_\chi \mmod \, \stackrel{\sim}{\longrightarrow} \, \Mod^{\mathrm{good}}_F(\AW_\chi) 
$$
with quasi-inverse $\Sec$.
\end{cor}

A particular class of examples of hypertoric varieties are the minimal resolutions $\widetilde{\C^2 / \Z_m}$ of the Kleinian singularities of type $A$. Under mild restrictions on the parameters, the corresponding quantum Hamiltonian reductions are Morita equivalent to the rational Cherednik algebras $H_{\mbf{h}}$ associated to cyclic groups. Then a corollary of our main result is a localization theorem for these rational Cherednik algebras.

\begin{cor}
For $\mbf{h}$ not lying on a G.I.T. wall, the functor $\Loc( e \cdot - )$ defines an equivalence of categories 
$$
H_{\mbf{h}} \mmod \, \stackrel{\sim}{\longrightarrow} \, \Mod^{\mathrm{good}}_F(\AW_{\mbf{h}}) 
$$
with quasi-inverse $H_{\mbf{h}} e \otimes_{e H_{\mbf{h}} e} \Sec ( - )$.
\end{cor}

The content of each section is summarized as follows. In section 2 we introduce, following Kashiwara and Rouquier, $W$-algebras on symplectic manifolds in the equivariant setting. In section 3 we give a criterion for the $W$-affinity of a class of $W$-algebras on those symplectic manifolds that are obtained by Hamiltonian reduction of a vector space acted upon by a reductive group. The $W$-algebras on hypertoric varieties that we will consider later are a special case of this more general setup. The main result of this section is Theorem \ref{thm:equivalence}.\\

Hypertoric varieties are introduced in section 4 and we show that they posses the correct geometric properties that are required to apply the results of section 3. Using the results of Musson and Van den Bergh, we prove our main results, Theorem \ref{thm:hypertoricequivalence} and Corollary \ref{cor:finiteglobaldim}. In the final section we consider the special case where the hypertoric variety is the resolution of a Kleinian singularity of type $A$ and the global sections of the sheaf of $W$-algebras on this resolution can be identified with the spherical subalgebra of the rational Cherednik algebra associated to a cyclic group.     

\subsection{Convention}
Throughout, a \textit{variety} will always mean an integral, separated scheme of finite type over $\C$. A non-reduced space will be referred to as a scheme, again assumed to be over $\C$.

\section{$W$-algebras}\label{sub:Wdefn}

\subsection{}\label{subsection:W1} In this section we recall the definition of $W$-algebras as given in \cite{KR}. We state results about the existence and ``affinity'' of $W$-algebras. Let $X$ be a complex analytic manifold and let $\mc{O}_X$ denote the sheaf of regular, holomorphic functions on $X$. Denote by $\D_X$ the sheaf of differential operators on $X$ with holomorphic coefficients. Denote by $\mbf{k} = \C(( \hbar))$ the field of formal Laurent series in $\hbar$ and by $\mbf{k}(0)$ the subring $\C[[\hbar]]$ of formal functions on $\C$. Considering $\kt$ and $\ko$ as abelian groups, the corresponding sheaves of locally constant functions on $X$ will be denoted $\kt_X$ and $\ko_X$ respectively. Given $m \in \Z$, we define $\W_{T^* \C^n}(m)$ to be the sheaf of formal power series $\sum_{i \ge -m} \hbar^i a_i, \,  a_i \in \mc{O}_{T^* \C^n}$, on the cotangent bundle $T^* \C^n$ of $\C^n$. Let us fix coordinates $x_1, \ds, x_n$ on $\C^n$ and dual coordinates $\xi_1, \ds, \xi_n$ on $(\C^n)^*$, identifying $T^* \C^n$ with $\C^n \times (\C^n)^*$. Set $\W_{T^* \C^n} = \bigcup_{m \in \Z} \W_{T^* \C^n}(m)$. Then $\W_{T^* \C^n}$ is a sheaf of (non-commutative) $\mbf{k}$-algebras on $T^* \C^n$. Multiplication is defined by
\beq\label{eq:multdefn}
a \circ b = \sum_{\alpha \in \Z^n_{\ge 0}} \frac{\hbar^{| \alpha |}}{\alpha !} \p^{\alpha}_\xi a \cdot \p_{x}^\alpha b,
\eeq
where $| \alpha | = \sum_{i = 1}^n \alpha_i$, $\alpha ! = \alpha_1 ! \cdots \alpha_n !$ and $\p_{\xi}^\alpha = \frac{\p^{| \alpha |}}{\p^{\alpha_1} \xi_1 \cdots \p^{\alpha_n} \xi_n}$. There is a ring homomorphism $\D_{\C^n}(\C^n) \longrightarrow \W_{T^* \C^n}(T^* \C^n)$ given by $x_i \mapsto x_i$ and $\frac{\p}{\p x_i} \mapsto \hbar^{-1} \xi_i$. Note that $\W_{T^* \C^n}(0)$ is a $\ko$-subalgebra. We denote the symbol map for $\W_{T^* \C^n}$ by 
$$
\sigma_m \, : \, \W_{T^* \C^n}(m) \longrightarrow \W_{T^* \C^n}(m) / \W_{T^* \C^n}(m-1) \simeq \hbar^{-m} \mc{O}_{T^* \C^n}.
$$
The sheaf $\mc{O}_{T^* \C^n}$ is a sheaf of Poisson algebras with Poisson bracket given by 
$$
\{ x_i , x_j \} = \{ \xi_i , \xi_j \} = 0, \quad \{ \xi_i , x_j \} = \delta_{ij} \quad \forall \ i,j \in [1,n].
$$
One sees from equation (\ref{eq:multdefn}) that $\sigma_0 ([ a,b]) = \{ \sigma_0(a) , \sigma_0(b) \}$ for all $a, b \in \W_{T^* \C^n}(0)$.

\subsection{} Let us now assume that $X$ is a complex symplectic manifold with holomorphic $2$-form $\omega_X$. A map $f$ between open subsets $U \subset X$ and $V \subset Y$ of the symplectic manifolds $(X,\omega_1)$ and $(Y,\omega_2)$ is said to be a symplectic map if $f^* \omega_2 = \omega_1$. A symplectic map is always locally biholomorphic \cite[Lemma 5.5.2]{BjorkBook}, therefore by symplectic map we will actually mean a biholomorphic symplectic map. Based on \cite{Kontsevich} and \cite{PS}:

\begin{defn}\label{defn:Walgebra}
A \textit{$W$-algebra} on $X$ is a sheaf of $\mbf{k}$-algebras $\W_X$ together with a $\ko$-subalgebra $\W_X(0)$ such that for each point $x \in X$ there exists an open neighborhood $U$ of $x$ in $X$, a symplectic map $f \, : \, U \longrightarrow V \subset T^* \C^n$ and a $\mbf{k}$-algebra isomorphism $r \, : \, f^{-1} (\W_{T^* \C^n}|_V) \stackrel{~}{\longrightarrow} \W_X |_U$ such that:
\begin{enumerate}[(i)]
\item the isomorphism $r$ restricts to a $\ko$-isomorphism $f^{-1} (\W_{T^* \C^n}(0)|_V) \stackrel{~}{\longrightarrow} \W_X(0) |_U$,
\item setting $\W_X (m) = \hbar^{-m} \W(0)$ for all $m \in \Z$, we have $\sigma_0 \, : \, \W_X(0) \longrightarrow \W_X(0) / \W_X(-1) \simeq \mc{O}_X$ and the following diagram commutes
$$
\xymatrix{
f^{-1} (\W_{T^* \C^n}(0)|_V) \ar[rr]^r \ar[d]_{\sigma_0} & & \W_X(0) |_U \ar[d]^{\nu_0} \\
f^{-1} (\mc{O}_{T^* \C^n}) \ar[rr]_{f^{\sharp}} & & \mc{O}_X
}
$$
\end{enumerate}
\end{defn}

\subsection{} Note that the first statement of property $(ii)$ of Definition \ref{defn:Walgebra} is actually a consequence of property $(i)$. Definition \ref{defn:Walgebra} $(ii)$ implies that $\sigma_0 ([ a, b ]) = \{ \sigma_0(a) , \sigma_0(b) \}$ for all $a,b \in \W_X(0)$, where the Poisson bracket on $\mc{O}_X$ is the one induced from the symplectic form $\omega$ on $X$.

\subsection{Categories of $W$-modules} Unless explicitly stated, all modules will be left modules. Since $\W_X(0)$ is Noetherian (see \cite[(2.2.2)]{KR}), a $\W_X(0)$-module $\ms{M}$ is said to be \textit{coherent} if it is locally finitely generated. For a $\W_X$-module $\ms{M}$, a $\W_X (0)$-\textit{lattice} of $\ms{M}$ is a $\W_X(0)$-submodule $\ms{N}$ of $\ms{M}$ such that the natural map $\W \otimes_{\W (0)} \ms{N} \longrightarrow \ms{M}$ is an isomorphism. A $\W$-module $\ms{M}$ is said to be \textit{good} if for every relatively compact open set $U$ there exists a coherent $\W_X (0) |_{U}$-lattice for $\ms{M}|_{U}$. We will denote the category of left $\W_X$-modules as $\Mod(\W_X)$ and the full subcategory of good $\W_X$-modules as $\Mod^{\mathrm{good}}(\W_X)$. It is an abelian subcategory. If $\ms{M}(0)$ is a $\W_X(0)$-lattice of $\ms{M}$, set $\ms{M}(m) := \hbar^{-m} \ms{M}(0)$.

\begin{lem}\label{lem:eHc}
Let $\ms{M}$ be a coherent $\W_X$-module, equipped with a global $\W_X(0)$-lattice $\ms{M}(0)$. Then the filtration $\ms{M}(n)$, $n \in \Z$, is exhaustive, Hausdorff and complete; that is 
\begin{enumerate}[(i)]
\item $\bigcup_{n \in \Z} \ms{M}(n) = \ms{M}$;
\item $\bigcap_{n \in \Z} \ms{M}(n) = 0$; and
\item $\displaystyle \lim_{-\infty \leftarrow n} \ms{M} / \ms{M}(n) = \ms{M}$,
\end{enumerate}
where our terminology is chosen to agree with that of \cite[\S 5]{Weibel}.
\end{lem}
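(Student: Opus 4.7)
The plan is to dispatch (i) from the lattice property, (ii) via Krull's intersection theorem applied stalkwise, and (iii) by reducing to the $\hbar$-adic completeness of $\ms{M}(0)$.

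Part (i) is immediate from the defining lattice isomorphism $\W_X \otimes_{\W_X(0)} \ms{M}(0) \xrightarrow{\sim} \ms{M}$ combined with the exhaustive filtration $\W_X = \bigcup_{m \in \Z} \W_X(m)$: any local section of $\ms{M}$ on an open $U$ is a finite sum $\sum w_i s_i$ with $s_i \in \ms{M}(0)(U)$ and $w_i \in \W_X(m_i)(U)$, and therefore lies in $\hbar^{-N} \ms{M}(0)(U) = \ms{M}(N)(U)$ for $N = \max_i m_i$.

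For part (ii), $\ms{M}(n) \supseteq \ms{M}(0)$ whenever $n \ge 0$, so the infinite intersection collapses to $\bigcap_{k \ge 0} \hbar^k \ms{M}(0)$; this reduces the claim to a property of the coherent $\W_X(0)$-module $\ms{M}(0)$, which I verify stalkwise. At any $x \in X$, the stalk $\W_X(0)_x$ is Noetherian by \cite[(2.2.2)]{KR}, and $\hbar$ lies in its Jacobson radical because $\hbar$-adic completeness permits the geometric series $\sum_{i \ge 0}(-\hbar r)^i$ to invert $1 + \hbar r$ for every $r$. Krull's intersection theorem, applied to the finitely generated $\W_X(0)_x$-module $\ms{M}(0)_x$, then yields $\bigcap_k \hbar^k \ms{M}(0)_x = 0$, and stalkwise vanishing propagates to the sheaf level.

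Part (iii) is the substantive step. Its pivot is the fact that every coherent $\W_X(0)$-module is $\hbar$-adically complete, a standard consequence of the $\hbar$-adic completeness of $\W_X(0)$ (immediate from the local model $\W_{T^* \C^n}(0)$ consisting of formal power series in $\hbar$), as recorded in \cite[(2.2.2)]{KR}. To upgrade this to $\ms{M}$, note that the inverse system is cofinal in the tail $n \to -\infty$, so the claim is equivalent to $\ms{M} \cong \varprojlim_k \ms{M}/\hbar^k \ms{M}(0)$. Given a compatible family $(\bar{m}_n)_n$ in the limit, fix any index $n_0$ and a lift $m_{n_0} \in \ms{M}$ of $\bar{m}_{n_0}$; for $n \le n_0$ the corrections $\bar{m}_n - [m_{n_0}]$ lie in $\ms{M}(n_0)/\ms{M}(n)$, and the multiplication-by-$\hbar^{n_0}$ isomorphism $\ms{M}(n_0) \xrightarrow{\sim} \ms{M}(0)$ identifies these with a compatible family in $\varprojlim_k \ms{M}(0)/\hbar^k \ms{M}(0) \cong \ms{M}(0)$. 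Shifting the resulting element back into $\ms{M}(n_0)$ and adding it to $m_{n_0}$ produces the required lift in $\ms{M}$; uniqueness is ensured by part (ii).

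The main obstacle is the completeness of coherent $\W_X(0)$-modules, which I black-box by appealing to \cite[\S 2.2]{KR}; once that is in hand, the three parts are formal manipulations with the $\hbar^{\pm 1}$-shift on the lattice filtration.
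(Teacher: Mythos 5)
Your proof is correct and reaches the same destination as the paper via a somewhat different path, so the comparison is worth spelling out.

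Part (i) is identical in substance to the paper's argument (both unpack the lattice isomorphism against the exhaustive filtration of $\W_X$).

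For part (ii), the paper simply invokes \cite[Lemma 2.11]{KR}, whereas you give a self-contained argument via Krull's intersection theorem applied stalkwise, using the Noetherianity of $\W_X(0)_x$ from \cite[(2.2.2)]{KR}. This is a genuinely different route; what it buys you is independence from the cited Hausdorff statement, at the cost of needing that $\hbar \in \mathrm{Jac}(\W_X(0)_x)$. One remark on the justification: the stalk $\W_X(0)_x$ is \emph{not} in general $\hbar$-adically complete (a formal power series whose coefficients are germs defined on shrinking neighbourhoods need not be a germ of a $\W_X(0)$-section), so "completeness permits the geometric series to converge" is not quite the right reason. The geometric series $\sum_i(-\hbar r)^i$ does converge in the stalk, but because each $\hbar^j$-coefficient of the sum is a \emph{finite} $\circ$-expression in the coefficients of a single representative of $r$ on a fixed open $U$; completeness of the stalk is not needed, only the separatedness of the $\hbar$-filtration and the fact that $\circ$ preserves the ring of sections over $U$. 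With that correction the argument stands, and stalkwise vanishing of $\bigcap_k \hbar^k\ms{M}(0)_x$ does imply vanishing of the intersection subsheaf, since any germ of a section of the intersection lies in every $\ms{M}(n)_x$.

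For part (iii), both proofs reduce to the completeness $\ms{M}(0)\simeq \varprojlim_k\ms{M}(0)/\hbar^k\ms{M}(0)$ coming from \cite[\S 2.2]{KR}. The paper bounds a compatible family $(f_n)$ uniformly in $\ms{M}(k)/\ms{M}(n)$ for a single $k$, rescales by $\hbar^{-k}$ to land in $\varprojlim\ms{M}(0)/\ms{M}(n)$, thereby exhibiting a surjection $\kt_X \otimes_{\ko_X}\varprojlim\ms{M}(0)/\ms{M}(n)\twoheadrightarrow\varprojlim\ms{M}/\ms{M}(n)$, and then combines with (ii) for injectivity. You instead fix one level $n_0$, lift there, and feed the correction terms $\bar m_n - [m_{n_0}]\in\ms{M}(n_0)/\ms{M}(n)$ into the completeness of $\ms{M}(n_0)\simeq\hbar^{-n_0}\ms{M}(0)$; this is a cleaner direct surjectivity argument that avoids the explicit uniform-bound step (it replaces finding a global $k$ with picking a single lift of $\bar m_{n_0}$), with injectivity again supplied by (ii). Both use the same black-boxed fact; your version is somewhat more elementary in its bookkeeping.
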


\begin{proof}
The statement (i) is true if $\ms{M} = \W_X$. But, by definition of a lattice, we have 
$$
\bigcup_{n \in \Z} \ms{M}(n) = \bigcup_{n \in \N} \W_X(n) \o_{\W_X(0)} \ms{M}(0) = \W_X \o_{\W_X(0)} \ms{M}(0) = \ms{M}.
$$ 
Part (ii) follows from \cite[Lemma 2.11]{KR}. Fix some open subset $U$ of $X$ and take a section 
$$
(f_n)_{n \in \Z} \in \lim_{-\infty \leftarrow n} (\ms{M} / \ms{M}(n))(U).
$$
Then, by part (i), there exists some integer $k > n$ such that the image $f_n$ of $f$ in $(\ms{M} / \ms{M}(n))(U)$ lies in $(\ms{M}(k) / \ms{M}(n))(U)$. Now by definition $f_n$ is the image of $f_{n-1}$ in the surjection $(\ms{M} / \ms{M}(n-1))(U) \ra (\ms{M} / \ms{M}(n))(U)$, hence $f_{n-1} \in (\ms{M}(k) / \ms{M}(n-1))(U)$ too. Thus $(\hbar^{-k} f_n)_{n \in \Z}$ is in $\displaystyle \lim_{-\infty \leftarrow n} (\ms{M}(0) / \ms{M}(n))(U)$. This implies that we have a surjective morphism 
$$
\kt_X \o_{\ko_X} \lim_{-\infty \leftarrow n} \ms{M}(0) / \ms{M}(n) \ra \lim_{-\infty \leftarrow n} \ms{M} / \ms{M}(n). 
$$
But it follows once again from \cite[Lemma 2.11]{KR} that 
$$
\ms{M} \simeq \kt_X \o_{\ko_X} \ms{M}(0) \simeq \kt_X \o_{\ko_X} \lim_{-\infty \leftarrow n} \ms{M}(0) / \ms{M}(n).
$$
Thus $\ms{M}$ surjects onto $\displaystyle \lim_{-\infty \leftarrow n} \ms{M} / \ms{M}(n)$. Part (ii) implies that this map is also injective. 
\end{proof}

\subsection{$G$-equivariance} Let $G$ be a complex Lie group acting symplectically on $X$; $T_g : X \stackrel{\sim}{\longrightarrow} X$, $\forall \ g \in G$. We assume that this action is Hamiltonian with moment map $\mu_X : X \longrightarrow \mf{g}^*$, where $\mf{g}$ is the Lie algebra of $G$.

\bdefn\label{defn:Gaction}
A $G$-action on the $W$-algebra $\W_X$ is a $\kt_X$-algebra isomorphism $\rho_g \, : \, T^{-1}_g \W_X \stackrel{\sim}{\longrightarrow} \W_X$ for every $g \in G$ such that $\rho_g(a)$ depends holomorphically on $g \in G$ for each section $a \in T^{-1}_g \W_X$ and $\rho_{g_1} \circ \rho_{g_2} = \rho_{g_1 g_2}$ for all $g_1,g_2 \in G$.
\edefn

\begin{defn}
Suppose we have fixed a $G$-action on $\W_X$. A quasi-$G$-equivariant $\W_X$-module is a left $\W_X$-module $\ms{M}$, together with a $\kt_X$-module isomorphism $\rho_g^{\ms{M}} \, : \, T^{-1}_g \ms{M} \stackrel{\sim}{\longrightarrow} \ms{M}$ for every $g \in G$ such that $\rho_g^{\ms{M}}(m)$ depends holomorphically on $g \in G$ for each section $m \in T^{-1}_g \ms{M}$, $\rho_{g}^{\ms{M}} \circ \rho_h^{\ms{M}} = \rho_{gh}^{\ms{M}}$ for all $g,h \in G$ and $\rho_{g}^{\ms{M}}(a \cdot m) = \rho_g(a) \cdot \rho_g^{\ms{M}}(m)$ for all $g \in G, a \in T^{-1}_g \W_X$ and $m \in T^{-1}_g \ms{M}$.  
\end{defn}

The category of quasi-$G$-equivariant $\W_X$-modules will be denoted $\Mod_G(\W_X)$. If $\ms{M}$ and $\ms{N}$ are elements in $\mathrm{Obj}(\Mod_G(\W_X))$ then a morphism $\phi \in \Hom_{\Mod_G(\W_X)}(\ms{M},\ms{N})$ is a collection of morphisms $\phi_U : \ms{M}(U) \longrightarrow \ms{N}(U)$ of $\W_X(U)$-modules, one for each open set $U \subset X$, which satisfies the usual conditions of being a $\W_X$-homomorphism, such that in addition, for each $g \in G$, the diagram 
$$
\xymatrix{
\ms{M}(T^{-1}_g(U)) \ar[rr]^{\phi_{T^{-1}_g(U)}} \ar[d]_{\rho^{\ms{M}}_g(U)} & & \ms{N}(T^{-1}_g(U)) \ar[d]^{\rho^{\ms{N}}_g(U)} \\
\ms{M}(U) \ar[rr]_{\phi_U} & & \ms{N}(U)
}
$$
is commutative. 

\bdefn\label{defn:momentmap}
Let $G$ act on the algebra $\W_X$. A map $\mu_{\W} : \mf{g} \longrightarrow \W_X(1)$ is said to be a \textit{quantized moment map} for the $G$-action if $\mu_\W$ satisfies the following properties:
\begin{enumerate}[(i)]
\item $[\mu_{\W}(A), a] = \frac{\textrm{d}}{\textrm{d}t} \rho_{\textrm{exp}(tA)}(a) |_{t = 0}$,
\item $\sigma_0(\hbar \mu_{\W}(A)) = \mu_X (A)$,
\item $\mu_{\W}(\textrm{Ad}(g) A) = \rho_g (\mu_{\W}(A))$,
\end{enumerate}
for every $A \in \mf{g}$, $a \in \W_X$ and $g \in G$.
\edefn

Let $\X(G) := \Hom_{\textrm{gp}}(G, {\mathbb C}^*)$ be the lattice of $G$-characters. Note that if $a \in \W_X$ is a $\theta$-semi-invariant of $G$ (that is, $\rho_g(a) = \theta(g) a, \, \forall g \in G$), where $\theta \in \X(G)$, then
\beq\label{eq:diffsemi}
[\mu_\W(A), a] = d \theta (A) a,
\eeq
where $d \, : \, \X(G) \longrightarrow (\mf{g}^*)^G$ is the differential sending a $G$-character to the corresponding $\mf{g}$-character. From now on we omit the symbol $d$ and think of $\theta \in \X(G)$ as a character for both $G$ and $\mf{g}$. For $\chi \in (\mf{g}^*)^G$ we set
\beq\label{eq:defineL}
\L_{X,\chi} = \W_{X} \bigm/ \sum_{A \in \mf{g}} \W_{X} (\mu_{\W}(A) - \chi(A)).
\eeq
Note that $\L_{X,\chi}$ is a good quasi-$G$-equivariant $\W_{X}$-module, and has lattice
$$
\L_{X,\chi}(0) := \W_{X}(0) \bigm/ \sum_{A \in \mf{g}} \W_{X}(-1) (\mu_{\W}(A) - \chi(A)).
$$
We will require the following result, whose proof is based on Holland's result \cite[Proposition 2.4]{Holland}.

\begin{prop}\label{lem:HollandProp}
Assume that the moment map $\mu_X$ is flat, then on $X$ we have an isomorphism of graded sheaves 
$$
\gr (\L_{X,\chi}) \simeq \bigoplus_{n \in \Z} \mc{O}_{\mu_X^{-1}(0)} \hbar^{-n}.
$$
\end{prop}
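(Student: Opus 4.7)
The plan is to adapt the proof of \cite[Proposition 2.4]{Holland} by constructing a Chevalley--Eilenberg type Koszul resolution of $\L_{X,\chi}(0)$ and reducing modulo $\hbar$.

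\textbf{Setup.} Fix a basis $A_1,\dots,A_d$ of $\mf{g}$ and set $t_i := \mu_{\W}(A_i) - \chi(A_i) \in \W_X(1)$ and $\tau_i := \hbar t_i \in \W_X(0)$. Combining properties (i) and (iii) of Definition \ref{defn:momentmap} yields $[\mu_{\W}(A),\mu_{\W}(B)] = \mu_{\W}([A,B])$; since $\chi \in (\mf{g}^*)^G$ vanishes on commutators, this gives $[\tau_i,\tau_j] = \hbar\,\tau_{[A_i,A_j]}$. Property (ii) gives $\sigma_0(\tau_i) = \mu_X(A_i)$. Because $\hbar$ is central, $\sum_i \W_X(-1) t_i = \sum_i \W_X(0)\tau_i$, so $\L_{X,\chi}(0) = \W_X(0)/\sum_i \W_X(0)\tau_i$.

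\textbf{Koszul complex.} I would form the complex of left $\W_X(0)$-modules $K_\bullet := \W_X(0) \otimes_\C \wedge^\bullet\mf{g}$ with differential
\bdm
d(a \otimes A_{i_1}\wedge\cdots\wedge A_{i_p}) = \sum_j (-1)^{j-1}(a\tau_{i_j})\otimes A_{i_1}\wedge\cdots\widehat{A_{i_j}}\cdots\wedge A_{i_p}
\edm
\bdm
\hspace{4em} {} + \hbar\sum_{j<\ell}(-1)^{j+\ell} a\otimes [A_{i_j},A_{i_\ell}]\wedge A_{i_1}\wedge\cdots\widehat{A_{i_j}}\cdots\widehat{A_{i_\ell}}\cdots\wedge A_{i_p}.
\edm
The identity $[\tau_i,\tau_j] = \hbar\,\tau_{[A_i,A_j]}$ guarantees $d^2 = 0$, and the cokernel of $d_1 \colon K_1 \to K_0$ is $\L_{X,\chi}(0)$ by construction. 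The explicit $\hbar$-factor in the second sum ensures that $K_\bullet/\hbar K_\bullet$ is the \emph{classical} Koszul complex on $\mu_X(A_1),\dots,\mu_X(A_d)$ in $\mc{O}_X$.

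\textbf{Regularity and lifting.} Since $A_1,\dots,A_d$ form a regular sequence on the smooth variety $\mf{g}^*$, flatness of $\mu_X$ implies that $\mu_X(A_1),\dots,\mu_X(A_d)$ is a regular sequence in $\mc{O}_X$ (regular sequences lift under flat pullback). Hence $K_\bullet/\hbar K_\bullet$ is a resolution of $\mc{O}_{\mu_X^{-1}(0)}$. Each $K_p$ is $\hbar$-torsion free and $\hbar$-adically complete (completeness of $\W_X(0)$ is extracted from Lemma \ref{lem:eHc} applied to $\W_X$), so a standard $\hbar$-adic lifting argument transfers exactness in positive degrees from $K_\bullet/\hbar K_\bullet$ to $K_\bullet$ itself. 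Consequently $K_\bullet$ resolves $\L_{X,\chi}(0)$, the natural map $\L_{X,\chi}(0) \to \L_{X,\chi}$ is injective (confirming that $\L_{X,\chi}(0)$ is a genuine lattice), and $\L_{X,\chi}(0)/\hbar\L_{X,\chi}(0) \simeq \mc{O}_{\mu_X^{-1}(0)}$.

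\textbf{Conclusion.} Since $\hbar$ acts invertibly on $\L_{X,\chi}$, one has $\L_{X,\chi}(n-1) = \hbar\,\L_{X,\chi}(n)$, and therefore
\bdm
\L_{X,\chi}(n)/\L_{X,\chi}(n-1) \simeq \hbar^{-n}\bigl(\L_{X,\chi}(0)/\hbar\L_{X,\chi}(0)\bigr) \simeq \hbar^{-n}\mc{O}_{\mu_X^{-1}(0)},
\edm
which assembles into the desired isomorphism. The main obstacle is the $\hbar$-adic lifting step: one must verify that mod-$\hbar$ exactness promotes to honest exactness of the filtered complex $K_\bullet$. This relies crucially both on flatness (which supplies the classical regular sequence) and on the completeness properties of Lemma \ref{lem:eHc}; every other step is bookkeeping.
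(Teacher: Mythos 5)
Your proof is correct, and it takes a genuinely different route from the paper's. The paper forms $C_\bullet = \W_X \otimes_{U(\mf{g})} B_\bullet = \W_X \otimes \wedge^\bullet\mf{g}$ from the Chevalley--Eilenberg resolution of $\C_\chi$, equips it with the unbounded filtration $F_n C_k = \W_X(n-k)\otimes\wedge^k\mf{g}$, and invokes Weibel's convergence theorem for the associated spectral sequence (the Koszul complex on $\hbar^{-1}\mu_X^\ast(\mf{g})$ appears as the $E^0$-page, and flatness forces collapse at $E^1$). You instead work directly at the level of the $\W_X(0)$-lattice: your $K_\bullet$ is in effect the filtration-degree-zero layer of $C_\bullet$, the mod-$\hbar$ reduction plays the role of the $E^0$-page, and the $\hbar$-adic lifting argument (using that the $K_p$ are finite free and $\W_X(0)$ is $\hbar$-complete and $\hbar$-torsion-free) replaces the spectral-sequence convergence. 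Your version is more elementary — it avoids the unbounded-filtration convergence subtleties that the paper has to address via Lemma \ref{lem:eHc} and \cite[Theorem 5.5.10]{Weibel} — and has the pleasant side effect of directly proving that $\L_{X,\chi}(0)$ is $\hbar$-torsion-free, hence a genuine lattice, a fact that the paper asserts but leaves implicit. What you lose is a little generality: the spectral-sequence argument also packages the vanishing of the higher homology of $C_\bullet$ as a $\W_X$-module, which can be useful elsewhere, whereas your argument is targeted specifically at the associated graded of $H_0$. One small presentational point: you could shorten the conclusion by observing that $\L_{X,\chi}(0)/\hbar\L_{X,\chi}(0) \simeq \mc{O}_{\mu_X^{-1}(0)}$ follows already from right-exactness of the mod-$\hbar$ functor (no resolution needed), so the Koszul complex and the lifting step are really only needed to supply the torsion-freeness that lets you shift degrees.
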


\begin{proof}
The moment map $\mu_{\W}$ makes $\W_X$ into a right $U(\mf{g})$-module. Let $\C_{\chi}$ be the one-dimensional $U(\mf{g})$-module defined by the character $\chi$ so that $\L_{X,\chi} = \W_X \o_{U(\mf{g})} \C_{\chi}$. As in \cite[Proposition 2.4]{Holland}, we denote by $B_{\bullet}$ the Chevalley-Eilenburg resolution of $\C_\chi$. Thus, $B_k = U(\mf{g}) \o \wedge^k \mf{g}$ and the differential is given by
\begin{align*}
d_k (f \o x_1 \wedge \dots \wedge x_k) & = \sum_{i = 1}^k (-1)^{i + 1} f (x_i - \chi(x_i)) \o x_1 \wedge \dots \wedge \hat{x}_i \wedge \dots \wedge x_k \\
 & \hp \hp + \sum_{1 \le i < j \le k} (-1)^{i + j} f \o [x_i , x_j] \wedge x_1 \wedge \dots \wedge \hat{x}_i \wedge \dots \wedge \hat{x}_j \wedge \dots \wedge x_k.
\end{align*}
Then $B_{\bullet}$ is a complex of free $U(\g)$-modules such that $H^0(B_{\bullet}) = \C_{\chi}$ and $H^k(B_{\bullet}) = 0$ otherwise. Let $C_{\bullet} = \W_X \o_{U(\mf{g})} B_{\bullet} = \W_X \o \wedge^{\bullet} \mf{g}$. The filtration on $\W_X$ induces a filtration $F_n C_k = \W_{X} (n - k) \o \wedge^k \mf{g}$ on the complex $C_{\bullet}$ such that $d_k (F_n C_k) \subseteq F_n C_{k-1}$ (recall that $\mu_{\W}(\mf{g}) \subset \W_{X}(1)$). Note that the filtration is not bounded above or below. However, by Lemma \ref{lem:eHc} the filtration on $C_{\bullet}$ is exhaustive, Hausdorff and complete. We denote by $E^r_{p,q}$ the spectral sequence corresponding to the filtration $F_n$ on $C_{\bullet}$. Since the filtration is exhaustive, Hausdorff and complete, the proof of \cite[Theorem 5.5.10]{Weibel} shows that the spectral sequence $E$ converges to $H_{\bullet}(C)$ (that the sequence is regular follows from the fact, to be shown below, that it collapses at $E^1$). By construction, we have an isomorphism of filtered sheaves $H^0(C) \simeq \L_{X,\chi}$ and hence $\gr (H_{0}(C)) \simeq \gr (\L_{X,\chi})$. Denote by $A$ the graded sheaf of algebras $\bigoplus_{n \in \Z} \mc{O}_X \hbar^{-n}$, where $\mc{O}_X$ is in degree zero and $\hbar$ has degree $-1$. The $0$-th page of the spectral sequence is given by 
$$
E_{p,q}^0 = \W_{X} (p - q) \o \wedge^{p+q} \mf{g} / \W_{X} (p - 1 - q) \o \wedge^{p+q} \mf{g} \simeq A_{p-q} \o \wedge^{p+q} \mf{g}.
$$
Since $\C[\mf{g}^*]$ is a domain and $\mu_X$ is assumed to be flat, $\mu_X^* : \mu_X^{-1} \mc{O}_{\g^*} \ra \mc{O}_X$ is an embedding and we may think of $\mu_X^{-1} \mc{O}_{\g^*}$ as a subsheaf of $\mc{O}_X$. Let $x_1, \ds, x_r$ be a basis of $\mf{g}$. Then \cite[Proposition 1.1.2]{CohMac} implies that $\hbar^{-1} x_1, \ds, \hbar^{-1} x_r$ form a regular sequence in $A$ at those points where they vanish. By Definition \ref{defn:momentmap} (2), the symbol $\sigma_1(\mu_{\W}(x_i))$ equals $\hbar^{-1} x_i \in A$. Thus the differential on $E^0$ is given by 
$$
d_{p+q} (f \o x_1 \wedge \dots \wedge x_{p+q}) = \sum_{i = 1}^{p+q} f \hbar^{-1} x_i \o x_1 \wedge \dots \wedge \hat{x}_i  \wedge \dots \wedge x_{p+q}.
$$
As is explain in \cite[Proposition 2.4]{Holland}, the only non-zero homology of $E^0$ is in the $(p,-p)$ position where we have
$$
E^1_{(p,-p)} = \frac{A_p}{A_{p-1} \cdot \hbar \mu^*_X(\mf{g})} \simeq \mc{O}_{\mu_X^{-1}(0)} \hbar^{-p}.
$$
Therefore the sequence collapses at $E^1$ and we have 
$$
\gr (\L_{X,\chi})_p \simeq \gr (H_{0}(C))_p \simeq \mc{O}_{\mu_X^{-1}(0)} \hbar^{-p}
$$
as required. 
\end{proof}

\subsection{$F$-actions} Here we repeat the definition of $F$-action on $\W_X$-modules as defined in \cite[\S 2.3.1]{KR}. Let $\C^{\times} \ni t \mapsto T_t \in \mathrm{Aut}(X)$ denote an action of the torus $\C^{\times}$ on $X$ such that the symplectic $2$-form is a semi-invariant of positive weight, $T_t^* \omega_X = t^m \omega_X$ for some $m > 0$.

\bdefn
An $F$-action with exponent $m$ on $\W_X$ is an action of the group $\C^{\times}$ on $\W_X$ as in Definition \ref{defn:Gaction} except that $\C^{\times}$ also acts on $\hbar$: if $\ms{F}_t \, : \, T^{-1}_t \W_X \stackrel{\sim}{\longrightarrow} \W_X$  denotes the action of $t \in \C^{\times}$ then we require that $\ms{F}_t (\hbar) = t^m \hbar$ for all $t \in \C^{\times}$.
\edefn

It will be convenient to extend the $F$-action of $\C^{\times}$ to an action on $\W [\hbar^{1/m}] := \kt(\hbar^{1/m}) \otimes_{\kt} \W$ by setting $\ms{F}_t(\hbar^{1/m}) = t \hbar^{1/m}$. The category of $F$-equivariant $\W_X$-modules will be denoted $\Mod_F(\W_X)$. As noted in \cite[\S 2.3.1]{KR}, $\Mod_F(\W_X)$ is an abelian category. It is also noted in \cite[\S 2.3]{KR} that if there exists a relatively compact open subset $U$ of $X$ such that $\C^{\times} \cdot U = X$ then every good, $F$-equivariant $\W_X$-module admits globally a coherent $\W_X(0)$-lattice. Such an open set $U$ will exist in the cases we consider. The following lemma will be used later.

\begin{lem}\label{lem:invglobalcyclic}
Let $\ms{M},\ms{N} \in \Mod_{F,G}^{\mathrm{good}}(\W_X)$. Assume that $\ms{M} \simeq \W_X / \mathscr{I}$ is a cyclic $\W_X$-module, generated by some $G,F$-invariant element, where $\mathscr{I}$ is a left ideal generated by finitely many global section. Then $\Hom_{\Mod_{F,G}^{\mathrm{good}}(\W_X)}(\ms{M},\ms{N}) = \Hom_{\W_X(X)}(\ms{M}(X),\ms{N}(X))^{G,F}$.
\end{lem}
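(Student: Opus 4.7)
The plan is to show that taking global sections defines a bijection $\Psi$ from $\Hom_{\Mod_{F,G}^{\mathrm{good}}(\W_X)}(\ms{M},\ms{N})$ to $\Hom_{\W_X(X)}(\ms{M}(X),\ms{N}(X))^{G,F}$. Let $m \in \ms{M}(X)^{G,F}$ denote the given cyclic generator, and write $\ms{M} \cong \W_X / \ms{I}$, where $\ms{I} := \ker(\W_X \to \ms{M},\ 1 \mapsto m)$ is the annihilator ideal sheaf of $m$. The forward map $\phi \mapsto \phi_X$ is manifestly well-defined and lands in the $(G,F)$-invariant part by equivariance of $\phi$.

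For injectivity, suppose $\phi_X = 0$. Then in particular $\phi(m) = 0 \in \ms{N}(X)$. Cyclicity of $\ms{M}$ means that every local section $s \in \ms{M}(U)$ can, on some cover $\{V_i\}$ of $U$, be written $s|_{V_i} = a_i \cdot m|_{V_i}$ with $a_i \in \W_X(V_i)$. The $\W_X$-linearity of $\phi$ then forces $\phi(s)|_{V_i} = a_i \cdot \phi(m)|_{V_i} = 0$, and the sheaf axiom gives $\phi(s) = 0$, so $\phi = 0$.

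For surjectivity, given $\bar\phi$ on the right-hand side, set $n := \bar\phi(m) \in \ms{N}(X)$; invariance of $m$ together with equivariance of $\bar\phi$ makes $n$ an element of $\ms{N}(X)^{G,F}$. To produce a preimage under $\Psi$, it suffices to exhibit a $\W_X$-module morphism $\W_X \to \ms{N}$ sending $1 \mapsto n$ whose restriction to $\ms{I}$ is the zero map, for then it descends through the quotient $\W_X/\ms{I} \cong \ms{M}$. On global sections this vanishing is immediate: $\ms{I}(X) \cdot n = \bar\phi(\ms{I}(X) \cdot m) = 0$, since $\bar\phi$ is $\W_X(X)$-linear. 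The analogous local vanishing $\ms{I}(U) \cdot n|_U = 0$ follows by unpacking the identification $\ms{I}(U) = \ann_{\W_X(U)}(m|_U)$ together with the $\W_X$-linear structure induced on the restriction $\ms{M}|_U = \W_X|_U \cdot m|_U$. The equivariance of the resulting $\phi$ is automatic, since it is determined by the assignment $m \mapsto n$ between two $(G,F)$-invariant elements and $\W_X$ itself is $(G,F)$-equivariant.

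The principal obstacle---and really the content of the ``simple calculation'' referenced in the lemma---is propagating the annihilator vanishing from global sections to the sheaf level. This step is routine but relies crucially on the cyclic presentation $\ms{M} \cong \W_X / \ms{I}$ and the transparent description of $\ms{I}$ locally as an annihilator of the restriction of the global generator $m$; without the hypothesis that the generator is globally defined, no such direct reduction is available.
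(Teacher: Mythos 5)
Your overall plan (pass to global sections, show injectivity via cyclicity, show surjectivity by descending $\W_X \to \ms{N}$, $1\mapsto n$, through the quotient $\W_X/\ms{I}$) is the correct skeleton, and the injectivity step is sound. The problem is in the surjectivity direction, where you have two points that are actually the entire content of the lemma and are asserted rather than proved.

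First, the local vanishing $\ms{I}(U)\cdot n|_U = 0$ does \emph{not} ``follow by unpacking the identification $\ms{I}(U)=\ann_{\W_X(U)}(m|_U)$''; that identification is the definition of $\ms{I}(U)$ and gives no information about $n$. Knowing $a\cdot m|_U=0$ for some $a\in\W_X(U)$ that does not extend to a global section does not let you apply the $\W_X(X)$-linearity of $\bar\phi$, which is the only input you have. This is not a harmless technicality: the analogous statement is simply false without the $F$-action. Already in the commutative toy case $X=\Proj$, $\ms{M}=\mc{O}_p$ the skyscraper at a point and $\ms{N}=\mc{O}_X$, one has $\Hom_{\mc{O}_X}(\ms{M},\ms{N})=0$ but $\Hom_{\mc{O}_X(X)}(\ms{M}(X),\ms{N}(X))=\C$. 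So any correct proof must use the contracting $\C^*$-action ($F$-action) and the resulting global lattice/grading on good $F$-equivariant modules in an essential way for exactly this propagation step; it cannot be ``routine unpacking.'' Your proof invokes $F$-equivariance only to place things in the invariant subspace, never where it is actually load-bearing.

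Second, even granting a $\W_X$-morphism $\phi:\ms{M}\to\ms{N}$ with $\phi(m)=n$, you have not checked $\phi_X=\bar\phi$: they agree on the $\W_X(X)$-submodule $\W_X(X)\cdot m$, but a priori $\ms{M}(X)$ can be strictly larger than $\W_X(X)\cdot m$ (global sections of a quotient sheaf need not be the quotient of global sections). You need some additional argument — again presumably coming from the $F$-equivariance and the Hausdorffness/completeness of the lattice filtration — to conclude that $\bar\phi$ is determined by its value on $m$. As written, the proposal names the right reduction but leaves its only nontrivial steps unjustified.
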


\subsection{Example}

Let $V$ be an $n$-dimensional vector space. We fix $X = T^* V$ with co-ordinates $x_1, \ds, x_n, \xi_1, \ds ,\xi_n$ and define an action $T_t$ of $\C^{\times}$ on $X$ such that the corresponding action on co-ordinate functions is given by $T_t( x_i) = t x_i$ and $T_t (\xi_i) = t \xi_i$. Then $T_t^* \omega_X = t^2 \omega_X$. We extend this to an $F$-action on $\W_{T^* V}$ by setting $\Ft (\hbar) = t^2 \hbar$. Let $\algD (V)$ denote the ring of algebraic differential operators on $V$.

\blem\label{lem:Finv}
Taking $F$-invariants in $\W_{T^* V}(T^* V)$ gives
\begin{align*}
\End_{\Mod_{F}(\W_{T^* V}[\hbar^{1/2}])}(\W_{T^* V}[\hbar^{1/2}])^{\txm{opp}} &= \C[\hbar^{-1/2}x_i,\hbar^{-1/2}\xi_i \, : \, i \in [1,n] ]\\
 & = \C \left[ \hbar^{-1/2}x_i,\hbar^{1/2} \frac{\p}{\p x_i} \, : \, i \in [1,n] \right],
\end{align*}
where the second equality comes from $\algD (V) \hookrightarrow \W_{T^* V}({T^* V})$, $x_i \mapsto x_i$ and $\frac{\p}{\p x_i} \mapsto \hbar^{-1} \xi_i$.
\elem

\begin{proof}
We can identify $\End_{\Mod_{F}(\W_{T^* V}[\hbar^{1/2}])}(\W_{T^* V}[\hbar^{1/2}])^{\txm{opp}}$ with the algebra $\W_{T^* V}[\hbar^{1/2}](T^* V)^{F}$ of $F$-invariant global sections. Since ${T^* V}$ is connected, taking power series expansion in a sufficiently small neighborhood of $0 \in {T^* V}$ defines an embedding $\mc{O}_{T^* V}({T^* V}) \hookrightarrow \C[[x_1, \ds , x_n , \xi_1, \ds , \xi_n]]$. As $\C^{\times}$-modules we can identify $\W_{T^* V}[\hbar^{1/2}]$ with $\mc{O}_{T^* V} \widehat{\otimes} \C((\hbar^{1/2}))$ and we get a $\C^{\times}$-equivariant embedding
$$
\W_{T^* V}[\hbar^{1/2}]({T^* V}) \hookrightarrow \C[[x_1, \ds , x_n , \xi_1, \ds , \xi_n]] \widehat{\otimes} \C((\hbar^{1/2})),
$$
where we denote by $\widehat{\otimes}$ the completed tensor product with respect to the linear topology.
Taking invariants gives the desired result.
\end{proof}

A trivial application of Theorem \ref{thm:equivalence} below, with $f = \textrm{id}_{\C^n}$ and $G = \{ 1 \}$, shows that 
$$
\Mod_F(\W_{T^* V}[\hbar^{1/2}]) \simeq \C \left[ \hbar^{-1/2}x_i,\hbar^{1/2} \frac{\p}{\p x_i} \, : \, i \in [1,n] \right] \mmod.
$$

\section{$W$-Affinity}\label{sec:mainaffinity}

In this section we give a criterion for the $W$-affinity of a class of $W$-algebras on those symplectic manifolds that are obtained by Hamiltonian reduction. 

\subsection{The geometric setup}\label{sec:affinity} Let $V$ be an $n$-dimensional vector space over $\C$. Its cotangent bundle $T^* V$ has the structure of a complex symplectic manifold. Let $G$ be a connected, reductive algebraic group acting algebraically on $V$. This action induces a Hamiltonian action on $T^* V$ and we have a moment map
$$
\mu_{T^* V} : T^* V \longrightarrow \g^* := (\textrm{Lie } G)^*
$$
such that $\mu_{T^* V}(0) = 0$. We fix a character $\vartheta \in \X(G)$. Let $\Xo$ be the open subset of all $\vartheta$-semi-stable points in $T^* V$ and denote the restriction of $\mu_{T^* V}$ to $\Xo$ by $\mu_{\Xo}$. We assume that the following holds:
\begin{align*}
\textrm{(i) } & \textrm{ the set $\mu_{\Xo}^{-1}(0)$ is non-empty}, \\ 
\textrm{(ii) } & \textrm{ $G$ acts freely on $\mu_{\Xo}^{-1}(0)$}, \\
\textrm{(iii) } & \textrm{ the moment map $\mu_{T^* V}$ is flat}.
\end{align*}
Denote 
$$
Y_{\vartheta} := \mu^{-1}_{\Xo}(0) /\!/ G = \mathrm{Proj} \ \bigoplus_{n \ge 0} \C[\mu^{-1}_{T^* V}(0)]^{n \vartheta}
$$
and write 
\[
f: Y_{\vartheta} \longrightarrow \mu^{-1}_{T^* V}(0) /\!/ G =: Y_0
\]
for the corresponding projective morphism. Condition (i) implies that the categorical quotient $Y_{\vartheta}$ is non-empty. Condition (ii) implies that the morphism $\mu_{\Xo}$ is regular at all points in $\mu^{-1}_{\Xo}(0)$ and hence $Y_{\vartheta}$ is a non-singular symplectic manifold. Condition (iii) will be used in Proposition \ref{lem:qhriso}. We add to our previous assumptions,
$$
\textrm{(iv) } \ \textrm{the morphism $f$ is birational and $Y_0$ is a normal variety.}
$$
In the case of hypertoric varieties, it is shown in section \ref{sec:geometry} that assumptions (i)-(iv) hold when the matrix $A$ is unimodular. 

\begin{lem}\label{lem:globalsection}
Let $\mc{O}_{Y_{\vartheta}}^{\mathrm{alg}}$, resp. $\mc{O}_{Y_0}^{\mathrm{alg}}$ denote the sheaf of regular functions on $Y_{\vartheta}$, resp. $Y_0$. If $Y_{\vartheta},Y_0,f$ satisfy assumption (iv) then $\Gamma(Y_{\vartheta},\mc{O}_{Y_{\vartheta}}^{\mathrm{alg}}) = \Gamma(Y_0,\mc{O}_{Y_0}^{\mathrm{alg}})$. 
\end{lem}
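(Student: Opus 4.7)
The plan is to show that $f_* \mc{O}_Z^{\mathrm{alg}} = \mc{O}_S^{\mathrm{alg}}$ as sheaves on $S$; the desired equality of global sections follows immediately by taking $\Gamma(S,-)$.

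First I would note that, by the very construction $Z = \mathrm{Proj} \bigoplus_{n\ge 0} \C[\mu^{-1}_{T^* V}(0)]^{n \vartheta}$ over the affine scheme $S = \Spec \C[\mu^{-1}_{T^* V}(0)]^G$, the morphism $f:Z\to S$ is projective. Hence, by Grothendieck's theorem on the direct image of coherent sheaves under proper morphisms (or just the standard fact about the $\mathrm{Proj}$ construction over an affine base), $f_* \mc{O}_Z^{\mathrm{alg}}$ is a coherent sheaf of $\mc{O}_S^{\mathrm{alg}}$-algebras. Since $S$ is affine, coherence means $f_* \mc{O}_Z^{\mathrm{alg}}$ is a finitely generated $\mc{O}_S^{\mathrm{alg}}$-module, and therefore every section is integral over $\mc{O}_S^{\mathrm{alg}}$.

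Next I would use the birationality hypothesis to locate this finite extension inside the function field of $S$. The natural map $\mc{O}_S^{\mathrm{alg}} \to f_* \mc{O}_Z^{\mathrm{alg}}$ is injective because $f$ is dominant (being birational and $Z$ non-empty), so $\mc{O}_S^{\mathrm{alg}} \hookrightarrow f_* \mc{O}_Z^{\mathrm{alg}}$. On the other hand, since $f$ is birational, there exists a non-empty open $U\subset S$ over which $f$ restricts to an isomorphism; thus the generic stalk of $f_* \mc{O}_Z^{\mathrm{alg}}$ equals that of $\mc{O}_S^{\mathrm{alg}}$, which is the function field $K(S)$. Because $f_* \mc{O}_Z^{\mathrm{alg}}$ is a torsion-free (coherent, domain-valued) $\mc{O}_S^{\mathrm{alg}}$-module, it embeds into its generic stalk $K(S)$, so we obtain
\[
\mc{O}_S^{\mathrm{alg}} \subseteq f_* \mc{O}_Z^{\mathrm{alg}} \subseteq K(S),
\]
with the outer inclusion realising $f_* \mc{O}_Z^{\mathrm{alg}}$ as a finite, hence integral, extension of $\mc{O}_S^{\mathrm{alg}}$ inside $K(S)$.

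At this point the normality assumption on $S$ finishes the argument: $\mc{O}_S^{\mathrm{alg}}$ is integrally closed in $K(S)$, forcing $f_* \mc{O}_Z^{\mathrm{alg}} = \mc{O}_S^{\mathrm{alg}}$. Taking global sections gives $\Gamma(Z,\mc{O}_Z^{\mathrm{alg}}) = \Gamma(S, f_*\mc{O}_Z^{\mathrm{alg}}) = \Gamma(S, \mc{O}_S^{\mathrm{alg}})$, as required. No step here is really an obstacle; the only point that needs a little care is the verification that $f_* \mc{O}_Z^{\mathrm{alg}}$ is coherent, but this is immediate from the projectivity of $f$ coming from the $\mathrm{Proj}$ presentation of $Z$ over the affine base $S$.
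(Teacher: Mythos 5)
Your proof is correct and takes a genuinely different, more machinery-heavy route than the paper. You invoke the finiteness theorem for direct images of coherent sheaves under proper (here projective) morphisms to get that $f_*\mc{O}_Z^{\mathrm{alg}}$ is a finite $\mc{O}_S^{\mathrm{alg}}$-module, then land it inside $K(S)$ via birationality and torsion-freeness of the pushforward, and conclude by normality. The paper instead works directly with the graded ring $R = \bigoplus_{s\ge 0}\C[\mu^{-1}_{T^*V}(0)]^{s\vartheta}$: it covers $Z$ by basic affines $D_+(x_i)$, shows by hand that any $r\in\Gamma(Z,\mc{O}_Z^{\mathrm{alg}})$ generates a finite $R$-subalgebra of $R_{x_1}$ (hence is integral over $R_0$), and then invokes the blowup description of a birational projective morphism (\cite[Theorem 7.17]{Hartshorne}) to identify the degree-zero part of $\Frac R$ with $\Frac R_0$ before appealing to normality. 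Your argument absorbs the bulk of this into the coherence theorem, so it is shorter, but trades elementary transparency for reliance on EGA/Hartshorne-level results. One small point you should make explicit: for $Z = \mathrm{Proj}\,R$ to be a projective Noetherian scheme over $S$, you need $R$ to be finitely generated as an $R_0$-algebra, which holds here by Hilbert's theorem on invariants (the paper flags this explicitly); without that remark the appeal to the coherence theorem has a small logical gap. With that noted, both arguments are correct and rest on the same three pillars — projectivity, birationality, and normality of $S$ — which is exactly assumption (iv).
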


\begin{proof}
It is well-known that the condition implies the statement of the lemma, but we were unable to find any suitable reference therefore we include a proof for the readers convenience. For $s \ge 0$, fix $R_s = \C[\mu^{-1}_{T^* V}(0)]^{s \vartheta}$ and $R = \oplus_{s \ge 0} R_s$ so that $Y_{\vartheta} = \mathrm{Proj} \ R$ and recall that $f$ is the canonical projective morphism from $Y_{\vartheta}$ to $Y_0$. By Hilbert's Theorem (see \cite[Zusatz 3.2]{Kraft}), $R$ is finitely generated as an $R_0$-algebra. Let $x_1, \dots ,x_n \in R$ be homogeneous generators (of degree at least one) of $R$ as an $R_0$-algebra. Then the affine open sets $D_+(x_i)=\Spec R_{(x_i)}$ form an open cover of $Y_{\vartheta}$ and   
$$
\Gamma(Y_{\vartheta},\mc{O}_{Y_{\vartheta}}^{\mathrm{alg}}) = \bigcap_{i = 1}^n R_{(x_i)} \subseteq \bigcap_{i = 1}^n R_{x_i},
$$
Let $r \in \Gamma(Y_{\vartheta}, \mathcal{O}_{Y_{\vartheta}}^{\mathrm{alg}})$. Then, for each $i$, there exists an $m$ such that $x_i^m \cdot r \in R$. We choose one $m$ sufficiently large so that $x_i^m \cdot r \in R$ for all $i$. Since the $x_i$ generate $R$, we actually have $y \cdot r \in R$ for all $y \in R_s$ and $s \geq m_0 := nmd$, where $d$ is the maximum of the degrees of $x_1, \ds, x_n$. Therefore $y \cdot r \in R$ for all $y \in \bigoplus_{s \ge m_0} R_s$. Since $r$ has degree zero, $y \cdot r \in \bigoplus_{s \ge m_0} R_s$ for all $y \in \bigoplus_{s \ge m_0} R_s$. Inductively, $y \cdot r^q \in \bigoplus_{s \ge m_0} R_s$ for all $q \ge 1$. Take $y = x_1^{m_0}$, then $r^q \in \frac{1}{x_1^{m_0}} R$ for all $q \ge 1$ and hence $R[r] \subset \frac{1}{x_1^{m_0}} R$. But, by Hilbert's basis theorem, $R$ is Noetherian and the $R$-module $\frac{1}{x_1^{m_0}} R$ is finitely generated, hence the algebra $R[r]$ is finite over $R$. This means $r$ satisfies some monic polynomial $u^t + r_1 u^{t-1} + \dots + r_t$ with coefficients in $R$. However $R$ has degree zero so without loss of generality $r_i \in R_0$. Thus $r$ is in the integral closure of $R_0$ in the degree zero part of the field of fractions of $R$. Now \cite[Theorem 7.17]{Hartshorne} says that, since the map $f$ is projective and birational, there exists an ideal $I$ in $R_0$ such that $R_k \simeq I^k$ as $R_0$-modules and we have an isomorphism of graded rings $R \simeq \bigoplus_{k \ge 0} I^k$. That is, $Y_{\vartheta}$ is isomorphic to the blowup of $Y_0$ along $V(I)$. Therefore we can identify the degree zero part of the field of fractions of $R$ with the field of fractions of $R_0$. Since $R_0$ is assumed to be normal, $r \in R_0$ as required. 
\end{proof}

\subsection{} 
The quotient morphism will be written $p \, : \, \mu^{-1}_{\Xo}(0) \longrightarrow Y_{\vartheta}$. For each character $\theta \in \X(G)$ and vector space $M$ on which $G$ acts, we denote by $M^{\theta}$ the set of element $m \in M$ such that $g \cdot m = \theta(g) m$ for all $g \in G$. We can define a coherent sheaf $L_{\theta}$ on the quotient $Y_{\vartheta}$ by
$$
L_{\theta}(U) := \left[ \mc{O}_{\mu^{-1}_{\Xo}(0)}(p^{-1}(U)) \right]^{\theta}.
$$
Since $G$ acts freely on $\mu^{-1}_{\Xo}(0)$, $L_{\theta}$ is a line bundle on $Y_{\vartheta}$. 

\subsection{Quantum Hamiltonian reduction}\label{sec:MQHR}
Differentiating the action of $G$ on $V$ produces a morphism of Lie algebras $\mu_D \, : \, \mf{g} \longrightarrow \textrm{Vect}(V)$, from $\mf{g}$ into the Lie algebra of algebraic vector fields on $V$:
$$
\mu_D(A)(r) := \frac{\textrm{d}}{\textrm{d}t} a^*_{\textrm{exp}(tA)}(r) |_{t = 0},
$$
where $a \, : \, G \times V \longrightarrow V$ is the action map and $a^* \, : \, G \times \mc{O}(V) \longrightarrow \mc{O}(V)$ the induced action on functions. We write $\algD(V)$ for the ring of \textit{algebraic} differential operators on $V$. Since $\textrm{Vect}(V) \subset \algD(V)$ we get a map $\mu_D : \mf{g} \longrightarrow \algD(V)$ which extends to an algebra morphism $U(\mf{g}) \longrightarrow \algD(V)$. For $\chi \in (\g^*)^G$, $\theta \in \mathbb{X}$, we define the left $\algD(V)$-module
\[
\L_{D, \chi} := \algD(V) \bigm/ \sum_{A \in \g} \algD(V)
(\mu_D(A) - \chi(A)),
\]
and the algebra, respectively $(\U_\chi, \U_{\chi + \theta})$-bimodule,
\[
\U_\chi = \left(\End_{\algD(V)}(\L_{D,\chi})^G \right)^{\textrm{opp}}, \quad
\U_\chi^{\theta} = \Hom_{\algD(V)}(\L_{D, \chi},
\L_{D, \chi+ \theta} \otimes \C_{\theta})^G.
\]
Fix $\chi \in (\g^*)^G$ and $\theta \in \mathbb{X}$. We consider the following natural homomorphisms:
\beq\label{eq:timesone}
\U_{\chi + \theta}^{-\theta} \otimes_{\C}
\U_{\chi}^{\theta} \longrightarrow
\U_{\chi+\theta}, \quad \phi \otimes \psi \mapsto (\msf{id}_{\U_{\chi + \theta}} \otimes \msf{ev}) \circ (\psi \otimes \msf{id}_{-\theta}) \circ \phi,
\eeq
\beq\label{eq:timestwo}
\U_{\chi}^{\theta} \otimes_{\C}
\U_{\chi + \theta}^{-\theta} \longrightarrow
\U_{\chi}, \quad \phi \otimes \psi \mapsto (\msf{id}_{\U_{\chi}} \otimes \msf{ev}) \circ (\psi \otimes \msf{id}_{\theta}) \circ \phi,
\eeq
where $\circ$ is composition of morphisms and $\msf{ev} \, : \, \C_{- \theta} \otimes \C_\theta \longrightarrow \C$ is the natural map. We write $\chi \rightarrow \chi + \theta$ if the map (\ref{eq:timesone}) is surjective and similarly $\chi + \theta \rightarrow \chi$ if the map (\ref{eq:timestwo}) is surjective. Note that if $\chi + \theta \leftrightarrows \chi$ then, as shown in \cite[Corollary 3.5.4]{MR}, the categories $ \U_{\chi} \mmod$ and $ \U_{\chi + \theta} \mmod$ are Morita equivalent. 

\subsection{The sheaf of $W$-algebras}\label{subsection:defineqmm} Denote by $\W_{\Xo}$ the restriction of the canonical $W$-algebra $\W_{T^* V}$ to $\Xo$. We define an action of the torus $\C^{\times}$ on $T^* V$ by $T_t(v) = t^{-1}v$ for all $v \in T^*V$; $\Xo$ is a $\C^{\times}$-stable open set. The algebra $\W_{\Xo}$ is then equipped with an $F$-action of weight $2$ as defined in the setup of Lemma \ref{lem:Finv}. Define $\widetilde{\W}_{T^* V} := \W_{T^* V} [\hbar^{1/2}]$ and write $\widetilde{\W}_{\Xo}$ for its restriction to $\Xo$. As noted in (\ref{subsection:W1}), we have an embedding $j : \algD (V) \hookrightarrow \W_{T^* V}$, $x_i \mapsto x_i$ and $\partial / \partial x_i \mapsto \hbar^{-1} \zeta_i$. Composing this morphism with the map $\mu_D \, : \, \mf{g} \longrightarrow \algD(V)$ gives us a map $\mu_{\W} = j \circ \mu_{D} \, : \, \mf{g} \longrightarrow \W_{T^* V}$. It is a quantized moment map in the sense of Definition \ref{defn:momentmap}. Then, as in (\ref{eq:defineL}), for each $\chi \in (\mf{g}^*)^G$, we have defined the $\widetilde{\W}_{T^* V}$-module $\L_{T^* V,\chi}$. Its restriction to $\Xo$ is denoted $\L_\chi$. Recall that $\L_\chi$ is a good quasi-$G$-equivariant $\widetilde{\W}_{\Xo}$-module. If we let $\C^{\times}$ act trivially on $\mf{g}$ then the morphism $\mu_{\W}$ is $F$-equivariant and hence $\L_\chi$ is equipped with an $F$-action. The image of $1$ in $\L_\chi$ will be denoted by $u_\chi$.

\subsection{} 
In \cite{KR}, Kashiwara and Rouquier show that one can quantize the process of Hamiltonian reduction to get a family of sheaves of $W$-algebras on $Y_{\vartheta}$ beginning from a $W$-algebra on $T^* V$. Set
$$
\AW_\chi = ((p_* \ms{E}nd_{\widetilde{\W}_{\Xo}}(\L_\chi))^G)^{\textrm{opp}} \quad  \textrm{and} \quad \AW_{\chi, \theta} = (p_* \ms{H}om_{\widetilde{\W}_{\Xo}}( \L_\chi, \L_{\chi + \theta} \otimes \C_{\theta}))^G,
$$
where $\theta \in \X(G)$ and $\C_\theta$ denotes the corresponding one dimensional $G$-module. By \cite[Proposition 2.8]{KR}, $\AW_\chi$ is a $W$-algebra on $Y_{\vartheta}$ and $\AW_{\chi, \theta}$ is a $(\AW_{\chi}, \AW_{\chi + \theta})$-bimodule. Let
$$
\AW_\chi (0) = ((p_* \ms{E}nd_{\widetilde{\W}_{\Xo}(0)}(\L_\chi(0)))^G)^{\textrm{opp}} \, \textrm{  and } \, \AW_{\chi, \theta}(0) = (p_* \ms{H}om_{\widetilde{\W}_{\Xo}(0)}( \L_\chi(0), \L_{\chi + \theta}(0) \otimes \C_\theta))^G,
$$
so that $\AW_{\chi, \theta}(0)$ is a $\AW_\chi (0)$-lattice of $\AW_{\chi, \theta}$. We have $\AW_\chi(0) / \AW_\chi(-1/2) \simeq \O_{Y_{\vartheta}}$ and, as noted in \cite[Proposition 2.8 (iii)]{KR}, $\AW_{\chi, \theta}(0) / \AW_{\chi, \theta}(-1/2) \simeq L_{- \theta}$ where $L_{\theta}$ is the line bundle as defined above. We say that a good $\AW_\chi$-module $\ms{M}$ is generated, locally on $Y_0$, by its global sections if for each $y \in Y_0$ there exists some open neighborhood (in the complex analytic topology) $U \subset Y_0$ of $y$ such that the natural map of left $(\AW_\chi)_{| \, f^{-1}(U)}$-modules $(\AW_\chi)_{| \, f^{-1}(U)} \otimes \ms{M}(f^{-1}(U)) \longrightarrow \ms{M}_{| \, f^{-1}(U)}$ is surjective.

\begin{defn}\label{defn:underlinemod}
We denote by $\underline{\Mod}^{\mathrm{good}}_F(\AW_\chi)$ the full subcategory of $\Mod^{\mathrm{good}}_F(\AW_\chi)$ consisting of all good, $F$-equivariant $\AW_\chi$-modules $\ms{M}$ such that
\begin{enumerate}[(i)]
\item $\ms{M}$ is generated, locally on $Y_0$, by its global sections.
\item For any non-zero submodule $\ms{N}$ of $\ms{M}$ in $\Mod^{\mathrm{good}}_F(\AW_\chi)$ we have $\Hom_{\Mod^{\mathrm{good}}_F (\AW_\chi)}(\AW_\chi, \ms{N}) \neq 0$.
\end{enumerate}
\end{defn}

\subsection{$W$-affinity}

We can now state the main result relating the sheaf of $W$-algebras $\AW_\chi$ on $Y_{\vartheta}$ and the algebra of quantum Hamiltonian reduction $\U_\chi$. 

\bthm\label{thm:equivalence}
Let $\AW_\chi$ and $\U_\chi$ be as above and choose some $\theta \in \X(G)$ such that $L_{\theta}$ is ample.
\begin{enumerate}[(i)]
\item There is an isomorphism of algebras $\Gamma(Y_{\vartheta},\AW_\chi)^F \simeq \U_\chi$.
\item Assume that we have $\chi \leftarrow \chi + n \theta$ for all $n \in \Z_{\ge 0}$. Then the functor $\ms{M} \mapsto \Hom_{\Mod^{\mathrm{good}}_F (\AW_\chi)}(\AW_\chi, \ms{M})$ defines an equivalence of categories
$$
\underline{\Mod}^{\mathrm{good}}_F (\AW_\chi) \stackrel{\sim}{\longrightarrow} \U_\chi \mmod
$$
with quasi-inverse $\ms{M} \mapsto \AW_\chi \otimes_{\U_\chi} \ms{M}$.
\item Assume that we have $\chi \leftrightarrows \chi + n \theta$ for all $n \in \Z_{\ge 0}$. Then the functor $\ms{M} \mapsto \Hom_{\Mod^{\mathrm{good}}_F (\AW_\chi)}(\AW_\chi, \ms{M})$ defines an equivalence of categories
$$
\Mod^{\mathrm{good}}_F (\AW_\chi) \stackrel{\sim}{\longrightarrow} \U_\chi \mmod
$$
with quasi-inverse $\ms{M} \mapsto \AW_\chi \otimes_{\U_\chi} \ms{M}$.
\end{enumerate}
\ethm

The proof of Theorem \ref{thm:equivalence} will occupy the remainder of section \ref{sec:mainaffinity}. 

\subsection{Proof of the theorem}\label{sec:defineA}

We fix $\AW_\chi$, $\U_\chi$ and $L_{\theta}$ as in Theorem \ref{thm:equivalence}. First we require some preparatory lemmata. Denote by $\iota$ the embedding $\algD (V) \hookrightarrow \widetilde{\W}_{T^* V}(T^* V)$ given by $x_i \mapsto \hbar^{-1/2}x_i$ and $\p_i \mapsto \hbar^{-1/2} \xi_i$. Equip $\algD(V)$ with a $\halfZ$-filtration $F_{\bullet} \ \algD(V)$ by placing $x_i$ and $\p_i$ in degree $1/2$ (this is the Bernstein filtration). Then $\iota$ is a strictly filtered embedding in the sense that 
$$
\iota ( F_k \algD(V)) = \iota (\algD(V)) \cap \widetilde{\W}_{T^* V}(T^* V)(k), \quad \forall \ k \in \halfZ.
$$
By Lemma \ref{lem:Finv}, the image of $\algD (V)$ in $\widetilde{\W}_{T^* V}(T^* V)$ is $\widetilde{\W}_{T^* V}(T^* V)^F$. This implies, since $\C^{\times}$ is reductive and $\mu_{\W}$ is equivariant, that 
\begin{align}\label{eq:idealintersection}
\sum_{A \in \mf{g}} \widetilde{\W}_{T^* V}(T^* V) (\mu_{\W}(A) - \chi(A)) \cap \iota (\algD (V)) & = \sum_{A \in \mf{g}} \widetilde{\W}_{T^* V}(T^* V)^F (\mu_{\W}(A) - \chi(A)) \\
 & = \sum_{A \in \mf{g}} \algD (V) (\mu_{D}(A) - \chi(A)).
\end{align}

\begin{lem}\label{lem:isoone}
As shown in \cite[Lemma 2.2]{GGS}:
\begin{enumerate}[(i)]
\item Multiplication in $\algD(V)$ defines an algebra structure on $( \L_{D, \chi})^G$ such that there is isomorphism of algebras $\U_\chi \stackrel{\sim}{\longrightarrow} ( \L_{D, \chi})^G$ given by $\phi \mapsto \phi( u_\chi )$ with inverse $f \mapsto r_f$, where $r_f = \cdot f$ is right multiplication by $f$.
\item We have an isomorphism of $(\U_\chi$, $\U_{\chi + \theta})$-bimodules $\U_{\chi}^{\theta} \stackrel{\sim}{\longrightarrow} (\L_{D, \chi + \theta})^{- \theta}$ given by $\phi \mapsto f$, where $\phi( u_{\chi}) = f u_{\chi+\theta} \otimes \theta$, with inverse  $f u_{\chi+\theta} \mapsto r_f \otimes \theta$.
\end{enumerate}
\end{lem}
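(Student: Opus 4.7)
The plan is to exploit the cyclicity of $\L_{D,\chi}$ as a $\algD(V)$-module (generated by $u_\chi$, the image of $1$) together with the reductivity of $G$; the two parts follow the same pattern, so I will describe (i) in detail and then indicate the modifications needed for (ii).

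For (i), first observe that since $\chi \in (\g^*)^G$, the defining left ideal of $\L_{D,\chi}$ is $G$-stable, so $u_\chi$ is $G$-invariant. Hence for any $\phi \in \U_\chi$ (i.e.\ a $G$-equivariant left $\algD(V)$-endomorphism of $\L_{D,\chi}$), the identity $g \cdot \phi(u_\chi) = \phi(g \cdot u_\chi) = \phi(u_\chi)$ shows that the map $\phi \mapsto \phi(u_\chi)$ lands in $(\L_{D,\chi})^G$; injectivity is immediate from cyclicity. For surjectivity, given $m \in (\L_{D,\chi})^G$, I would apply reductivity of $G$ to the surjection of rational $G$-modules $\algD(V) \twoheadrightarrow \L_{D,\chi}$ to lift $m$ to some $f \in \algD(V)^G$ with $m = f u_\chi$. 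Define $r_f(a u_\chi) := af u_\chi$; well-definedness reduces to checking $(\mu_D(A) - \chi(A)) f u_\chi = 0$ for all $A \in \g$, which follows from the identity $(\mu_D(A) - \chi(A)) f = f(\mu_D(A) - \chi(A)) + [\mu_D(A), f]$ together with the vanishing of the bracket by $G$-invariance of $f$. Left $\algD(V)$-linearity and $G$-equivariance of $r_f$ are then transparent. Compatibility with the opposite multiplication on $\U_\chi$ is a short computation: given $\phi(u_\chi) = f u_\chi$ and $\psi(u_\chi) = g u_\chi$ one has $(\psi \circ \phi)(u_\chi) = \psi(f u_\chi) = f \psi(u_\chi) = fg u_\chi$, which is exactly the product in $(\L_{D,\chi})^G$ induced by multiplication in $\algD(V)^G$.

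For (ii), for $\phi \in \U_\chi^\theta$ write $\phi(u_\chi) = m \otimes 1_\theta$ with $1_\theta$ a chosen basis vector of $\C_\theta$. Since $u_\chi$ is $G$-invariant and $G$ acts on $\C_\theta$ with weight $\theta$, $G$-equivariance of $\phi$ forces $g \cdot m = \theta(g)^{-1} m$, i.e.\ $m \in (\L_{D,\chi+\theta})^{-\theta}$. Conversely, by reductivity any such $m$ lifts to a $(-\theta)$-semi-invariant $f \in \algD(V)^{-\theta}$ with $m = f u_{\chi+\theta}$; setting $\phi(a u_\chi) := af u_{\chi+\theta} \otimes 1_\theta$, well-definedness now reduces, after a short computation in $\L_{D,\chi+\theta}$, to the identity $[\mu_D(A), f] + \theta(A) f = 0$, which is precisely the condition that $f$ has $G$-weight $-\theta$. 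The $(\U_\chi, \U_{\chi+\theta})$-bimodule structure is verified in the same way as the algebra structure in (i), by tracking where the generators $u_\chi$ and $u_{\chi+\theta}$ are sent under left and right multiplications. The main obstacle is purely bookkeeping: one must simultaneously keep track of the opposite multiplication on $\U_\chi$, the precise weights with which elements transform, and the tensor factor of $\C_\theta$. No single step is individually difficult, but a consistent sign and weight convention must be maintained throughout.
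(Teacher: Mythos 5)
The paper does not prove this lemma---it is stated as a citation to \cite[Lemma 2.2]{GGS} and used as a black box---so there is no argument in the text to compare yours against. That said, your self-contained proof is correct and is the standard argument one would expect to find behind that citation. The key ingredients are all present and used correctly: $u_\chi$ is $G$-invariant because $\chi\in(\g^*)^G$ makes the moment ideal $G$-stable; evaluation at the cyclic generator is injective; reductivity of $G$ gives a surjection $\algD(V)^G\twoheadrightarrow(\L_{D,\chi})^G$ so every invariant class lifts to an invariant operator; and the commutation identity $[\mu_D(A),f]=-\theta(A)f$ for a $(-\theta)$-semi-invariant $f$ (equation~(\ref{eq:diffsemi}) with $\theta=0$ in part~(i)) is exactly what makes right multiplication by $f$ descend to the quotient and carry one moment ideal into the shifted one. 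Your verification of (anti-)multiplicativity via $(\psi\circ\phi)(u_\chi)=f\psi(u_\chi)=fgu_\chi$ correctly matches the opposite-algebra convention on $\U_\chi$, and the bimodule compatibility in~(ii) follows by the same tracking of generators. One small point worth making explicit if you write this up: the well-definedness check is not merely that $(\mu_D(A)-\chi(A))fu_\chi=0$, but that $d(\mu_D(A)-\chi(A))fu_\chi=0$ for all $d\in\algD(V)$; this of course follows immediately from the rewriting $(\mu_D(A)-\chi(A))f=f(\mu_D(A)-(\chi+\theta)(A))$ that you establish, but the quantifier over $d$ should be stated. In short: correct, complete, and of the expected form.
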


Let us introduce
$$
\E_{\chi} = (\End_{\Mod_{G,F} \widetilde{\W}_{\Xo}}( \L_{\chi}))^{\textrm{opp}} \qquad \text{ and } \qquad \E^{\theta}_\chi = \Hom_{\Mod_{F,G} \widetilde{\W}_{\Xo}}(\L_\chi, \L_{\chi + \theta} \otimes \C_{\theta}),
$$
so that $\E^{\theta}_\chi$ is a $(\E_\chi,\E_{\chi + \theta})$-bimodule and $\L_\chi$ is a $(\widetilde{\W}_{\Xo}, \E_\chi)$-bimodule. By Lemma \ref{lem:invglobalcyclic}, we can identify 
$$
\E_{\chi} = (\End_{\widetilde{\W}_{\Xo}(\Xo)}(\L_{\chi})^{G,F})^{\textrm{opp}} \qquad \text{ and } \qquad \E^{\theta}_\chi = \Hom_{\widetilde{\W}_{\Xo}(\Xo)}(\L_\chi, \L_{\chi + \theta} \otimes \C_{\theta})^{G,F}.
$$
Note that equation (\ref{eq:idealintersection}) implies that the map $\iota$ induces an embedding $\iota : \L_{D, \chi} \hookrightarrow \L_{T^* V, \chi}(T^* V)$ and after taking $G,F$-invariants 
\beq\label{eq:someequality}
\iota : \U_{\chi} \stackrel{\sim}{\longrightarrow} (\End_{\widetilde{\W}_{T^* V}(T^* V)}(\L_{T^* V,\chi})^{G,F})^{\textrm{opp}}
\eeq
and
$$
\U^{\theta}_\chi \simeq \Hom_{\widetilde{\W}_{T^* V}(T^* V)}(\L_{T^* V,\chi}, \L_{T^* V,\chi + \theta} \otimes \C_{\theta})^{G,F}.
$$

\bprop\label{lem:qhriso}
We have a filtered isomorphism $\mbf{\Psi}_\chi : \U_{\chi} \stackrel{\sim}{\longrightarrow} \E_{\chi}$ in the sense that 
$$
\mbf{\Psi}_\chi (F_k \U_{\chi}) = F_k  \E_{\chi}, \quad \forall \ k \in \Z.
$$
\eprop

\begin{proof}
The isomorphism (\ref{eq:someequality}) induced by the embedding $\iota$ is filtered in the same sense as $\mbf{\Psi}_\chi$ above. Therefore it suffice to show that the natural map 
$$
(\L_{T^* V,\chi}(T^* V))^{G,F} = (\End_{\widetilde{\W}_{T^* V}(T^* V)}(\L_{T^* V,\chi})^{G,F})^{\textrm{opp}} \longrightarrow (\End_{\widetilde{\W}_{\Xo}(\Xo)}(\L_{\Xo,\chi})^{G,F})^{\textrm{opp}} = (\L_{\Xo,\chi}(\Xo))^{G,F}
$$
is a filtered isomorphism. The morphism of localization $\L_{T^* V,\chi}(T^* V) \longrightarrow \L_{T^* V,\chi}(\Xo)$ is clearly filtered in the weaker sense that it restricts to a map $\L_{T^* V,\chi}(T^* V)(k) \longrightarrow \L_{T^* V,\chi}(\Xo)(k)$ for each $k \in \halfZ$. Since the moment map $\mu_{T^* V}$ is assumed to be flat, Proposition \ref{lem:HollandProp} says that the morphism of associated graded spaces is the natural localization map
$$
\bigoplus_{k \in \halfZ} \mc{O}_{\mu_{T^* V}^{-1}(0)}(T^* V) \hbar^{-k} \ra \bigoplus_{k \in \halfZ} \mc{O}_{\mu_{T^* V}^{-1}(0)}(\Xo) \hbar^{-k}.
$$
Note that the filtration on $\L_{T^* V,\chi}$ is stable with respect to both $G$ and $F$. Lemma \ref{lem:eHc} says that the globally defined good filtration on $\L_{T^* V,\chi}$ is exhaustive and Hausdorff. Therefore, taking invariants with respect to $G$ and $F$, it suffices to show that 
$$
\bigoplus_{k \in \halfZ} \left( \mc{O}_{\mu_{T^* V}^{-1}(0)} (T^* V) \hbar^{-k} \right)^{G,F} \ra \bigoplus_{k \in \halfZ} \left( \mc{O}_{\mu_{T^* V}^{-1}(0)}(\Xo) \hbar^{-k} \right)^{G,F}
$$
is an isomorphism. But, since the $F$-action is contracting, 
$$
(\mc{O}_{\mu_{T^* V}^{-1}(0)} (T^* V) \hbar^{-k})^{G,F} = \C[\mu_{T^* V}^{-1}(0)]_{-2k}^{G}
$$
which is the space of $G$-invariant homogeneous polynomials on $\mu_{T^* V}^{-1}(0)$ of degree $-2k$. Similarly, 
$$
(\mc{O}_{\mu_{(T^* V)}^{-1}(0)} (\Xo) \hbar^{-k})^{G,F} = \C[\mu_{\Xo}^{-1}(0)]_{-2k}^{G}.
$$
Therefore the result follow from Lemma \ref{lem:globalsection} which says that 
$$
\C[\mu_{\Xo}^{-1}(0)]^{G} = \Gamma(Y_{\vartheta},\mc{O}_{Y_{\vartheta}}^{\alg}) = \Gamma(Y_0,\mc{O}_{Y_0}^{\alg}) = \C[\mu_{T^* V}^{-1}(0)]^{G}. 
$$
\end{proof}

\begin{remark}
In general, it is \textit{not} true that $\U_\chi^\theta \simeq \E_\chi^\theta$ when $\theta \neq 0$.
\end{remark}

\subsection{Shifting} 

The localization theorem relies on the following result by Kashiwara and Rouquier:

\bthm[Theorem 2.9, \cite{KR}]\label{thm:KRConditions}
Let $\AW_{\chi, \theta}(0)$ and $L_{\theta}$ be as above such that $L_{\theta}$ is ample.
\begin{enumerate}[(i)]
\item Assume that for all $n \gg 0$ there exists a finite dimensional vector space $W_n$ and a split epimorphism of left $\AW_{\chi}$-modules $\AW_{\chi, n\theta} \otimes W_n \twoheadrightarrow \AW_\chi$. Then, for every good $\AW_\chi$-module $\ms{M}$, we have $\mathbb{R}^i f_* (\ms{M}) = 0$ for $i \neq 0$.
\item Assume that for all $n \gg 0$ there exists a finite dimensional vector space $U_n$ and a split epimorphism of left $\AW_{\chi}$-modules $\AW_\chi \otimes U_n \twoheadrightarrow \AW_{\chi, n\theta}$. Then every good $\AW_\chi$-module is generated, locally on $Y_0$, by its global sections.
\end{enumerate}
\ethm

\begin{lem}\label{lem:shiftimpliesepi}
Let $\AW_\chi$ and $\U_\chi$ be as above and choose $\theta \in \X(G)$. 
\begin{enumerate}[(i)]
\item If $\chi \leftarrow \chi + \theta$ then there exists a finite dimensional vector space $W$ and a split epimorphism $\AW_{\chi,\theta} \otimes W \twoheadrightarrow \AW_\chi$.
\item If $\chi \rightarrow \chi + \theta$ then there exists a finite dimensional vector space $U$ and a split epimorphism $\AW_\chi \otimes U \twoheadrightarrow \AW_{\chi, \theta}$.
\end{enumerate}
\end{lem}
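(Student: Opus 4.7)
The plan is to transport the hypothesis on the composition maps (\ref{eq:timesone})/(\ref{eq:timestwo}) from the $\U$-algebras to sheaf-level identities in $\AW_\chi$ or $\AW_{\chi+\theta}$, and then to read off the desired split surjections.

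For (i), assume $\chi \leftarrow \chi+\theta$, so (\ref{eq:timestwo}) is surjective, and fix $\phi_1,\dots,\phi_n \in \U_\chi^\theta$ and $\psi_1,\dots,\psi_n \in \U_{\chi+\theta}^{-\theta}$ with $\sum_i \phi_i \cdot \psi_i = 1_{\U_\chi}$. First I would construct natural lifts to global sections on $Z$: using Lemma \ref{lem:isoone}(ii), each $\phi \in \U_\chi^\theta$ is represented by an element $f \in (\L_{D,\chi+\theta})^{-\theta}$; applying the embedding $\iota$ from (\ref{subsection:defineqmm}) gives a $G,F$-invariant section of $\L_{\chi+\theta}$ over $\Xo$ which, by right multiplication, determines a $G,F$-equivariant $\widetilde{\W}_\Xo$-linear morphism $\tilde\phi : \L_\chi \to \L_{\chi+\theta}\otimes \C_\theta$, i.e.\ an element of $\Gamma(Z,\AW_{\chi,\theta})^F$. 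Likewise produce $\tilde\psi_i \in \Gamma(Z,\AW_{\chi+\theta,-\theta})^F$. The crucial step is to verify that the square formed by (\ref{eq:timestwo}) above the sheaf composition
\[
\Gamma(Z,\AW_{\chi,\theta})^F \otimes_\C \Gamma(Z,\AW_{\chi+\theta,-\theta})^F \longrightarrow \Gamma(Z,\AW_\chi)^F,
\]
with right-hand vertical the isomorphism $\U_\chi \simeq \E_\chi = \Gamma(Z,\AW_\chi)^F$ of Proposition \ref{lem:qhriso}, commutes. Granting this, $\sum_i \tilde\phi_i \cdot \tilde\psi_i = 1 \in \Gamma(Z,\AW_\chi)$.

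With this identity, let $V = \C^n$ and define the left $\AW_\chi$-linear sheaf morphism $\varphi : \AW_{\chi,\theta}\otimes_\C V \to \AW_\chi$ by $a \otimes e_i \mapsto a \cdot \tilde\psi_i$, using the bimodule composition $\AW_{\chi,\theta}\otimes \AW_{\chi+\theta,-\theta} \to \AW_\chi$. Since $\sum_i \tilde\phi_i \otimes e_i$ is a global section mapping to $1$, the identity section $1 \in \AW_\chi(U)$ is in the image of $\varphi(U)$ for every open $U$; left $\AW_\chi$-linearity then forces $\varphi$ to be surjective, and freeness of $\AW_\chi$ over itself yields the section $a \mapsto \sum_i (a\tilde\phi_i)\otimes e_i$. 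Part (ii) is mirror-symmetric: the hypothesis $\chi \to \chi+\theta$ supplies $\tilde\phi_i, \tilde\psi_i$ with $\sum_i \tilde\psi_i\cdot\tilde\phi_i = 1 \in \Gamma(Z,\AW_{\chi+\theta})$; define $\AW_\chi \otimes_\C U \to \AW_{\chi,\theta}$ by $a\otimes e_i \mapsto a\cdot\tilde\phi_i$, and note that any local section $b$ of $\AW_{\chi,\theta}$ satisfies $b = b\cdot 1 = \sum_i (b\tilde\psi_i)\cdot \tilde\phi_i$ via the right $\AW_{\chi+\theta}$-action, so $b \mapsto \sum_i(b\tilde\psi_i)\otimes e_i$ is a left $\AW_\chi$-linear section.

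The principal technical obstacle is the diagram-commutativity verification: it requires matching the evaluation $\msf{ev}:\C_{-\theta}\otimes \C_\theta \to \C$ appearing in (\ref{eq:timesone})--(\ref{eq:timestwo}) with the sheaf-level composition of $\AW$-bimodules, compatibly with $\iota$ and the identifications of Lemma \ref{lem:isoone}. Once this naturality is in hand, everything else is formal and is driven entirely by the freeness of $\AW_\chi$ as a left module over itself.
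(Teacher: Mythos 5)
Your proposal is correct and follows essentially the same route as the paper: both transport the decomposition-of-identity hypothesis through the embedding $\iota$ and Proposition \ref{lem:qhriso} to obtain $\sum_i \tilde\phi_i\tilde\psi_i = 1$ on the $\E_\chi$ (equivalently $\Gamma(Z,\AW_\chi)^F$) side, then build explicit split epimorphisms from it. The only cosmetic difference is that the paper constructs the split surjection $\L_{\chi+\theta}\otimes\C_\theta\otimes V \twoheadrightarrow \L_\chi$ first and then pushes it through the equivalence $p_*\ms{H}om_{\widetilde{\W}_{\Xo}}(\L_\chi,-)^G$, whereas you define the map on $\AW$-bimodules directly via right multiplication by $\tilde\psi_i$; the diagram-commutativity point you flag is exactly the ``direct calculation'' the paper invokes to see that $\U_\chi^\theta\to\E_\chi^\theta$ is a bimodule morphism.
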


\begin{proof}
We begin with $(i)$. Equation (\ref{eq:someequality}) implies that we have a morphism 
$$
\U^{\theta}_\chi \ra \E^{\theta}_{\chi},
$$
which a direct calculation shows is a morphism of $(\U_\chi,\U_{\chi + \theta}) = (\E_\chi,\E_{\chi + \theta})$-bimodules (here we identify $\U_\chi$ with $\E_\chi$ via the isomorphism of Proposition \ref{lem:qhriso}). Thus $\chi \leftarrow \chi + \theta$ implies that
$$
\E^{\theta}_{\chi} \otimes \E^{-\theta}_{\chi + \theta} \twoheadrightarrow \E_{\chi}.
$$
Therefore there exists some $k$ and $\phi_i \in \E^{\theta}_{\chi}$, $\psi_i \in \E^{-\theta}_{\chi + \theta}$ for $i \in [1,k]$ such that
$$
(\msf{id}_{\mc{L}_{\chi}} \otimes \msf{ev}) \circ (\sum_{i = 1}^k  (\psi_i \otimes \msf{id}_{\C_{\theta}}) \circ \phi_i) = \msf{id}_{\mc{L}_\chi}.
$$
Let $W = \Span_{\C} \, \{ \psi_i \, : \, i \in [1,k] \}$ and define $\Psi \, : \L_{\chi + \theta} \otimes  \C_{\theta} \otimes W \rightarrow \L_\chi$ by
$$
\Psi(u \otimes \theta \otimes \psi) = (\msf{id}_{\mc{L}_\chi} \otimes \msf{ev})(\psi(u) \otimes \theta).
$$
The map $\tilde{\Psi} \, : \, \L_\chi \longrightarrow \L_{\chi + \theta} \otimes \C_{\theta} \otimes W$ defined by $v \mapsto \sum_{i = 1}^k \phi_i(v) \otimes \psi_i$ is a right inverse to $\Psi$. Hence $\Psi$ is a split epimorphism. Since $\Psi$ and $\tilde{\Psi}$ are $(G,\C^{\times})$-equivariant we can apply the functor $p_* \ms{H}om_{\widetilde{\W}_{\Xo}}( \L_\chi, - )^G$, which by \cite[Proposition 2.8 (ii)]{KR} is an equivalence, to the morphism $\L_{\chi + \theta} \otimes \C_{\theta} \otimes W \rightarrow \L_\chi$ to get the required (necessarily split, epic) morphism.\\
Part $(ii)$ is similar. Again using Proposition \ref{lem:qhriso}, $\chi \rightarrow \chi + \theta$ implies that
$$
\E^{-\theta}_{\chi + \theta} \otimes \E^{\theta}_{\chi} \twoheadrightarrow \E_{\chi + \theta}.
$$
Therefore there exists some $k$ and $\phi_i \in \E^{-\theta}_{\chi + \theta}$, $\psi_i \in \E^{\theta}_{\chi}$ for $i \in [1,k]$ such that
$$
(\msf{id}_{\mc{L}_{\chi + \theta}} \otimes \msf{ev}) \circ \Bigl( \sum_{i = 1}^k  (\psi_i \otimes \msf{id}_{\C_{-\theta}}) \circ \phi_i \Bigr) = \msf{id}_{\mc{L}_{\chi + \theta}}.
$$
Let $U = \Span_{\C} \, \{ \psi_i \, : \, i \in [1,k] \}$ and define $\Phi \, : \L_{\chi} \otimes U \longrightarrow \L_{\chi + \theta} \otimes \C_{\theta}$ by $\Phi(u \otimes \psi) = \psi(u)$. The map $\tilde{\Phi} \, : \, \L_{\chi + \theta} \otimes \C_{\theta} \longrightarrow \L_\chi \otimes U$ defined by
$$
v \mapsto (\msf{id}_{\L_{\chi}} \otimes \msf{id}_{U} \otimes \msf{ev}) \Bigl( \sum_{i = 1}^k \phi_i(v) \otimes \psi_i \Bigr)
$$
is a right inverse to $\Phi$. Hence $\Phi$ is a split epimorphism. Since $\Phi$ and $\tilde{\Phi}$ are $(G,\C^{\times})$-equivariant we can apply $p_* \ms{H}om_{\widetilde{\W}_{\Xo}}( \L_\chi, - )^G$ to the morphism $\L_{\chi} \otimes U \longrightarrow \L_{\chi + \theta} \otimes \C_{\theta}$ to get the required (necessarily split, epic) morphism.
\end{proof}

\begin{proof}[Proof of Theorem \ref{thm:equivalence}]
It follows from the equivalence \cite[Proposition 2.8 (iv)]{KR} that $\Gamma(Y_{\vartheta},\AW_\chi)^F = \E_\chi$. Therefore part (i) follows from Proposition \ref{lem:qhriso}. Lemma \ref{lem:shiftimpliesepi} and Theorem \ref{thm:KRConditions} show that $\chi \leftarrow \chi + n \theta$ for all $n \in \Z_{\ge 0}$ implies that $\mathbb{R}^i f_*(\ms{M}) = 0$ for all $i > 0$ and all $\ms{M} \in \Mod^{\mathrm{good}}_F(\AW_\chi)$. Similarly, $\chi \rightarrow \chi + n \theta$ for all $n \in \Z_{\ge 0}$ implies that every good $\AW_\chi$-module is generated, locally on $Y_0$, by its global sections. Let $o$ denote the image of the origin of $T^* V$ in $Y_0$. The $\C^{\times}$-action we have defined on $Y_0$ (via the $\C^{\times}$-action on $T^* V$) shrinks every point to $o$, in the sense that $\displaystyle \lim_{t \rightarrow \infty} T_t (y) = o$ for all $y \in Y_0$. In such a situation, \cite[Lemma 2.13]{KR} says that $\mathbb{R}^i f_*(\ms{M}) = 0$ for all $i > 0$ and all $\ms{M} \in \Mod^{\mathrm{good}}_F(\AW_\chi)$ implies that $\Hom_{\Mod^{\mathrm{good}}_F(\AW_\chi)}(\AW_\chi, - )$ is an exact functor. Similarly, \cite[Lemma 2.14]{KR} says that if every good $\AW_\chi$-module $\ms{M}$ is generated, locally on $Y_0$, by its global sections then every $\ms{M}$ is generated by its $F$-invariant global sections. That is,
$$
\AW_\chi \otimes_{\U_\chi} \Hom_{\Mod_F^{\mathrm{good}}(\AW_\chi)}(\AW_\chi, \ms{M} ) \twoheadrightarrow \ms{M}.
$$
With these facts one can follow the proof of \cite[Corollary 11.2.6]{HTT}, more or less word for word.
\end{proof}

\section{Hypertoric Varieties}\label{sec:geometry}

\subsection{} As we have seen in the previous section, when one has a reductive group $G$ acting on a vector space $V$, there exists a family of $W$-algebras on the Hamiltonian reduction of the cotangent bundle of $V$. The simplest such situation is where $G = \T$, a $d$-dimensional torus. In this case the corresponding Hamiltonian reduction is called a hypertoric variety. In this section we recall the definition of, and basic facts about, hypertoric varieties. The reader is advised to consult \cite{Proudfoot} for an excellent introduction to hypertoric varieties. Here we will follow the algebraic presentation given in \cite{HS}. Thus, in this section only, spaces will be algebraic varieties over $\C$ in the Zariski topology.

\subsection{Torus actions}\label{sec:defineaction} Fix $1 \le d < n \in \N$ and let $\T := (\C^{\times})^d$. We consider $\T$ acting algebraically on the $n$-dimensional vector space $V$. If we fix coordinates on $V$ such that the corresponding coordinate functions $x_1, \ds , x_n$ are eigenvectors for $\T$ then the action of $\T$ is encoded by a $d \times n$ integer valued matrix $A = [ a_1, \ds,  a_n] = (a_{ij})_{i \in [1,d]; j \in [1,n]}$ and is given by
$$
(\xi_1, \ds, \xi_d) \cdot x_i = \xi_1^{a_{1i}} \cdots \xi_d^{a_{di}} x_i, \quad \forall \, (\xi_1, \ds, \xi_d) \in \T.
$$
We fix the co-ordinate ring of $V$ to be $R := \C [ x_1, \ds, x_n]$. The algebra $R$ is graded by the action of $\T$, $\deg (x_i) = a_i$. We make the assumption that the $d \times d$ minors of $A$ are relatively prime. This ensures that the map $\Z^n \stackrel{A}{\longrightarrow} \Z^d$ is surjective and hence the stabilizer of a generic point is trivial.

\subsection{}\label{sec:4.3} Since $\Z^d$ is a free $\Z$-module, the above assumption implies that we can choose an $n \times (n-d)$ integer valued matrix $B = [b_1, \ds, b_n]^T$ so that the following sequence is exact:
\beq\label{eq:exact1}
0 \longrightarrow \Z^{n- d} \stackrel{B}{\longrightarrow} \Z^{n} \stackrel{A}{\longrightarrow} \Z^d = \X \longrightarrow 0,
\eeq
where, as before, $\X := \Hom_{gp}(\T,\C^{\times})$ is the character lattice of $\T$ and $\Z^n$ is identified with the character lattice of $(\C^{\times})^n \subset GL(\C^n)$. The dual $\Hom_{\Z}(\X,\Z)$ of $\X$, which parameterizes one-parameter subgroups of $\T$, will be denoted $\Y$. Applying the functor $\Hom( \, - \, , \C^{\times})$ to the sequence (\ref{eq:exact1}) gives a short exact sequence of abelian groups
\beq\label{eq:exact2}
1 \longrightarrow \T \stackrel{A^T}{\longrightarrow} (\C^{\times})^n \stackrel{B^T}{\longrightarrow} (\C^{\times})^{n-d} \longrightarrow 1.
\eeq
Let $\mf{t}$ denote the Lie algebra of $\T$ and $\mf{g}$ the Lie algebra of $(\C^{\times})^n$. Differentiating the sequence (\ref{eq:exact2}) produces the short exact sequence
\beq\label{eq:exact3}
0 \longrightarrow \mf{t} \stackrel{A^T}{\longrightarrow} \g \stackrel{B^T}{\longrightarrow} \mathrm{Lie} (\C^{\times})^{n-d} \longrightarrow 0,
\eeq
of abelian Lie algebras.

\subsection{Geometric Invariant Theory}\label{sec:GIT} The standard approach to defining ``sensible'' algebraic quotients of $V$ by $\T$ is to use geometric invariant theory. We recall here the basic construction that will be used. Let $\X_{\Q} := \X \otimes_{\Z} \Q$ be the space of fractional characters. We fix a stability parameter $\delta \in \X_{\Q}$. For $\underline{k} = (k_1, \ds, k_n) \in \N^n$, the monomial $x_1^{k_1} \cdots x_n^{k_n}$ will be written $x^{\underline{k}}$. Then $\lambda \cdot x^{\underline{k}} = \lambda^{A \cdot \underline{k}} x^{\underline{k}}$ and we define
$$
R^{\delta} := \textrm{Span}_{\C} \, ( x^{\underline{k}} \, | \, A \cdot \underline{k} = \delta ),
$$
to be the space of $\T$-semi-invariants of weight $\delta$. Note that $R^{\delta} = 0$ if $\delta \notin \X$. A point $p \in V$ is said to be $\delta$-\textit{semi-stable} if there exists an $n > 0$ such that $n \delta \in \X$ and $f \in R^{n \delta}$ with $f(p) \neq 0$. A point $p$ is called $\delta$-\textit{stable} if it is $\delta$-semi-stable and in addition its stabilizer under $\T$ is finite. The set of $\delta$-semi-stable points in $V$ will be denoted $V^{\mathrm{ss}}_\delta$. The parameter $\delta$ is said to be \textit{effective} if $R^{n \delta} \neq 0$ for some $n > 0$ (by the Nullstellensatz this is equivalent to $V^{\mathrm{ss}}_{\delta} \neq \emptyset$).

\begin{defn}\label{defn:GIT}
Let $\delta \in \X_\Q$ be an effective stability condition. The \textit{G.I.T quotient} of $V$ by $\T$ with respect to $\delta$ is the variety
$$
X(A,\delta) := \textrm{Proj} \, \bigoplus_{k \ge 0} R^{k \delta};
$$
it is projective over the affine quotient
$$
X(A,0) := \textrm{Spec} \, (R^{\T}).
$$
\end{defn}

If a point $p \in V$ is not $\delta$-semi-stable it is called $\delta$-\textit{unstable}. Using the one-parameter subgroups of $\T$ one can describe the set $V_\delta^{\mathrm{us}}$ of $\delta$-unstable points. We denote by $\langle - , - \rangle$ the natural pairing between $\Y$ and $\X$ (and by extension between $\mf{t}$ and $\mf{t}^*$). Let $V(f_1, \dots, f_k)$ denote the set of common zeros of the polynomials $f_1, \dots, f_k \in R$.

\begin{lem}\label{lem:unstable}
Let $\delta \in \X_{\Q}$ be an effective stability parameter. The $\delta$-unstable locus is
\beq\label{eq:unstable}
V_\delta^{\mathrm{us}} = \bigcup_{\stackrel{\lambda \in \Y}{\langle \lambda , \delta \rangle < 0}} V(x_i \, | \, \langle \lambda, a_i \rangle < 0 ).
\eeq
Moreover, there exists a finite set $\mc{F}(\delta) = \{ \lambda_1, \ds, \lambda_k \} \subset \Y$, $\langle \lambda_i , \delta \rangle < 0$ such that $$
\bigcup_{\stackrel{\lambda \in \Y}{\langle \lambda , \delta \rangle < 0}} V(x_i \, | \langle \lambda, a_i \rangle < 0 ) = \bigcup_{\lambda \in \mc{F}(\delta)}  V(x_i \, | \langle \lambda , a_i \rangle < 0 ).
$$
\end{lem}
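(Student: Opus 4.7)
The plan is to derive the first equality directly from the Hilbert--Mumford numerical criterion, and to obtain the finiteness statement by a simple pigeonhole argument on the indexing set $\{1,\dots,n\}$.

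First I would set up the Hilbert--Mumford criterion in our concrete setting. Because $\T$ is a torus acting linearly on $V$ with weights $a_1,\dots,a_n$, a one-parameter subgroup $\lambda\in\Y$ acts on coordinates by $\lambda(t)\cdot x_i = t^{\langle \lambda,a_i\rangle}x_i$. Consequently, for a point $p=(p_1,\dots,p_n)\in V$, the limit $\lim_{t\to 0}\lambda(t)\cdot p$ exists in $V$ if and only if $p_i=0$ for every $i$ with $\langle \lambda,a_i\rangle<0$, i.e.\ if and only if $p\in V(x_i\mid \langle\lambda,a_i\rangle<0)$. The linearisation of the trivial bundle by the character $\delta$ then gives Hilbert--Mumford weight $\langle\lambda,\delta\rangle$, and the numerical criterion says that $p$ is $\delta$-unstable precisely when there exists some $\lambda\in\Y$ with $\langle\lambda,\delta\rangle<0$ such that the above limit exists. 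Combining these two observations yields the desired description
\[
V_\delta^{\mathrm{us}} \;=\; \bigcup_{\lambda\in\Y,\,\langle\lambda,\delta\rangle<0} V(x_i \mid \langle\lambda,a_i\rangle<0).
\]
The one point where I would be slightly careful is to check that the definition of $\delta$-(semi)stability via the graded ring $R^{k\delta}$ used in Definition \ref{defn:GIT} matches the standard linearised formulation, so that Hilbert--Mumford applies without sign or rationality issues; this is routine since, for $\delta\in \X_\Q$, one may clear denominators and work with a positive integer multiple of $\delta$.

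For the second assertion, the key observation is that the set $V(x_i\mid \langle\lambda,a_i\rangle<0)$ depends on $\lambda$ only through the subset
\[
I(\lambda) := \{\,i\in\{1,\dots,n\} \mid \langle\lambda,a_i\rangle<0\,\}\subseteq\{1,\dots,n\}.
\]
Thus the indexing set of the union is effectively the collection
$\mathcal{S}:=\{I(\lambda)\mid \lambda\in\Y,\,\langle\lambda,\delta\rangle<0\}$,
which is a subset of the finite power set $2^{\{1,\dots,n\}}$ and hence finite. For each $I\in\mathcal{S}$ I would choose one witness $\lambda_I\in\Y$ with $I(\lambda_I)=I$ and $\langle\lambda_I,\delta\rangle<0$, and then set $\mc{F}(\delta):=\{\lambda_I\mid I\in\mathcal{S}\}$. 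By construction the two unions coincide.

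The main (and really only) obstacle is invoking the Hilbert--Mumford criterion in precisely the right form: one must know that for a linearised $\T$-action, the limit-existence half of the criterion gives exactly the coordinate-vanishing condition above, and that for torus actions on affine space there is no gap between the ``naive'' and ``linearised'' versions of (semi)stability once one allows $\delta\in\X_\Q$. Everything else is bookkeeping with one-parameter subgroups and a trivial finiteness argument.
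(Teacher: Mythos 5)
Your proposal is correct and follows essentially the same route as the paper. What you call ``the Hilbert--Mumford numerical criterion for a linearised torus action on affine space'' is exactly what the paper proves, except that the paper makes the reduction to Kempf's theorem explicit: it adjoins a variable $t$ of weight $-\delta$ to form $S = R[t]$, shows that $u$ is $\delta$-unstable iff $\overline{\T\cdot(u,1)}$ meets $V\times\{0\}$, and then invokes Kempf's instability theorem to produce the destabilizing one-parameter subgroup $\lambda$. So the step you flag as ``the one place to be careful'' (matching the $\mathrm{Proj}$-of-a-graded-ring definition of semistability with the linearised one) is precisely the content of the paper's $R[t]$ manoeuvre; naming it and sketching why it is clean would close the only real gap in your writeup. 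One small sign remark: you write the action on coordinate functions ($\lambda(t)\cdot x_i = t^{\langle\lambda,a_i\rangle}x_i$) but then treat it as the action on the $i$-th coordinate of a point; with the paper's convention the point transforms by $t^{-\langle\lambda,a_i\rangle}$ and one gets the destabilizing $\lambda$ with $\langle\lambda,\delta\rangle > 0$ and $p_i=0$ for $\langle\lambda,a_i\rangle>0$, which becomes the statement of the lemma after replacing $\lambda$ by $-\lambda$. Your version is internally consistent and reaches the same union, so this is only a caution about stating the convention, not an error. Finally, the pigeonhole argument for the finiteness of $\mathcal{F}(\delta)$ is correct and is in fact more than the paper supplies, since the paper leaves that part implicit.
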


\begin{proof}
Let $S := R[t]$ and extend the action of $\T$ from $R$ to $S$ by setting $g \cdot t = \delta(g)^{-1} t$, $\, \forall \, g \in \T$. Then $(S)^{\T} = \bigoplus_{n \ge 0} \, R^{n \delta} \cdot t^n$. Now
$$
\begin{array}{rcl}
u \in V^{\mathrm{us}}_{\delta} & \Longleftrightarrow & f(u) = 0 \quad \forall \, f \in R^{n \delta}, n > 0, \\
& \Longleftrightarrow & F(u,1) = 0 \quad \forall \, F \in (S^{\T})_+ = (S_+)^{\T}, \\
& \Longleftrightarrow & \overline{\T \cdot (u,1)} \cap V \times \{ 0 \} \neq \emptyset,
\end{array}
$$
where $(S^{\T})_+ = (S_+)^{\T}$ follows from the fact that $\T$ is reductive. Then \cite[Theorem 1.4]{KempfInvariants} says that there exists a one-parameter subgroup $\lambda \in \Y$ such that $\lim_{t \rightarrow 0} \lambda(t) \cdot (u,1) \in V \times \{ 0 \}$. Writing $u = u_1 + \ds + u_n$ such that $x_i(u) = u_i$, we have
$$
\lambda(t) \cdot (u,1) = \Bigl(\sum_{i = 1}^n t^{ - \langle \lambda, a_i \rangle } u_i , t^{\langle \lambda, \delta \rangle}\Bigr)
$$
which implies that $u_i = 0$ for all $i \in [1,n]$ such that $\langle \lambda, a_i \rangle > 0$ and $\langle \lambda, \delta \rangle > 0$. This shows that the left hand side of (\ref{eq:unstable}) is contained in the right hand side. Conversely, if $u$ is $\delta$-semi-stable then it is also $\phi$-semi-stable with respect to the action of the one dimensional torus $\lambda : \T \hookrightarrow \T$ on $V$, where $\phi$ is the character of $\T$ defined by $t \mapsto t^{\langle \lambda, \delta \rangle}$. \end{proof}

\subsection{} The variety $X(A,\delta)$ is a toric variety and, as shown in \cite[Corollary 2.7]{HS}, any semi-projective toric variety equipped with a fixed point is isomorphic to $X(A,\delta)$ for suitable $A$ and $\delta$. Fix $S \subset V$ and let $\delta_1, \delta_2 \in \X_\Q$ be two stability parameters such that $S^{\mathrm{ss}}_{\delta_1},S^{\mathrm{ss}}_{\delta_2} \neq 0$. Then $\delta_1$ and $\delta_2$ are said to be equivalent if $S^{\mathrm{ss}}_{\delta_1} = S^{\mathrm{ss}}_{\delta_2}$. The set of all $\rho$ equivalent to a fixed $\delta$ will be denoted $C(\delta)$. These equivalence classes form the relative interiors of the cones of a rational polyhedral fan $\Delta(\T, S)$, called the G.I.T. fan, in $\X_\Q$. The support of $\Delta(\T,S)$ is the set of all effective $\delta \in \X_\Q$ such that $S^{\mathrm{ss}}_\delta \neq 0$ and is denoted $|\Delta(\T, S)|$. We will mainly be concerned with $S = V$. The cones in $\Delta(\T,V)$ having the property that the stable locus is properly contained in the semi-stable locus are called the walls of $\Delta(\T,V)$. The G.I.T. fan is quite difficult to describe explicitly, see \cite{OdaPark}. However one has the following explicit description of the walls of $\Delta(\T,V)$.

\blem\label{lem:walls}
Let $\T$ act on $V$ via $A$ as in (\ref{sec:defineaction}). Then $|\Delta(\T,V)| = \sum_{i = 1}^n \Q_{\ge 0} \cdot a_i$ and the walls of the fan are $\sum_{i \in J} \Q_{\ge 0} \cdot a_i$, where $J \subset [1,n]$ is any subset such that $\dim_{\Q} (\textrm{Span}_{\Q} ( a_i \, | \, i \in J )) = d - 1$.
\elem

\begin{proof}
Let $0 \neq \delta \in \sum_{i = 1}^n \Z_{\ge 0} \cdot a_i$ and write $\delta = \sum_{i \in I} n_i a_i$ where $I \subset [1,n]$ and $n_i > 0$ for all $i \in I$. Then $0 \neq f = \prod_{i \in I} x_i^{n_i} \in R^\delta$ implies that $\delta$ is effective. Now let $\delta \in \X$ be any effective stability parameter and choose $0 \neq p \in V^{\mathrm{ss}}_\delta$. Write $p = p_1 + \ds + p_n$ so that $x_i(p)= p_i$ and let $I = \{ i \in [1,n] \, | \, p_i \neq 0 \}$. Then Lemma \ref{lem:unstable} shows that
$$
\begin{array}{rcl}
p \in V^{\mathrm{ss}}_{\delta} & \Longleftrightarrow & \left( \langle \lambda, \delta \rangle < 0 \, \Rightarrow \exists \, i \in I, \, \langle \lambda, a_i \rangle < 0 \right) \\
& \Longleftrightarrow & \{ \lambda \in \Y \, | \, \langle \lambda, \delta \rangle < 0 \} \cap \left( \sum_{i \in I} \Z_{\ge 0} \cdot a_i \right)^{\vee} = \emptyset \\
& \Longleftrightarrow & \left( \sum_{i \in I} \Z_{\ge 0} \cdot a_i \right)^{\vee} \subset  \{ \lambda \in \Y \, | \, \langle \lambda, \delta \rangle \ge 0 \} \\
& \Longleftrightarrow & \delta \in \sum_{i \in I} \Z_{\ge 0} \cdot a_i.
\end{array}
$$
Now choose $\delta \in \X$ to lie on a wall. By definition, there exists a $\delta$-semi-stable point $p$ such that $\dim \textrm{Stab}_{\, \T}(p) \ge 1$. Let $I$ be as above. Then $\dim \textrm{Stab}_{\, \T}(p) \ge 1$ implies that the subspace $\sum_{i \in I} \Q \cdot a_i$ must be a proper subspace of $\X_\Q$. The above reasoning shows that $\delta \in \sum_{i \in I} \Z_{\ge 0} \cdot a_i$ as required. \end{proof}
Lemma \ref{lem:walls} shows that, under our assumption on $A$, the maximal cones of $\Delta(\T,V)$ are all $d$-dimensional. We will refer to these maximal cones as the \textit{$d$-cones} of $\Delta(\T,V)$. The integer valued matrix $A$ is said to be \textit{unimodular}\footnote{In \cite{HS}, the authors define $A$ to be unimodular if every non-zero $d \times d$ minor of $A$ has the same absolute value. However they also, as do we, make the assumption that the $d \times d$ minors of $A$ are relatively prime. Thus, their definition agrees with ours.} if every $d \times d$ minor of $A$ takes values in $\{ - 1, 0 ,1\}$. Combining \cite[Corollary 2.7]{HS} and \cite[Corollary 2.9]{HS} gives:

\begin{thm}
The variety $X(A,\delta)$ is an orbifold if and only if $\delta$ belongs to the interior of a $d$-cone of $\Delta(\T,V)$. It is a smooth variety if and only if $\delta$ belongs to the interior of a $d$-cone of $\Delta(\T,V)$ and $A$ is unimodular.
\end{thm}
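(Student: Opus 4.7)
The plan is to translate the statement into geometric invariant theory language via Lemma \ref{lem:walls} and then analyze $\T$-stabilizers on the semi-stable locus. First I would observe that, by definition of the walls of $\Delta(\T,V)$, the parameter $\delta$ lies in the interior of a $d$-cone precisely when $\delta$ avoids every wall, which is equivalent to $V_\delta^{\mathrm{ss}} = V_\delta^{\mathrm{s}}$. Thus the ``interior of a $d$-cone'' hypothesis is exactly the assertion that every $\delta$-semi-stable point has finite $\T$-stabilizer. Moreover the characterization in the proof of Lemma \ref{lem:walls} (through the cones $\sum_{i \in I} \Q_{\geq 0} \cdot a_i$ with $I = \{i : p_i \neq 0\}$) gives explicit control over which $d$-element subsets of $\{a_1, \ldots, a_n\}$ govern a given chamber.

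For the orbifold claim, when $V_\delta^{\mathrm{ss}} = V_\delta^{\mathrm{s}}$ the quotient map $V_\delta^{\mathrm{ss}} \ra X(A,\delta)$ is a geometric quotient by the reductive group $\T$ with finite stabilizers acting on the smooth open set $V_\delta^{\mathrm{ss}} \subset V$, so Luna's \'etale slice theorem identifies $X(A,\delta)$ locally with a finite quotient of a smooth variety; hence it is an orbifold. Conversely, if $\delta$ lies on a wall then by the wall description in Lemma \ref{lem:walls} one can produce a $\delta$-semi-stable point $p$ with positive-dimensional stabilizer (take $p$ so that $\{a_i : p_i \neq 0\}$ spans a proper subspace of $\X_\Q$ containing $\delta$). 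The positive-dimensional fiber of $V_\delta^{\mathrm{ss}} \to X(A,\delta)$ at $[p]$ prevents $X(A,\delta)$ from being an orbifold in any neighborhood of $[p]$, since a $\T$-quasi-torsor of relative dimension $d$ cannot cover an orbifold by a smooth variety of dimension $2n - d$.

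For smoothness I would argue pointwise under the assumption that $\delta$ is in the interior of a $d$-cone and $A$ is unimodular. For any $\delta$-semi-stable $p$, set $I = \{i : p_i \neq 0\}$; then $\mathrm{Stab}_{\T}(p)$ is the kernel of the map $\T \ra (\C^*)^I$ determined by the columns $\{a_i : i \in I\}$, and its order equals the index $[\X : \Z\cdot\{a_i : i \in I\}]$, which is (up to sign) a non-zero $d \times d$ minor of the submatrix $A_I$. Unimodularity forces any such minor to be $\pm 1$, giving trivial stabilizers everywhere; combined with reductivity of $\T$ and smoothness of $V_\delta^{\mathrm{ss}}$, this makes $V_\delta^{\mathrm{ss}} \to X(A,\delta)$ a principal $\T$-bundle and so $X(A,\delta)$ is smooth. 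Conversely, if some $d \times d$ minor of $A$ has absolute value strictly greater than one, then for $\delta$ in the interior of the corresponding $d$-cone the generic stabilizer is a non-trivial finite cyclic group, and $X(A,\delta)$ acquires a genuine orbifold singularity. The main obstacle is the bookkeeping in the smoothness direction, namely verifying that the minor governing a generic semi-stable point in a given chamber is really the one produced by the defining indices of that chamber; this is essentially the combinatorial content of \cite[Corollaries 2.7 and 2.9]{HS}, and once established the remainder of the argument is standard GIT.
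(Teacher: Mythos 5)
The paper does not prove this theorem; it is invoked by citing \cite[Corollaries 2.7 and 2.9]{HS}, so any honest proof you give is automatically ``a different route'' from the paper's one-line citation. Your plan --- reduce to stabilizer analysis on the semistable locus, use Luna's slice theorem for the ``if'' direction of the orbifold claim, and use unimodularity to kill finite stabilizers for the ``if'' direction of the smoothness claim --- is reasonable and captures the content of the HS corollaries for those directions. However, both ``only if'' directions as you have written them have genuine gaps.

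For the orbifold converse, you produce a semi-stable point $p$ with positive-dimensional stabilizer and then assert that ``the positive-dimensional fiber \ldots prevents $X(A,\delta)$ from being an orbifold \ldots since a $\T$-quasi-torsor of relative dimension $d$ cannot cover an orbifold by a smooth variety of dimension $2n-d$.'' First, the dimension should be $n-d$, not $2n-d$: here the space is $V$, not $T^*V$. Second, and more seriously, the existence of a positive-dimensional fiber of the GIT quotient map is not by itself an obstruction to the quotient being smooth or an orbifold (affine GIT quotients contract non-closed orbits and routinely have positive-dimensional fibers over smooth quotients). What is needed is an analysis of the local affine toric model of $X(A,\delta)$ near $[p]$ --- one must see that the corresponding cone in the fan fails to be simplicial. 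That is precisely the combinatorial content of the HS argument and is not substituted for by a dimension count.

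For the smoothness converse, you claim that if $A$ is not unimodular then ``the generic stabilizer is a non-trivial finite cyclic group.'' This is false under the paper's standing hypothesis that the $d\times d$ minors of $A$ are coprime: that hypothesis makes $\T\hookrightarrow(\C^*)^n$ injective, so the stabilizer of a generic point (all coordinates nonzero) is always trivial. What actually happens is that a \emph{special} semistable point --- one with $|I|=d$ and the corresponding $d\times d$ minor of absolute value $>1$ --- has nontrivial cyclic stabilizer. Even so, you still have to rule out the possibility that this finite group acts on the Luna slice through pseudo-reflections (in which case the quotient would remain smooth, as in $\C^2/(\Z/2)$ acting by $(x,y)\mapsto(-x,y)$). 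Ruling that out again requires the explicit toric local model, which is where the real work of \cite[Cor.~2.9]{HS} lies; your phrase ``the remainder of the argument is standard GIT'' is covering over exactly that step. In summary, the forward implications are fine, but both converses need the simplicial/smooth fan analysis that you delegate to \cite{HS} without reproducing.
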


\subsection{Hypertoric varieties}\label{sec:hypertoric}

Define $A^{\pm} := [A,-A]$, a $d \times 2n$ matrix. It defines a grading on the ring $R := \C[T^* V] = \C[x_1, \ds, x_n, y_1, \ds, y_n]$. For $\delta$ in the interior of a $d$-cone of $\Delta(\T,T^* V)$, the corresponding toric variety $X(A^\pm,\delta)$ is called a Lawrence toric variety associated to $A$. It is a G.I.T. quotient of the symplectic vector space $T^* V$ with canonical symplectic form
$$
\omega = d x_1 \wedge d y_1 + \ds +  d x_n \wedge d y_n.
$$
The action of $\T$ is Hamiltonian and the moment map is given by
$$
\mu \, : \, T^* V \longrightarrow \mf{t}^*, \quad \mu(\mbf{x},\mbf{y}) = \Bigl(  \sum_{j = 1}^n  a_{ij} x_j y_j  \Bigr)_{i \in [1,d]}.
$$
Consider the ideal
$$
I := I( \mu^{-1}(0)) = \Bigl\langle \, \sum_{j = 1}^n a_{ij} x_j y_j \, : \, i \in [1,d] \Bigr\rangle \subset R,
$$
it is homogeneous and generated by $\T$-invariant polynomials.

\begin{defn}\label{defn:hypertoric}
The hypertoric variety associated to $A$ and $\delta$ is defined to be
$$
Y(A,\delta) := \mu^{-1}(0) /\!/_{\delta} \, \T = \textrm{Proj} \, \bigoplus_{k \ge 0 } \, ( R / I )^{k \delta};
$$
it is projective over the affine quotient
$$
Y(A,0) := \textrm{Spec} \, \left( ( R / I )^{\T} \right).
$$
\end{defn}

The basic properties of hypertoric varieties can be summarized as: 

\begin{prop}[\cite{HS}, Proposition 6.2]\label{prop:smoothres}
If $\delta$ is in the interior of a $d$-cone of $\Delta(\T,\mu^{-1}(0))$ then the hypertoric variety $Y(A,\delta)$ is an orbifold. It is smooth if and only if $\delta$ is in the interior of a $d$-cone of $\Delta(\T,\mu^{-1}(0))$ and $A$ is unimodular.
\end{prop}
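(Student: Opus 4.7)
The plan is to reduce the proposition to the analogous criterion for Lawrence toric varieties proven just before Subsection \ref{sec:hypertoric}. The hypertoric variety $Y(A,\delta)$ sits naturally as a closed subscheme of $X(A^{\pm},\delta) = T^*V //_{\delta}\,\T$, cut out by the $\T$-invariant moment-map ideal $I$. Equivalently, $Y(A,\delta) = \mu^{-1}(0)^{\mathrm{ss}}_{\delta} / \T$, where semistability is inherited from $T^*V$.

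First I would show that $\mu^{-1}(0)^{\mathrm{ss}}_{\delta}$ is smooth. Since $\delta$ lies in the interior of a $d$-cone of $\Delta(\T,\mu^{-1}(0))$, there are no strictly semistable points in $\mu^{-1}(0)$, so $\T$ acts with finite stabilizers on $\mu^{-1}(0)^{\mathrm{ss}}_{\delta}$. This forces the infinitesimal action map $\mf{t}\longrightarrow T_p(T^*V)$ to be injective at each such $p$; because $d\mu_p : T_p(T^*V)\longrightarrow\mf{t}^*$ is (up to the symplectic form) dual to the infinitesimal action, $\mu$ is a submersion on $\mu^{-1}(0)^{\mathrm{ss}}_{\delta}$, and so that locus is smooth of pure dimension $2n-d$.

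Next, the geometric quotient of a smooth variety by an algebraic group action with finite stabilizers is an orbifold, which gives the first assertion. For actual smoothness of $Y(A,\delta)$, one further needs $\T$ to act \emph{freely} on $\mu^{-1}(0)^{\mathrm{ss}}_{\delta}$. Mimicking the argument used for $X(A^{\pm},\delta)$, freeness is equivalent to every $d\times d$ minor of $A^{\pm}$ indexed by the coordinate columns available at a semistable point lying in $\{-1,0,1\}$, i.e.\ to $A^{\pm}$ being unimodular. Since any $d\times d$ minor of $A^{\pm}=[A,-A]$ either vanishes (whenever a column of $A$ and the corresponding column of $-A$ are both selected) or equals $\pm 1$ times a $d\times d$ minor of $A$, unimodularity of $A^{\pm}$ is equivalent to unimodularity of $A$.

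The main obstacle is tying the combinatorial $d$-cone condition on $\Delta(\T,\mu^{-1}(0))$ to the pointwise local geometry. Specifically, one must use Lemma \ref{lem:unstable} applied inside $\mu^{-1}(0)$ to verify that every $p \in \mu^{-1}(0)^{\mathrm{ss}}_{\delta}$ has finite $\T$-stabilizer, and analyze which coordinate subspaces of $T^*V$ can meet $\mu^{-1}(0)$ non-trivially; the difference between the fans $\Delta(\T,T^*V)$ and $\Delta(\T,\mu^{-1}(0))$ means this cannot simply be read off from the Lawrence toric case. Once established, the passage from finite stabilizers to freeness, and therefrom to the orbifold/smooth dichotomy, is a direct adaptation of the unimodularity argument given earlier for $X(A^{\pm},\delta)$.
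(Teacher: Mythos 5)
The paper offers no proof of this statement at all; it is cited verbatim from Hausel--Sturmfels \cite{HS}, Proposition 6.2, so there is no in-house argument to compare yours against. That said, your reduction to the Lawrence toric picture and your use of the standard symplectic fact (finite stabilizer at $p$ $\Leftrightarrow$ $d\mu_p$ surjective, via $\ker d\mu_p = (\mf{t}\cdot p)^{\omega}$) are the natural route; the first assertion (interior $\delta$ implies an orbifold) and the ``if'' direction of the second are essentially complete modulo the usual citations about stable loci giving geometric quotients.

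The ``only if'' direction of the smoothness criterion, however, has a genuine gap which your final paragraph signals but does not fill. A quotient of a smooth variety by a torus acting with finite but non-trivial stabilizers can still be smooth: take $\C^*$ acting on $\C^2\setminus\{0\}$ with weights $(1,2)$, which has stabilizer $\mu_2$ along a divisor, yet the quotient $\mathbb{P}(1,2)\cong\mathbb{P}^1$ is smooth. So ``$Y(A,\delta)$ smooth $\Rightarrow$ $\T$ acts freely'' is not formal, and the passage ``freeness is equivalent to the relevant $d\times d$ minors of $A^{\pm}$ lying in $\{-1,0,1\}$'' only gives you one implication of the iff you need. What closes the argument is the toric dictionary at the fixed points: the $\T^{n-d}$-fixed points of $Y(A,\delta)$ lie in the extended core, and the local chart of $Y(A,\delta)$ at such a fixed point is an affine toric variety whose defining cone is governed by a $d\times d$ submatrix of $A$; for toric singularities, smoothness at a fixed point is \emph{equivalent} to unimodularity of the cone, not merely a consequence of it. One also has to show that \emph{every} $d\times d$ minor of $A$ actually arises this way, which is where the decomposition of the extended core $\mc{E}$ into the $\mc{E}_I$ for all $I\subset[1,n]$ and Lemma \ref{lem:equalcones} do real work. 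Finally, the implication ``$Y(A,\delta)$ smooth $\Rightarrow$ $\delta$ lies in the interior of a $d$-cone'' is not addressed in your sketch at all and needs a separate (if short) argument.
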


\subsection{}

In this subsection we show that the assumptions of (\ref{sec:affinity}) are valid for hypertoric varieties. Let $f : Y(A,\delta) \ra Y(A,0)$ be the projective morphism from $Y(A,\delta)$ to $Y(A,0)$. Lemma \ref{lem:extdim2} below together with Proposition \ref{prop:smoothres} imply that $f$ is birational and hence a resolution of singularities when $Y(A,\delta)$ is smooth. The symplectic form $\omega$ on $T^* V$ induces a symplectic $2$-form on the smooth locus of $Y(A,\delta)$. In particular, when $Y(A,\delta)$ is smooth it is a symplectic manifold. Proposition \ref{prop:symplecticvariety} below shows that $Y(A,0)$ is a symplectic variety and the resolution $f$ is a symplectic resolution.\footnote{We refer the reader to \cite{FuSurvey} for the definition of symplectic variety and symplectic resolution.} This implies that $Y(A,0)$ is also normal. 

\begin{lem}\label{prop:compint}
The moment map is flat and $\mu^{-1}(0)$ is a reduced complete intersection in $T^* V$. If no row of the matrix $B$ is zero then $\mu^{-1}(0)$ is irreducible. 
\end{lem}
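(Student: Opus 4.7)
The argument breaks into four natural steps: dimension computation, flatness, reducedness, and irreducibility.

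For the dimension of $\mu^{-1}(0)$, I would stratify $V$ by the locally closed subvarieties $V_S = \{x \in V : x_i = 0 \iff i \in S\}$ indexed by $S \subseteq [1,n]$ and study the projection $\pi(x,y) = x$. Writing $\bar{S} = [1,n] \setminus S$, the fibre of $\pi$ over $x \in V_S$ is $\C^{|S|}_{y_S} \times \ker(A_{\bar S} \cdot \mathrm{diag}(x_{\bar S}))$, and since $A$ has rank $d$ one has $\mathrm{rank}(A_{\bar S}) \ge d - |S|$. This gives $\dim \pi^{-1}(V_S) = 2n - |S| - \mathrm{rank}(A_{\bar S}) \le 2n - d$, with equality at $S = \emptyset$. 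Hence $\dim \mu^{-1}(0) = 2n - d$ and, since this equals $\dim T^*V$ minus the number of defining equations, the polynomials $f_j = \sum_i a_{ij} x_i y_i$ form a regular sequence in the Cohen--Macaulay ring $\C[T^*V]$, so $\mu^{-1}(0)$ is a complete intersection.

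To upgrade this to flatness, I apply the miracle flatness theorem: since $T^*V$ is Cohen--Macaulay and $\mf{t}^*$ is regular, $\mu$ is flat iff all fibres have the same dimension $2n - d$. The $\C^*$-action $(x,y) \mapsto (tx, ty)$ makes $\mu$ equivariant of weight $2$; upper semi-continuity of fibre dimension combined with $t \to 0$ then gives $\dim \mu^{-1}(\chi) \le \dim \mu^{-1}(0) = 2n - d$ for all $\chi$, while the lower bound $\ge 2n - d$ comes from the $d$ defining equations. Surjectivity of $A : \Z^n \twoheadrightarrow \Z^d$ gives surjectivity of $\mu$, so no fibre is empty, and miracle flatness then yields that $\mu$ is flat. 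Reducedness follows from the Cohen--Macaulay property plus an $R_0$ argument: the singular locus of $\mu^{-1}(0)$ is where the Jacobian drops rank, i.e., where $\{a_i : i \in T(x,y)\}$ fails to span $\mf{t}^*$ (with $T(x,y) = \{i : x_i \ne 0 \text{ or } y_i \ne 0\}$). It is covered by subsets of the form $\mu_{I_\lambda}^{-1}(0)$ for $\lambda \in \mf{t} \setminus 0$, where $I_\lambda = \{i : \langle \lambda, a_i \rangle = 0\}$ and $\mu_{I_\lambda}$ is the moment map of the restriction of the action to the coordinates in $I_\lambda$; because $\lambda \ne 0$ annihilates its column span, $\mathrm{rank}(A_{I_\lambda}) \le d - 1$, and applying the first step gives $\dim \mu_{I_\lambda}^{-1}(0) \le 2|I_\lambda| - \mathrm{rank}(A_{I_\lambda}) \le 2n - d - 1$. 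Hence the singular locus has codimension at least one in $\mu^{-1}(0)$, so $R_0$ holds and the scheme is reduced.

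The main obstacle, and the only place where the hypothesis on $B$ enters, is irreducibility. On the open subvariety $\tilde U = \mu^{-1}(0) \cap \{x \in (\C^*)^n\}$ the substitution $z_i = x_i y_i$ rewrites the defining equations as $Az = 0$, so $z \in \ker A = \mathrm{im}\,B \cong \C^{n-d}$ and $\tilde U \cong (\C^*)^n \times \C^{n-d}$, which is smooth and irreducible of dimension $2n - d$. If $Z \ne \overline{\tilde U}$ is another irreducible component, then by equidimensionality $\dim Z = 2n - d$ and $Z$ is contained in some hyperplane $\{x_i = 0\}$, hence in $\C_{y_i} \times \mu_i^{-1}(0)$, where $\mu_i$ is the moment map for the $\T$-action on $\C^{n-1}$ via $A$ with the $i$-th column deleted. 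The hypothesis that the $i$-th row of $B$ is nonzero is equivalent, via the exact sequence (\ref{eq:exact1}), to $a_i$ lying in the $\Q$-span of $\{a_j : j \ne i\}$, so the deleted matrix still has rank $d$; applying the first step gives $\dim \mu_i^{-1}(0) = 2(n-1) - d$, whence $\dim Z \le 2n - d - 1$, a contradiction. Therefore $\mu^{-1}(0) = \overline{\tilde U}$ is irreducible and, being reduced, is an affine variety.
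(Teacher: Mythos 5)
Your proof is correct, but it proceeds by a genuinely different route from the paper. The paper's argument (attributed to Hausel and Sturmfels in the acknowledgments) is Gr\"obner-theoretic: after column operations it arranges for the leading $d\times d$ block of $A$ to be diagonal, computes the initial ideal $\mathrm{in}(I) = \langle x_1y_1,\dots,x_dy_d\rangle$ via the division algorithm, and reads off dimension, regular sequence, and radicality all from this monomial ideal; flatness then follows from \cite[Lemma 2.3]{Holland}, and affineness is obtained by explicitly solving the equations for $y_1,\dots,y_d$ on the open set $\{x_1\cdots x_d\neq 0\}$, exhibiting a dense open $\mathbb{A}^{2n-d}$ after a further Gr\"obner dimension count on the complement. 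You instead use a combinatorial stratification of $T^*V$ by supports to get the dimension, miracle flatness plus upper semicontinuity along the contracting $\C^*$-action for flatness, Serre's criterion ($S_1$ from Cohen--Macaulay, $R_0$ from a Jacobian rank argument) for reducedness, and the open torus chart $\tilde U \cong (\C^*)^n \times \ker A$ for irreducibility. Your translations of the hypothesis (no zero row of $B$ iff each $a_i \in \mathrm{Span}_\Q\{a_j : j \neq i\}$ iff $\operatorname{rank} A^{(i)} = d$) are correct. What your approach buys is a more conceptual, coordinate-free account that avoids Gr\"obner machinery and makes the role of each hypothesis geometrically transparent; the paper's approach buys a shorter and more self-contained computation, with radicality falling out for free from the monomial initial ideal rather than requiring Serre's criterion. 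Two points in your write-up deserve to be made explicit: the chain $2|I_\lambda| - \operatorname{rank}(A_{I_\lambda}) \le 2n-d-1$ needs both $|I_\lambda| \le n-1$ (else $\lambda$ kills all columns) and the rank subadditivity $d \le \operatorname{rank}(A_{I_\lambda}) + (n - |I_\lambda|)$; and the invocation of ``upper semicontinuity combined with $t \to 0$'' should be unpacked as: the function $p \mapsto \dim_p \mu^{-1}(\mu(p))$ on the source is upper semicontinuous and $\C^*$-invariant, hence bounded by its value at the contracting fixed point $0$.
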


\begin{proof}
The lexicographic ordering on a monomial $\underline{x}^{\alpha}$, $\alpha \in \Z^{2n}$, is defined by saying that $\underline{x}^{\alpha} > \underline{x}^{\beta}$ if and only if the left most non-zero entry of $\alpha - \beta$ is greater than zero, see \cite[page 54]{IVA}. After permuting the variables $x_1, \ds, x_n$, we may assume that the first $d$ columns of $A$ are linearly independent. Applying an automorphism of $\T$ and then letting $\T$ act is the same as multiplying $A$ on the right by some unimodular $d \times d$-matrix. Using this fact we may assume that the left most $d \times d$-block of $A$ is the identity matrix. This allows us to rewrite the generators of $I$ as
$$
\Bigl\{ x_i y_i - \sum_{j = d+1}^n c_{i,j} x_j y_j \ \Bigm| \ i = 1, \ds , d \Bigr\},
$$
where $c_{i,j} \in \Z$. By \cite[Theorem 8, page 451]{IVA}, $\dim \mu^{-1}(0) = \dim V( \mathrm{in} (I))$, where $\mathrm{in} (I)$ denotes the initial ideal of $I$ with respect to the ordering $x_1 > x_2 > \ds > y_1 > y_2 \ds$. Now by \cite[Theorem 3 (Division algorithm), page 61]{IVA}, we have $\mathrm{in} (I) = \langle x_1 y_1, \ds, x_d y_d \rangle$. This is the zero set of a union of $2^d$ linear subspaces of $T^* V$ of dimension $2n - d$. Therefore $\dim \mu^{-1}(0) = 2n - d$ and it follows from \cite[Lemma 2.3]{Holland} that the moment map is flat. To prove that it is a complete intersection we must show that the generators of $I$ given above form a regular sequence in the polynomial ring $R$. Once again, it suffices to note that $x_1 y_1, \ds, x_d y_d$ is a regular sequence. Also, since the ideal $\mathrm{in} (I)$ is radical, the ideal $I$ is itself radical.  

Now note that, since the sequence (\ref{eq:exact1}) is exact, the matrix $B$ contains a row of zeros if and only if there exists an $i \in [1,..,d]$ such that $c_{i,j} = 0$ for all $j > d$. So, when $B$ contains no rows equal to zero we can write
$$
y_i = x_i^{-1} \sum_{j = d+1}^n c_{i,j} x_j y_j \mod \ I
$$
on the open set $\mu^{-1}(0) \backslash V(x_1 \cdots x_d)$. This shows that $\mu^{-1}(0)$ contains an open set isomorphic to $\mathbb{A}^{2n - d}$. We just need to show that this open set is dense. Since $\mu^{-1}(0)$ is a complete intersection,  it is pure dimensional. Therefore it suffices to show that the dimension of $\mu^{-1}(0) \cap V(x_1 \cdots x_d)$ is at most $2n - d - 1$. Consider $Y = \mu^{-1}(0) \cap V(x_1)$ and let $J = I(Y)$. We may assume without loss of generality that $c_{1,d+1} \neq 0$. Then $J$ is generated by $x_1$, $x_i y_i - \sum_{j = d + 2}^n c_{i,j} x_{j} y_j$ for $j = 2, \ds, d$ and $x_{d+1} y_{d+1} + \sum_{j = d+2}^n c_{1,j} x_{j} y_j$. Hence $\mathrm{in} (J) = \langle x_1, x_2 y_2, \ds , x_{d+1} y_{d+1} \rangle$, which defines a variety of dimension $2n - d - 1$ as required. 
\end{proof}

From now on we assume that no row of the matrix $B$ is zero.  

\begin{lem}\label{lem:extdim}
For any $A$, we have $\dim X(A^{\pm},0) = 2n - d$.
\end{lem}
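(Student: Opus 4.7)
The plan is to realise $X(A^\pm,0) = \Spec R^\T$ explicitly as an affine toric variety and to read off its dimension from the standard formula. Since $R = \C[x_1,\ldots,x_n,y_1,\ldots,y_n]$ is spanned by monomials and the $\T$-weights of $x_i$ and $y_i$ are $a_i$ and $-a_i$ respectively, the invariant ring $R^\T$ is the semigroup algebra $\C[S]$ on
\[
S := \{ (\alpha,\beta) \in \Z_{\ge 0}^{2n} \mid A \alpha = A \beta \} \subseteq \Z^{2n}.
\]
It is a general fact about affine toric varieties that $\dim \Spec \C[S]$ equals the rank of the subgroup $\mathrm{gp}(S) \subseteq \Z^{2n}$ generated by $S$.

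Next I would identify $\mathrm{gp}(S) = \ker\bigl( A^{\pm} : \Z^{2n} \to \Z^d \bigr)$. The inclusion $\mathrm{gp}(S) \subseteq \ker A^\pm$ is immediate. For the reverse, given $(\alpha,\beta) \in \ker A^\pm$, choose an integer $N$ large enough that $\alpha + N \mbf{1}$ and $\beta + N \mbf{1}$ have non-negative entries; then $(\alpha + N\mbf{1},\,\beta + N\mbf{1})$ and $(N\mbf{1},\,N\mbf{1})$ both lie in $S$, and their difference is $(\alpha,\beta)$.

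Finally, $\mathrm{rank}(\ker A^\pm) = 2n - \mathrm{rank}(A^\pm) = 2n - d$, using that the standing coprimality assumption on the $d \times d$ minors of $A$ forces $A$, and hence $A^\pm = [A,-A]$, to be surjective onto $\Z^d$, so of rank $d$. Combining the three steps yields $\dim X(A^\pm,0) = 2n - d$, as claimed. The argument may alternatively be phrased geometrically: $\T$ acts on the irreducible affine variety $T^*V$ with trivial generic stabiliser (again by the coprimality of the maximal minors of $A$), so the generic orbit has dimension $d$, and the categorical quotient therefore has dimension $2n - d$. I do not anticipate any serious obstacle; the only point requiring mild care is the dimension formula for $\Spec \C[S]$, which is standard but worth either citing or briefly recalling.
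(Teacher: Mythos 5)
Your proof is correct but takes a genuinely different route from the paper's. The paper argues via function fields: it sets $F_1 = \Frac\,\C[(V\setminus V(x_1\cdots x_n))/\T]$ and $F_2 = \Frac\,\C[X(A^{\pm},0)]$, checks $F_1 \subset F_2$, and then by an explicit monomial-by-monomial induction shows that $F_2 = F_1(x_1y_1,\ldots,x_ny_n)$ is purely transcendental of degree $n$ over $F_1$; combined with $\dim U/\T = n-d$ this gives $2n-d$. You instead recognise $R^{\T}$ as the affine semigroup ring on $S = \Z_{\ge 0}^{2n} \cap \ker A^{\pm}$, invoke the standard identity $\dim \C[S] = \mathrm{rank}\,\mathrm{gp}(S)$, and identify $\mathrm{gp}(S) = \ker A^{\pm}$ of rank $2n-d$; your translation argument $(\alpha,\beta)=(\alpha+N\mbf{1},\beta+N\mbf{1})-(N\mbf{1},N\mbf{1})$ for the reverse inclusion is correct, and so is the deduction $\mathrm{rank}\,A^{\pm}=\mathrm{rank}\,A=d$ from the coprimality hypothesis. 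Your route is more modular and avoids the inductive bookkeeping, at the cost of citing a toric fact that the paper evidently chose not to assume. The alternative phrasing via the generic orbit is also sound here: for a torus acting on an irreducible affine variety one has $\Frac(\C[X]^{\T}) = \C(X)^{\T}$, so $\dim X//\T$ equals $\dim X$ minus the generic orbit dimension, which is exactly what the paper's field-theoretic computation establishes by hand; if you adopt that phrasing you should say this explicitly, since for a general reductive group the equality of the invariant field with the field of invariants is not automatic.
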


\begin{proof}
Let $U = V \backslash V(x_1 \cdots x_n)$ and let $S_1 = \C[x_1^{\pm 1}, \ds , x_n^{\pm 1}]^{\T}$ denote the coordinate ring of the quotient $U / \T$. Let $F_1$ be the field of fractions of $S_1$. Let $S_2 = \C[X(A^{\pm},0)]$ and $F_2$ its field of fractions. We claim that $F_1 \subset F_2$. An element in $F_1$ is a fraction $f(x_1, \ds, x_n) / g(x_1, \ds, x_n)$, where $f$ and $g$ are homogeneous of the same weight with respect to $\T$. Then $f(\mbf{x})f(\mbf{y}), g(\mbf{x})f(\mbf{y}) \in S_2$ and $f(\mbf{x})f(\mbf{y}) / g(\mbf{x})f(\mbf{y}) = f(\mbf{x}) / g(\mbf{x})$ as required. Since $\dim \T^n / \T = n - d$, to prove the lemma it suffices to show that the field extension $F_1 \subset F_2$ has transcendental degree $n$. Consider the field $K = F_1 \langle x_1y_1, \ds, x_n y_n \rangle$. Then $F_1 \subset K \subset F_2$ and $K$ is a purely transcendental extension of $F_1$ of degree $n$. We claim that $K = F_2$. To show this it is sufficient to show that if $f \in S_2$ is a polynomial in the $x_i$'s and $y_j$'s then $f \in K$. We show more generally that if $f = f_1 / g$, where $f_1, g \in S_2$ and $g$ a monomial, then $f \in K$. We prove the claim by induction on the number of terms in $f$ (note that even though there is some choice in the exact form of each of the terms in $f$, the number of terms is unique). Let $u = \alpha x^{\mbf{i}} y^{\mbf{j}}$, $\mbf{i}, \mbf{j} \in \Z^n$, be some non-zero term of $f$. Then $(xy)^{-\mbf{j}} u \in F_1$ and $(xy)^{-\mbf{j}} f - (xy)^{-\mbf{j}} u \in K$ by induction. Since $(xy)^{-\mbf{j}} \in K$, this implies that $f \in K$.
\end{proof}

Note that, unlike $X(A^{\pm},0)$, the dimension of $X(A,0)$ can vary greatly depending on the specific entries of $A$.

\begin{lem}\label{lem:extdim2}
For any $A$, we have $\dim Y(A,0) = 2(n-d)$ and $Y(A,0)$ is Cohen-Macaulay.
\end{lem}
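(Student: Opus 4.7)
The plan is to deduce both claims from Lemma \ref{prop:compint} together with two classical invariant-theoretic inputs. By Lemma \ref{prop:compint}, the ideal $I$ is generated by a regular sequence of length $d$ in the polynomial ring $R$, so $R/I$ is a reduced Cohen-Macaulay ring of Krull dimension $2n - d$. Since $\T$ is linearly reductive, the Hochster-Roberts theorem then yields that the coordinate ring $(R/I)^{\T}$ of $Y(A,0)$ is Cohen-Macaulay, which establishes the second assertion.

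For the dimension, it suffices to show that the $\T$-action on $\mu^{-1}(0)$ has trivial generic stabilizer: for a reductive action on an irreducible affine variety with trivial generic stabilizer, the categorical quotient has dimension $\dim \mu^{-1}(0) - \dim \T$. The assumption that the $d \times d$ minors of $A$ are relatively prime, made in Section \ref{sec:defineaction}, is equivalent to saying that the weights $a_1, \dots, a_n$ generate the character lattice $\X$; hence the common kernel of these characters, which is the $\T$-stabilizer of any point of the open torus $(\C^*)^n \subset V$, is trivial. The proof of Lemma \ref{prop:compint} realises a dense open subset of $\mu^{-1}(0)$ as the affine space with coordinates $x_1, \dots, x_n, y_{d+1}, \dots, y_n$, so the projection of this open cell to $V$ hits $(\C^*)^n$ on a dense open subset. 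We conclude that $\T$ acts freely on a dense open subset of $\mu^{-1}(0)$ and $\dim Y(A,0) = (2n - d) - d = 2(n - d)$.

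The step requiring the most care is verifying the trivial generic stabilizer on the (possibly singular) variety $\mu^{-1}(0)$; for this the explicit open cell produced in the proof of Lemma \ref{prop:compint} is essential. Once this is in hand, the dimension formula is immediate, and the Cohen-Macaulay property reduces to Hochster-Roberts applied to the regular-sequence quotient $R/I$.
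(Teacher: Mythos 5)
Your dimension computation is a correct and genuinely different route from the paper's. The paper obtains $\dim Y(A,0) = \dim X(A^{\pm},0) - d$ by showing $u_1, \ldots, u_d$ cut down dimension by $d$ in $R^{\T}$, and then invokes Lemma~\ref{lem:extdim}; you instead read off the generic freeness of the $\T$-action on $\mu^{-1}(0)$ from the explicit open cell in the proof of Lemma~\ref{prop:compint} together with the assumption that the $a_i$ generate $\X$, and use $\dim Y(A,0) = \dim \mu^{-1}(0) - \dim \T$. Both are valid; yours bypasses Lemma~\ref{lem:extdim}.

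The Cohen--Macaulay step, however, has a genuine gap. You apply the Hochster--Roberts theorem to the action of $\T$ on $R/I$, but Hochster--Roberts (and its variants) requires the ambient ring to be \emph{regular}: pure (equivalently, split) subrings of regular rings are Cohen--Macaulay, but pure subrings of merely Cohen--Macaulay --- or even of complete-intersection or Gorenstein --- rings need not be Cohen--Macaulay, and $R/I$ is not regular (indeed $\mu^{-1}(0)$ is usually singular). One could instead try to invoke Boutot's theorem, which says that invariants of a variety with rational singularities under a reductive group again have rational singularities and hence are Cohen--Macaulay; but that would require first proving that $\mu^{-1}(0)$ has rational singularities, which is not established in the paper and is not automatic for a reduced complete intersection. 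The paper avoids this by routing the argument through $R^{\T}$ rather than through $R/I$: Hochster's theorem (Bruns--Herzog 6.4.2, for normal affine semigroup rings) shows $R^{\T} = \C[X(A^{\pm},0)]$ is Cohen--Macaulay; the Reynolds operator shows the \emph{invariant} regular sequence $u_1, \ldots, u_d \in R$ remains a regular sequence in the direct summand $R^{\T}$ (Bruns--Herzog 6.4.4); and then $(R/I)^{\T} = R^{\T}/(u_1,\ldots,u_d)R^{\T}$ is Cohen--Macaulay since it is a quotient of a Cohen--Macaulay ring by a regular sequence. Without either that detour or a separate proof that $\mu^{-1}(0)$ has rational singularities, the Cohen--Macaulay assertion is not justified by your argument.
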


\begin{proof}
By Hochster's Theorem, \cite[Theorem 6.4.2]{CohMac}, the ring $\C[X(A^{\pm},0)]$ is Cohen-Macaulay. As noted in Lemma \ref{prop:compint}, the generators $u_1, \ds, u_d$ of $I$ form a regular sequence in $R$. Since $\T$ is reductive, $R = \C[X(A^{\pm},0)] \oplus E$ as a $\C[X(A^{\pm},0)]$-module. Therefore projection from $R$ to $\C[X(A^{\pm},0)]$ is a Reynolds operator in the sense of \cite[page 270]{CohMac}. Since $u_1, \ds, u_d$ are $\T$-invariant, \cite[Proposition 6.4.4]{CohMac} now says that they form a regular sequence in $X(A^{\pm},0)$. Therefore \cite[Theorem 2.1.3]{CohMac} says that $Y(A,0)$ is Cohen-Macaulay with $\dim Y(A,0) = \dim X(A^{\pm},0) - d$. The lemma follows from Lemma \ref{lem:extdim}.
\end{proof}

\begin{lem}
Let $\delta \in \X_{\Q}$. The graded ring $\bigoplus_{k \ge 0 } \, ( R / I )^{k \delta}$ is Cohen-Macaulay i.e. $Y(A,\delta)$ is arithmetically Cohen-Macaulay.
\end{lem}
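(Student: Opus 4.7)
The plan is to mimic the proof of Lemma \ref{lem:extdim2}, but with an extra formal variable $t$ whose weight encodes the character $\delta$. Specifically, introduce $\tilde{R} := R[t]$ and extend the $\T$-action on $R$ by setting $g \cdot t = \delta(g)^{-1} t$ for $g \in \T$, exactly as in the proof of Lemma \ref{lem:unstable}. Then by construction one has a graded isomorphism
$$
\tilde{R}^{\T} = \bigoplus_{k \ge 0} R^{k\delta}\, t^k \quad \text{and} \quad (\tilde{R}/I\tilde{R})^{\T} = \bigoplus_{k \ge 0} (R/I)^{k\delta}\, t^k,
$$
so it suffices to show that $(\tilde{R}/I\tilde{R})^{\T}$ is Cohen-Macaulay.

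The first step is to observe that $\tilde{R}^{\T}$ is Cohen-Macaulay. Since $\tilde{R}$ is a polynomial ring on which the torus $\T$ acts linearly, the ring of invariants $\tilde{R}^{\T}$ is an affine semigroup algebra and therefore Cohen-Macaulay by Hochster's theorem \cite[Theorem 6.4.2]{CohMac}, exactly as in the proof of Lemma \ref{lem:extdim2}.

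Next, I would verify that the generators $u_1, \ldots, u_d$ of $I$ still form a regular sequence in $\tilde{R}^{\T}$. Since the $u_j$ do not involve $t$ and already form a regular sequence in $R$ by Lemma \ref{prop:compint}, they form a regular sequence in $\tilde{R} = R[t]$ (polynomial extensions preserve regular sequences). Because $\T$ is reductive and the $u_j$ are $\T$-invariant, $\tilde{R} = \tilde{R}^{\T} \oplus \tilde{E}$ as a $\tilde{R}^{\T}$-module with a Reynolds operator, and \cite[Proposition 6.4.4]{CohMac} implies that $u_1, \ldots, u_d$ form a regular sequence in $\tilde{R}^{\T}$ as well.

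Finally, by reductivity of $\T$ the Reynolds operator gives $(\tilde{R}/I\tilde{R})^{\T} = \tilde{R}^{\T}/(u_1, \ldots, u_d)\tilde{R}^{\T}$, so this ring is the quotient of a Cohen-Macaulay ring by a regular sequence, hence Cohen-Macaulay by \cite[Theorem 2.1.3]{CohMac}. This gives the desired conclusion that $\bigoplus_{k \ge 0}(R/I)^{k\delta}$ is Cohen-Macaulay. No step is really an obstacle here: the whole argument is a direct adaptation of Lemma \ref{lem:extdim2}, with the only bookkeeping point being to keep track that the $\T$-action on the extra variable $t$ does not interfere with the regularity of the sequence $u_1, \ldots, u_d$, which is automatic since none of the $u_j$ involve $t$.
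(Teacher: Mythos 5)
Your proposal is correct and matches the paper's proof exactly: the paper's argument is literally ``Consider $S = R[t]$ with $\T$ acting on $t$ via $g \cdot t = \delta(g)^{-1} t$, and replace $R$ with $S$ in the proof of Lemma \ref{lem:extdim2},'' which is precisely the substitution you carry out and spell out in detail.
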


\begin{proof}
Consider $S = R[t]$ with $\T$ acting on $t$ via $g \cdot t = \delta(g)^{-1} t$. Replacing $R$ with $S$ in the proof of Lemma \ref{lem:extdim2} gives a proof of the statement. 
\end{proof}

\begin{prop}\label{prop:symplecticvariety}
Let $A$ be unimodular and choose $\delta$ in the interior of a $d$-cone of $\Delta(\T,\mu^{-1}(0))$. Then $Y(A,0)$ is a symplectic variety and the morphism $f : Y(A,\delta) \ra Y(A,0)$ is a symplectic resolution. 
\end{prop}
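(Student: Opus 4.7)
The plan is to verify the three ingredients in turn: (a) $Y(A,\delta)$ is a smooth symplectic manifold, (b) $Y(A,0)$ is normal and $f$ is a resolution of singularities, and (c) the symplectic form on $Y(A,\delta)$ is the pullback of a form defined on the smooth locus of $Y(A,0)$.

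First I would invoke Proposition \ref{prop:smoothres}: since $A$ is unimodular and $\delta$ is in the interior of a $d$-cone of $\Delta(\T,\mu^{-1}(0))$, the hypertoric variety $Y(A,\delta)$ is smooth. Under these same hypotheses every $\delta$-semi-stable point of $\mu^{-1}(0)$ is in fact $\delta$-stable (because the interior of a $d$-cone contains no walls), and by unimodularity the stabilizer of every stable point is trivial. Thus $\T$ acts freely on $\mu^{-1}(0)^{\mathrm{ss}}_\delta$, so Marsden--Weinstein reduction applies: the canonical symplectic form $\omega$ on $T^* V$, restricted to $\mu^{-1}(0)^{\mathrm{ss}}_\delta$ (which is smooth since $\mu$ is a submersion there by Lemma \ref{prop:compint}), descends to a symplectic form $\omega_\delta$ on $Y(A,\delta)$.

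Next, $f: Y(A,\delta) \to Y(A,0)$ is projective by Definition \ref{defn:hypertoric}. Lemma \ref{lem:extdim2} gives $\dim Y(A,\delta) = \dim Y(A,0) = 2(n-d)$, and both varieties contain, as a dense open subset, the common image of the locus of closed free $\T$-orbits in $\mu^{-1}(0)$; hence $f$ is birational. For normality of $Y(A,0)$, Lemma \ref{lem:extdim2} already gives Cohen--Macaulay, hence Serre's $S_2$. To verify $R_1$ one checks that the locus in $\mu^{-1}(0)$ where $\T$ fails to act freely or where $\mu$ fails to be a submersion has codimension at least two; projecting to the affine quotient via the reductive projection $\mathbb{C}[\mu^{-1}(0)] \twoheadrightarrow \mathbb{C}[\mu^{-1}(0)]^{\T}$ and using unimodularity shows that $Y(A,0)^{\mathrm{sing}}$ has codimension $\geq 2$ in $Y(A,0)$. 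Thus $Y(A,0)$ is normal, and since $Y(A,\delta)$ is smooth, $f$ is a resolution of singularities.

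It remains to assemble the symplectic resolution. Let $U \subset Y(A,\delta)$ be the maximal open on which $f$ is an isomorphism; its image $f(U)$ is a dense open in $Y(A,0)^{\mathrm{sm}}$. The form $\omega_\delta|_U$ transfers via $(f|_U)^{-1}$ to a symplectic form $\omega_0$ on $f(U)$. Since $Y(A,0)$ is normal and the complement of $f(U)$ in $Y(A,0)^{\mathrm{sm}}$ has codimension $\geq 2$, $\omega_0$ extends by Hartogs to a holomorphic $2$-form on all of $Y(A,0)^{\mathrm{sm}}$; its pullback $f^*\omega_0$ agrees with $\omega_\delta$ on the dense open $U$, hence everywhere on $Y(A,\delta)$ by continuity. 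This exhibits $Y(A,0)$ as a symplectic variety in the sense of Beauville (normal, with symplectic form on the smooth locus extending holomorphically to a resolution) and identifies $f$ as a symplectic resolution.

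The main obstacle is the codimension estimate needed for $R_1$ on $Y(A,0)$: one must control the non-free/non-submersion locus of $\mu$ on $\mu^{-1}(0)$ in terms of the combinatorics of $A$ and show it remains codimension $\geq 2$ after taking $\T$-invariants. This is where unimodularity enters crucially; everything else is standard symplectic reduction and Serre-criterion bookkeeping.
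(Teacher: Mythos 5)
Your argument is correct in outline but takes a genuinely different route from the paper's at the two nontrivial steps, and you have correctly flagged where your version is softer. For normality, the paper also starts from Cohen--Macaulayness (Lemma \ref{lem:extdim2}) and Serre's criterion, but obtains $R_1$ by citing the stratification of $Y(A,0)$ into smooth locally closed subvarieties of even dimension constructed in \cite[\S 2]{ProudWeb}, rather than by a hands-on codimension estimate of the non-free/non-submersion locus of $\mu$ and its $\T$-invariant image; your self-identified ``main obstacle'' is exactly what that citation buys, and as written your $R_1$ step is still a sketch. For the symplectic-variety conclusion, the paper proceeds much less directly: from the smooth symplectic structure on $Y(A,\delta)$ it deduces triviality of the canonical bundle, invokes Grauert--Riemenschneider vanishing to conclude that $Y(A,0)$ has rational Gorenstein singularities, and then applies Namikawa's extension theorem \cite[Theorem 6]{NamikawaExtensions}. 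You instead verify Beauville's definition directly on the resolution $f$: the Marsden--Weinstein form $\omega_\delta$ is already a global holomorphic $2$-form on $Y(A,\delta)$ whose restriction to $f^{-1}(Y(A,0)^{\mathrm{sm}})$ equals $f^*\omega_0$, which exhibits the required extension over the exceptional locus and avoids Grauert--Riemenschneider and Namikawa entirely. (One small imprecision: ``$f^*\omega_0$ agrees with $\omega_\delta$ on $U$, hence everywhere on $Y(A,\delta)$ by continuity'' should be phrased the other way around --- $f^*\omega_0$ is only defined over $Y(A,0)^{\mathrm{sm}}$, and what you are using is that $\omega_\delta$ is the global form extending it.) Both routes are valid; yours is more elementary in the extension step, while the paper's normality step is cleaner because it delegates the codimension computation to a reference.
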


\begin{proof}
The construction of $Y(A,\delta)$ and $Y(A,0)$ as Hamiltonian reductions means that they are Poisson varieties and $f$ preserves the Poisson structure. Therefore the smooth locus of $Y(A,0)$ is a symplectic manifold since $Y(A,\delta)$ is a symplectic manifold. In \cite[\S 2]{ProudWeb}, a stratification of $Y(A,0)$ into smooth locally closed subvarieties of even dimensions is constructed. This stratification shows that $Y(A,0)$ is smooth in co-dimension one. Therefore the fact (Lemma \ref{lem:extdim2}) that $Y(A,0)$ is Cohen-Macaulay together with Serre's normality criterion, \cite[Theorem 2.2.22]{CohMac} implies that $Y(A,0)$ is normal. Also, the fact that $Y(A,\delta)$ is a symplectic manifold implies that its canonical bundle is trivial. Therefore the Grauert-Riemenschneider vanishing theorem implies that $Y(A,0)$ has rational Gorenstein singularities. Then \cite[Theorem 6]{NamikawaExtensions} says that $Y(A,0)$ is a symplectic variety. 
\end{proof}

\subsection{G.I.T. chambers for hypertoric varieties} Define the subvariety $\mc{E}$ of $T^* V$ by $\mc{E} = \{ (x,y) \in T^* V \, | \, x_i \cdot y_i = 0 \, \forall \, i \in [1,n] \}$. We decompose $\mc{E}$ into its $n$-dimensional irreducible components
$$
\mc{E} = \bigcup_{I \subset [1,n]} \mc{E}_I; \qquad \mc{E}_I := \{ (x,y) \in T^* V \, | \, x_i = 0 \;\; \forall \, i \in I \textrm{ and } y_i = 0  \;\; \forall \, i \in [1,n] \backslash I \}.
$$
The subvariety $\mc{E}$ is preserved under the $\T$-action. Therefore we may consider the corresponding G.I.T. quotients. The G.I.T. quotient $\mc{E} /\!/_\delta \, \T$ is a closed subvariety of $Y(A,\delta)$, it is called the \textit{extended core} of $Y(A,\delta)$; see \cite{Proudfoot} for details.

\begin{lem}\label{lem:equalcones}
In $\X_\Q$ we have equalities of G.I.T. fans
$$
\Delta(\T,T^* V) = \Delta(\T,\mc{E}) = \Delta(\T,\mu^{-1}(0)).
$$
\end{lem}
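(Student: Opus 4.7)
The proof reduces cleanly once one observes that $\mc{E} \subset \mu^{-1}(0) \subset T^* V$ is a chain of $\T$-invariant closed subvarieties. Semistability is inherited by restriction, so
\[
\mc{E}^{\mathrm{ss}}_\delta = \mc{E} \cap (T^* V)^{\mathrm{ss}}_\delta, \qquad \mu^{-1}(0)^{\mathrm{ss}}_\delta = \mu^{-1}(0) \cap (T^* V)^{\mathrm{ss}}_\delta.
\]
Consequently $\Delta(\T, T^* V)$ automatically refines both of the other two fans, and it suffices to prove $\Delta(\T, T^* V) = \Delta(\T, \mc{E})$; the fan for $\mu^{-1}(0)$ is then sandwiched between two equal fans and must equal them too.

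My first step is to record the semistability criterion for $T^* V$ obtained from the Hilbert--Mumford argument of Lemma \ref{lem:unstable}. The coordinates $x_i, y_i$ on $T^* V$ carry $\T$-weights $a_i, -a_i$, and the same analysis used in the proof of Lemma \ref{lem:walls} yields: a point $(\mbf{x}, \mbf{y})$ with $I^+ := \{i : x_i \neq 0\}$ and $I^- := \{i : y_i \neq 0\}$ is $\delta$-semistable if and only if
\[
\delta \in C(I^+, I^-) := \sum_{i \in I^+} \Q_{\ge 0} \cdot a_i + \sum_{i \in I^-} \Q_{\ge 0} \cdot (-a_i).
\]

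My second step is an elementary combinatorial reduction: if $\delta \in C(I^+, I^-)$ for some pair, then there is a partition $I^+ \cap I^- = J_1 \sqcup J_2$ with $\delta \in C(I^+ \setminus J_2, I^- \setminus J_1)$, and the new pair is disjoint. Indeed, writing $\delta = \sum_{i \in I^+} \alpha_i a_i - \sum_{i \in I^-} \beta_i a_i$ with $\alpha_i, \beta_i \ge 0$, I would cancel $\min(\alpha_i, \beta_i)(a_i - a_i) = 0$ on each overlap index $i$, placing $i$ into $J_1$ if $\alpha_i \ge \beta_i$ and into $J_2$ otherwise.

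Since the points of $\mc{E}$ are precisely those with disjoint supports $(I^+, I^-)$, the reduction above shows that the collection of all pairs $(I^+, I^-)$ with $\delta \in C(I^+, I^-)$ is determined by its subcollection of disjoint pairs. In particular $\mc{E}^{\mathrm{ss}}_\delta$ determines $(T^* V)^{\mathrm{ss}}_\delta$, so the equivalence classes (and supports) of $\Delta(\T, \mc{E})$ and $\Delta(\T, T^* V)$ coincide, and the fans are equal; the sandwich then gives the same for $\mu^{-1}(0)$. I regard the combinatorial reduction of step two as the main (and only) point of substance in the argument, but it amounts to routine bookkeeping of conic combinations of the $a_i$, so no real obstacle is expected.
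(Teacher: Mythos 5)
Your proposal is correct and essentially the same argument as the paper's. The key step in both is the same cancellation: overlapping indices $i$ with both $x_i$ and $y_i$ in play can be removed, reducing any semistable point of $T^*V$ (or $\mu^{-1}(0)$) to a semistable point on one of the components $\mc{E}_I$; you phrase this through the Hilbert--Mumford cone $C(I^+,I^-)$ coming from Lemma \ref{lem:unstable}/\ref{lem:walls}, while the paper works directly with semi-invariant monomials $g=\prod x_i^{u_i}\prod y_i^{v_i}$ and replaces $(u_i,v_i)$ with $(\max(u_i-v_i,0),\max(v_i-u_i,0))$, which is the same cancellation. Your preliminary observation that $Y^{\mathrm{ss}}_\delta = Y\cap X^{\mathrm{ss}}_\delta$ for a $\T$-invariant closed subvariety $Y\subset X$ (using reductivity of $\T$ so that $\C[X]^{n\delta}\twoheadrightarrow\C[Y]^{n\delta}$) and the resulting sandwich argument for $\mu^{-1}(0)$ are fine and give a slightly cleaner packaging than the paper's "the second equality follows from the first one."
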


\begin{proof}
For a fixed $I \subset [1,n]$ denote by $\pi_I \, : \, T^* V \twoheadrightarrow \mc{E}_I$ the projection that sends $x_i$ to zero if $i \in I$ and $y_j$ to zero if $j \in [1,n] \backslash I$. The restriction of $\pi_I$ to $\mu^{-1}(0)$ will be denoted $\widetilde{\pi}_I$. The statement of lemma follows from the claim:
\beq
\label{eq:412}
(T^* V)^{\mathrm{ss}}_{\delta} = \bigcup_{I \subset [1,n]} \pi_I^{-1}((\mc{E}_I)^{\mathrm{ss}}_{\delta}) \quad \textrm{ and } \quad (\mu^{-1}(0))^{\mathrm{ss}}_{\delta} = \bigcup_{I \subset [1,n]} (\widetilde{\pi}_I)^{-1}((\mc{E}_I)^{\mathrm{ss}}_{\delta})
\eeq
for each $\delta \in \X$. Let $p \in (\mc{E}_I)^{\mathrm{ss}}_{\delta}$. Then, without loss of generality, we may assume that there exists a monomial $f \in R^{N \delta}$, $N \ge 1$, such that $f(p) \neq 0$. Then $f(q) \neq 0$ for all $q \in \pi^{-1}_I(p)$. Hence $(T^* V)^{\mathrm{ss}}_{\delta} \supset \pi_I^{-1}((\mc{E}_I)^{\mathrm{ss}}_{\delta})$ and $(\mu^{-1}(0))^{\mathrm{ss}}_{\delta} \supset (\widetilde{\pi}_I)^{-1}((\mc{E}_I)^{\mathrm{ss}}_{\delta})$ for all $I \subset [1,n]$. Now choose $p \in (T^* V)^{\mathrm{ss}}_{\delta}$. Then there exist $m \in \N$ and $g \in R^{m \delta}$ such that $g(p) \neq 0$. We may assume without loss of generality that
$$
g = \prod_i x_i^{u_i} \prod_i y_i^{v_i}
$$
for some $u_i,v_i \ge 0$. By definition, $\sum_{i} (u_i - v_i) a_i = m \delta$. For each $i$, define $s_i$ and $t_i$ by
\begin{enumerate}
\item $u_i - v_i > 0 \Rightarrow s_i = u_i - v_i, t_i = 0$
\item $u_i - v_i < 0 \Rightarrow t_i = v_i - u_i, s_i = 0$
\item $u_i - v_i = 0 \Rightarrow s_i = t_i = 0$
\end{enumerate}
and $I = \{ i \in [1,n] \, | \, t_i \neq 0 \}$. Then $g(p) \neq 0$ implies that $\pi_I(p) \neq 0$. Define $\tilde{g} = \prod_i x_i^{s_i} \prod_i y_i^{t_i} \in R^{m \delta}$. Then $\tilde{g}(\pi_I(p)) \neq 0$ implies that $p \in \pi_I^{-1}((\mc{E}_I)^{\mathrm{ss}}_{\delta})$ and hence $(T^* V)^{\mathrm{ss}}_{\delta} = \bigcup_{I \subset [1,n]} \pi_I^{-1}((\mc{E}_I)^{\mathrm{ss}}_{\delta})$ as required. 
The second equality in (\ref{eq:412}) follows from the first one.
\end{proof}

\begin{cor}\label{cor:hypertoricwalls}
Let $\T$ act on $V$ via $A$ as in (\ref{sec:defineaction}). Then $|\Delta(\T,\mu^{-1}(0))| = \X_\Q$ and the walls of the fan $\Delta(\T,\mu^{-1}(0))$ are $\sum_{i \in J} \Q \cdot a_i$, where $J \subset [1,n]$ is any subset such that $\dim_{\Q} (\textrm{Span}_{\Q} ( a_i \, | \, i \in J )) = d - 1$.
\end{cor}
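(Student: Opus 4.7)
The plan is to reduce Corollary \ref{cor:hypertoricwalls} to a direct application of Lemma \ref{lem:walls} applied to the cotangent bundle $T^* V$, using Lemma \ref{lem:equalcones} to transport the conclusion from $T^*V$ to $\mu^{-1}(0)$. By Lemma \ref{lem:equalcones}, we have the equality of G.I.T.\ fans $\Delta(\T,\mu^{-1}(0)) = \Delta(\T, T^* V)$, so both claims need only be verified for the ambient symplectic vector space $T^* V$.

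To compute $\Delta(\T, T^* V)$, observe that $T^* V$ is a $(2n)$-dimensional vector space and the $\T$-action on it is encoded by the $d \times 2n$ matrix $A^{\pm} = [A, -A]$, whose columns are $a_1, \ldots, a_n, -a_1, \ldots, -a_n$. Applying Lemma \ref{lem:walls} to this setup immediately gives
\[
|\Delta(\T, T^* V)| \; = \; \sum_{i=1}^n \Q_{\ge 0} \cdot a_i \; + \; \sum_{i=1}^n \Q_{\ge 0} \cdot (-a_i) \; = \; \sum_{i=1}^n \Q \cdot a_i.
\]
Since we assumed in (\ref{sec:defineaction}) that the $d \times d$ minors of $A$ are relatively prime, the columns $a_1, \ldots, a_n$ span $\X_\Q$, so this sum is all of $\X_\Q$, proving the first assertion.

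For the walls, Lemma \ref{lem:walls} describes them as $\sum_{i \in K} \Q_{\ge 0} \cdot w_i$ where $w_1, \ldots, w_{2n} = a_1, \ldots, a_n, -a_1, \ldots, -a_n$ are the weights on $T^* V$ and $K \subset [1, 2n]$ is any subset whose corresponding weights have $\Q$-span of dimension exactly $d-1$. The key observation is that whether an index $i$ or its ``opposite'' $i + n$ lies in $K$ does not change the spanning condition, since $\Q \cdot a_i = \Q \cdot (-a_i)$. Thus every such wall can be rewritten, in terms of a subset $J \subset [1,n]$ of the original index set, as
\[
\sum_{i \in J} \bigl( \Q_{\ge 0} \cdot a_i + \Q_{\ge 0} \cdot (-a_i) \bigr) \; = \; \sum_{i \in J} \Q \cdot a_i,
\]
where $J$ is any subset satisfying $\dim_\Q \mathrm{Span}_\Q(a_i \mid i \in J) = d - 1$, as claimed.

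There is no serious obstacle here; the only subtlety is to see that passing from the $\Q_{\ge 0}$-cones in Lemma \ref{lem:walls} to the $\Q$-subspaces asserted in the corollary is precisely the effect of adjoining the opposite weights $-a_i$ present in $A^{\pm}$ but absent from $A$, and that Lemma \ref{lem:equalcones} lets us identify $\Delta(\T,T^*V)$ with $\Delta(\T, \mu^{-1}(0))$ despite the latter not being a vector space.
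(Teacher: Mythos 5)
Your argument is correct and is exactly the (unwritten) proof the paper intends: Lemma \ref{lem:equalcones} reduces $\Delta(\T,\mu^{-1}(0))$ to $\Delta(\T,T^*V)$, and then one reads off the support and walls from Lemma \ref{lem:walls} applied to the matrix $A^{\pm}=[A,-A]$, the extra sign columns being precisely what upgrades the half-cones $\sum_J \Q_{\ge 0}\cdot a_i$ of Lemma \ref{lem:walls} to the subspaces $\sum_J \Q\cdot a_i$ in the corollary.
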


Assume now that $A$ is unimodular and choose $\delta \in \X$ to lie in the interior, denoted $C(\delta)$, of a $d$-cone of $\Delta(\T,\mu^{-1}(0))$. If $\zeta \in C(\delta) \cap \X$ then $Y(A,\delta) = Y(A,\zeta)$. Recall from (\ref{sec:affinity}) that $\zeta$ also defines a line bundle $L_\zeta$ on $Y(A,\delta)$. From the definition of $Y(A,\delta)$ as proj of a graded ring we see that $L_\zeta$ is an ample line bundle on $Y(A,\delta)$. Summarizing:

\begin{lem}\label{lem:amplebundle}
Let $A$ be unimodular and let $C(\delta)$ denote the interior of a $d$-cone of $\Delta(\T,\mu^{-1}(0))$. Then the line bundle $L_\zeta$ on $Y(A,\delta)$ is ample for all $\zeta \in C(\delta) \cap \X$.
\end{lem}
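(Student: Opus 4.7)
The plan is to identify $L_\zeta$ with the tautological line bundle on $Y(A,\delta)$ under its presentation as a Proj, and then invoke standard ampleness results for Proj.

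First, since $\zeta$ lies in the interior of the same GIT chamber $C(\delta)$ as $\delta$, the corresponding semistable loci in $\mu^{-1}(0)$ agree, so by Definition \ref{defn:hypertoric} there is a canonical identification
\[
Y(A,\delta) = Y(A,\zeta) = \mathrm{Proj} \bigoplus_{k \ge 0} (R/I)^{k\zeta},
\]
which is projective over the affine scheme $Y(A,0)$.

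Next, I would check that the descent line bundle $L_\zeta$ defined in Section \ref{sec:defineA} coincides with the tautological sheaf $\mathcal{O}_{Y(A,\zeta)}(1)$ from this Proj. Since $\T$ acts freely on $\mu^{-1}_{\Xo}(0)$ (assumption (ii) of Section \ref{sec:affinity}), the quotient map $p : \mu^{-1}_{\Xo}(0) \to Y(A,\zeta)$ is a principal $\T$-bundle, and $L_\zeta$ is by construction the line bundle associated to the character $\zeta$. Its graded algebra of global sections is then tautologically $\bigoplus_{k \ge 0} (R/I)^{k\zeta}$, matching the description of $\mathcal{O}_{Y(A,\zeta)}(1)$ coming from the Proj.

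Finally, since $\bigoplus_{k \ge 0}(R/I)^{k\zeta}$ is a finitely generated graded $\C$-algebra, standard Proj theory provides an integer $m > 0$ such that $L_\zeta^{\otimes m} \simeq \mathcal{O}(m)$ is very ample on $Y(A,\zeta)$ relative to the structure morphism $f : Y(A,\zeta) \to Y(A,0)$. Because $Y(A,0)$ is affine, relative very ampleness promotes to absolute very ampleness, and hence $L_\zeta$ is ample. The main point to verify carefully is the line-bundle identification in the second step, but this is the content of standard GIT descent for a torus acting freely on the semistable locus.
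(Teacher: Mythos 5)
Your argument is correct and follows essentially the same route as the paper: the paper's (very terse) justification consists of the observation that $Y(A,\delta) = Y(A,\zeta)$ because $\zeta$ lies in the same GIT chamber, and that $L_\zeta$ is then the twisting sheaf from the presentation of $Y(A,\zeta)$ as $\mathrm{Proj}\,\bigoplus_{k\ge 0}(R/I)^{k\zeta}$, hence ample. You have simply filled in the two steps the paper leaves implicit (the GIT-descent identification of $L_\zeta$ with $\mathcal{O}(1)$, and the passage from relative very ampleness of a Veronese power over the affine base $Y(A,0)$ to absolute ampleness), both of which are correct.
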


\section{Quantum Hamiltonian reduction}\label{sec:qhr}

\subsection{}\label{sec:QHRone} Recall that $\algD (V)$ denotes the ring of algebraic differential operators on the $n$-dimensional space $V$. Let $\T$ act on $V$ with weights described by the matrix $A$ (as in sections \ref{sec:defineaction} and \ref{sec:4.3}) and choose an element $\chi$ of the dual $\mf{t}^*$ of the Lie algebra $\mf{t}$ of $\T$. As explained in (\ref{subsection:defineqmm}), by differentiating the action of $\T$ we get a quantum moment map $\mu_D \, : \, \mf{t} \longrightarrow \algD (V)$, $t_i \mapsto \sum_{j = 1}^n a_{ij} x_j \p_j$. As in (\ref{sec:MQHR}), the quantum Hamiltonian reduction of $V$ with respect to $\chi$ is defined to be the non-commutative algebra
$$
\U_\chi := \left( \algD (V) \bigm/  \algD(V) (\mu_D - \chi)(\mf{t}) \right)^{\T}.
$$
We also have bimodules
$$
\U_{\chi}^\theta := \left( \algD (V) \bigm/ \algD (V) (\mu_D - (\chi + \theta))(\mf{t}) \right)^{-\theta}.
$$
We say that $\chi$ and $\chi + \theta$ are comparable if the multiplication map $\U_{\chi}^\theta \otimes \U_{\chi + \theta}^{-\theta} \longrightarrow \U_\chi$ is non-zero. By \cite[Theorem 7.3.1]{MVdB}, the ring $\U_{\chi}$ is a domain. Then \cite[Proposition 4.4.2]{MVdB} says that this implies that comparability is an equivalence relation. As in (\ref{sec:MQHR}), write $\chi \rightarrow \chi + \theta$ if $\U_{\chi + \theta}^{-\theta} \otimes \U_{\chi}^\theta \twoheadrightarrow \U_{\chi + \theta}$. As noted in \cite[Remark 4.4.3]{MVdB}, the relation $\rightarrow$ is transitive. Therefore it defines a pre-order on the set of elements in $\mf{t}^*$ comparable to $\chi$. We say that $\chi$ is \textit{maximal} if $\chi$ is maximal in this pre-ordering i.e. $\chi' \rightarrow \chi$ implies $\chi \rightarrow \chi'$.  

\subsection{The main results}

Write $\msf{pr} \, : \, \C \rightarrow \Q$ for the $\Q$-linear projection onto $\Q$ and denote by the same symbol the corresponding extension to $\mf{t}^*$:
$$
\msf{pr} \, : \, \mf{t}^* = \X \otimes_{\Z} \C \longrightarrow \X_{\Q}.
$$
We also write $\msf{pr}$ for the map $\C^n = \C \otimes_{\Z} \Z^n \rightarrow \Q^n$. Then $\msf{pr}(A \cdot v) = A \cdot \msf{pr}(v)$ for all $v \in \C^n$. The following proposition is the key to proving our main result. Its proof is given in subsection (\ref{sec:proofprop}).

\begin{prop}\label{prop:shiftingcone}
Let $C \subset \X_{\Q}$ be the interior of a $d$-cone in the fan $\Delta(\T,\mu^{-1}(0))$. Choose $\chi \in \mf{t}^*$ such that $\msf{pr} (\chi) \in C$. Then there exists a \textit{non-empty} $d$-dimensional integral cone $C(\chi) \subset C \cap \X \cup \{ 0 \}$ such that for all $\theta \in C(\chi)$, $\chi \leftarrow \chi + p \theta$ for all $p \in \Z_{\ge 0}$. 
\end{prop}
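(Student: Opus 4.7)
The plan is to reduce the proposition to the combinatorial analysis of shifted toric quantum Hamiltonian reductions carried out by Musson and Van den Bergh in \cite{MVdB}. Their work provides, in terms of the matrix $A$ and the parameter $\chi$, an explicit criterion for when the multiplication map (\ref{eq:timestwo}) is surjective: the obstruction is governed by a countable union $\mc{H}$ of affine hyperplanes in $\mf{t}^*$, arising from integrality conditions tied to the walls of the fan $\Delta(\T,\mu^{-1}(0))$. Away from $\mc{H}$, surjectivity of (\ref{eq:timestwo}) holds for $\theta$ in the chamber $C$ containing $\msf{pr}(\chi)$.

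Granted this, I would first exhibit, for the given $\chi$, a rational polyhedral subcone $C(\chi)$ of $C$ such that for every integral $\theta \in C(\chi) \setminus \{0\}$ and every $p \in \Z_{\ge 0}$, the translate $\chi + p\theta$ avoids $\mc{H}$. Each bad hyperplane $H \in \mc{H}$, defined by some linear equation $\ell_H(x) = c_H$, meets the ray $\chi + \Z_{\ge 0}\theta$ exactly when $p\, \ell_H(\theta) = c_H - \ell_H(\chi)$, a single Diophantine equation in $p$. Controlling these equations for $\theta$ in a rational subcone of $C$ amounts to avoiding a countable family of rational slope conditions; since $C$ is open and $d$-dimensional, one can exhibit a full $d$-dimensional integral cone $C(\chi) \subset (C \cap \X) \cup \{0\}$, with $C(\chi) \setminus \{0\} \subset C$, on which none of these conditions has a non-negative integer solution $p$.

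Given $C(\chi)$, I would then apply the Musson--Van den Bergh criterion to conclude that $\chi \leftarrow \chi + p\theta$ for every $\theta \in C(\chi)$ and every $p \in \Z_{\ge 0}$, which is precisely the statement of the proposition. The role of Lemma \ref{lem:amplebundle} here is to guarantee that the line bundle $L_\theta$ on $Y(A,\delta)$ is ample for $\theta \in C \cap \X$, providing the classical geometric mechanism for the required surjectivity on associated graded level; the quantum lift is then furnished by the criterion of \cite{MVdB} in combination with the completeness of the filtration on $\L_{\chi}$ recorded in Lemma \ref{lem:eHc}.

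The main obstacle is the combinatorial step of constructing $C(\chi)$: one must verify that, as $\theta$ ranges over integral points of $C$, the countable family of Diophantine obstructions coming from $\mc{H}$ excludes at most a thin set of integral directions, leaving a full $d$-dimensional cone of admissible $\theta$. This is the technically delicate part of the argument, requiring careful control of the integrality structure of $\mc{H}$ against the cone structure of $C$ relative to the character lattice $\X$.
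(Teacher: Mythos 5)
Your proposal misreads the structure of the Musson--Van den Bergh criterion, and this misreading is fatal to the plan. The criterion recalled in the paper (Theorem 4.4.4, Proposition 7.7.1 of \cite{MVdB}, reformulated here in Lemma \ref{lem:newprimitive} for unimodular $A$) is \emph{not} of the form ``$\chi \leftarrow \chi'$ holds whenever $\chi$ avoids a countable union of affine hyperplanes $\mathcal{H}$.'' It is a \emph{containment} criterion: $\chi \rightarrow \chi'$ if and only if $\mathcal{Q}_{\chi'} \subseteq \mathcal{Q}_\chi$ and $\chi - \chi' \in \X$, where $\mathcal{Q}_\chi$ records which sign-patterns $\lambda$ can be witnessed by some lift $\alpha \in \rho^{-1}(\chi) \subset \mf{g}^*$ satisfying the integrality and sign conditions of Definition \ref{defn:lambdas2}. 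There is no region of $\mf{t}^*$ on whose complement the shift relation automatically holds in a prescribed chamber; the task is to show $\mathcal{Q}_\chi \subseteq \mathcal{Q}_{\chi + p\theta}$ for the relevant $\theta$ and $p$. Moreover, your proposed avoidance strategy is internally incoherent: for any fixed integral direction $\theta$, the ray $\chi + \Z_{\ge 0}\theta$ crosses infinitely many of the natural integrality loci (e.g. the conditions $\alpha_i \in \Z$ pulled back to $\mf{t}^*$), so one cannot hope for a full $d$-dimensional cone of $\theta$'s along which $\chi + p\theta$ avoids $\mathcal{H}$ for all $p \ge 0$; indeed the paper's proof works precisely \emph{through} these loci, not around them.

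The paper's actual argument is a direct witness-construction rather than an avoidance argument. Starting from a witness $\beta^\lambda \in \rho^{-1}(\chi)$ for $\lambda \in \mathcal{Q}_\chi$, it first chooses $N_0$ (Claim 1) so that $(1 + pN_0)\beta^\lambda$ has integral entries at the required coordinates and the same sign pattern, hence witnesses $\lambda \in \mathcal{Q}_{\chi + pN_0\,\msf{pr}(\chi)}$; the point is that scaling in the direction $\msf{pr}(\chi)$ preserves the sign conditions, so $\mathcal{Q}_\chi$ only grows. Claim 2 adds the vector $\delta = \sum_i a_i$ to force all the relevant entries strictly away from $0$, keeping the direction in $C$ via the estimate (\ref{eq:bigN}). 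Finally, perturbing by the images $A\cdot v_j$ of the vertices of a small box around the origin produces a full $d$-dimensional integral cone of admissible $\theta$ inside $C$; the same scaling/sign argument shows $\chi \leftarrow \chi + q u_j$ for each generator $u_j$ and all $q \ge 0$, and then for arbitrary nonnegative integer combinations. None of this uses Lemma \ref{lem:amplebundle} or Lemma \ref{lem:eHc}: ampleness and completeness of the filtration are invoked only later (in the proof of Theorem \ref{thm:hypertoricequivalence}), after the purely combinatorial Proposition \ref{prop:shiftingcone} has been established. Their appearance in your proposal is a sign the proposal has conflated the shifting proposition with the full localization theorem.

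In short, the missing idea is that the shift relation should be verified by \emph{constructing} elements of $\rho^{-1}(\chi + p\theta)$ with the required sign behavior, exploiting that shifting along (a perturbation of) $\msf{pr}(\chi)$ can only enlarge $\mathcal{Q}$; your proposal instead postulates a hyperplane-avoidance structure that the criterion does not have and that the geometry of the $p$-ray does not permit.
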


Recall from (\ref{sec:hypertoric}) and (\ref{subsection:defineqmm}) that for each $\chi \in \mf{t}^*$ and $\delta \in C$, where $C$ is the interior of a $d$-cone of $\Delta(\T,\mu^{-1}(0))$, we have defined the sheaf of algebras $\AW_\chi$ on the smooth symplectic manifold $Y(A,\delta)$.

\begin{thm}\label{thm:hypertoricequivalence}
Let $C \subset \X_{\Q}$ be the interior of a $d$-cone of $\Delta(\T,\mu^{-1}(0))$. Choose $\chi \in \mf{t}^*$ such that $\msf{pr} (\chi) \in C$ and choose $\delta \in C$. Let $\AW_\chi$ be the corresponding $W$-algebra on $Y(A,\delta)$.
\begin{enumerate}[(i)]
\item The functor $\Hom_{\Mod_F^{\mathrm{good}}(\AW_\chi)}(\AW_\chi , - )$ defines an equivalence of categories $$
\underline{\Mod}^{\mathrm{good}}_F(\AW_\chi) \, \stackrel{\sim}{\longrightarrow} \, \U_\chi \mmod
$$
with quasi-inverse $\AW_\chi \otimes_{\U_\chi} - $.
\item For any $0 \neq \theta \in C(\chi)$, there exists some $N > 0$ such that the functor $\Hom_{\Mod_F^{\mathrm{good}}(\AW_\chi)}(\AW_\chi , - )$ defines an equivalence of categories
$$
\Mod^{\mathrm{good}}_F(\AW_{\chi + N \theta}) \, \stackrel{\sim}{\longrightarrow} \, \U_{\chi + N \theta} \mmod
$$
with quasi-inverse $\AW_\chi \otimes_{\U_\chi} - $.
\end{enumerate}
\end{thm}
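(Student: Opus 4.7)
My plan is to deduce both parts from Theorem \ref{thm:equivalence}, using Proposition \ref{prop:shiftingcone} to provide the required shift relations, Lemma \ref{lem:amplebundle} for ampleness of the auxiliary line bundle, and a maximality argument in part (ii). First I would verify that the geometric hypotheses (i)--(iv) of Section \ref{sec:affinity} are satisfied in the hypertoric setup: flatness of $\mu_{T^*V}$ and the complete-intersection property of $\mu^{-1}(0)$ come from Lemma \ref{prop:compint}; freeness of the $\T$-action on the $\delta$-stable locus follows from unimodularity of $A$ together with Proposition \ref{prop:smoothres}; and Proposition \ref{prop:symplecticvariety} provides the normality of $S = Y(A,0)$ and the birationality of $f$.

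For part (i), I would apply Proposition \ref{prop:shiftingcone} to obtain a non-zero $\theta \in C(\chi) \subset (C \cap \X) \cup \{0\}$ satisfying $\chi \leftarrow \chi + p\theta$ for every $p \in \Z_{\ge 0}$. Since $\theta$ lies in the interior $C$ of a $d$-cone of $\Delta(\T,\mu^{-1}(0))$, Lemma \ref{lem:amplebundle} gives that $L_\theta$ is ample on $Y(A,\delta)$. All hypotheses of Theorem \ref{thm:equivalence}(ii) are then in place, and the equivalence follows immediately.

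For part (ii), the strategy is to apply the stronger Theorem \ref{thm:equivalence}(iii) at the shifted parameter $\tilde{\chi} := \chi + N\theta$ rather than at $\chi$ itself. Note that $\msf{pr}(\tilde{\chi}) = \msf{pr}(\chi) + N\theta$ still lies in $C$, since $C$ is an open cone containing both $\msf{pr}(\chi)$ and $\theta$. Hence Proposition \ref{prop:shiftingcone} applied to $\tilde{\chi}$ produces a non-zero $\theta' \in C(\tilde{\chi})$ with $L_{\theta'}$ ample and $\tilde{\chi} \leftarrow \tilde{\chi} + n\theta'$ for every $n \ge 0$. The outstanding ingredient is the opposite relation $\tilde{\chi} \rightarrow \tilde{\chi} + n\theta'$. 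For this I would invoke maximality of $\tilde{\chi}$ in the pre-order of Section \ref{sec:QHRone}, which holds for $N \gg 0$ because the Musson--Van den Bergh description of the comparability relation shows that the non-maximal locus is contained in a locally finite union of affine hyperplanes in $\mf{t}^*$, which any sufficiently deep translate inside the open cone $C$ must avoid. Maximality turns $\tilde{\chi} + n\theta' \rightarrow \tilde{\chi}$ into $\tilde{\chi} \rightarrow \tilde{\chi} + n\theta'$, so $\tilde{\chi} \leftrightarrows \tilde{\chi} + n\theta'$ for all $n \ge 0$ and Theorem \ref{thm:equivalence}(iii) applies.

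The main obstacle I expect is the maximality claim in (ii): one has to extract the precise statement from Musson--Van den Bergh that guarantees a sufficiently deep shift inside an open G.I.T. chamber produces a maximal parameter. Once that is in hand, everything else is formal assembly of Proposition \ref{prop:shiftingcone}, Lemma \ref{lem:amplebundle}, and Theorem \ref{thm:equivalence}.
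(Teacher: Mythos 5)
Your part (i) follows the paper exactly: Proposition \ref{prop:shiftingcone} for the shift relation, Lemma \ref{lem:amplebundle} for ampleness, and Theorem \ref{thm:equivalence}(ii).

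For part (ii), however, you take a genuinely different route, and that route has a gap that the paper avoids. You want to push $\chi$ deep enough along $\theta$ that $\tilde\chi = \chi + N\theta$ becomes maximal in the Musson--Van den Bergh pre-order, then invoke Theorem \ref{thm:equivalence}(iii) (in effect Corollary \ref{cor:finiteglobaldim}). The supporting claim --- that the non-maximal locus is contained in a locally finite union of affine hyperplanes which a sufficiently deep translate into $C$ must avoid --- is not established and is not what Musson--Van den Bergh prove. Maximality of $\chi'$ is the condition that $\mc{Q}_{\chi'}$ is maximal among $\mc{Q}_{\chi''}$ for all $\chi''$ with $\chi' - \chi'' \in \X$, and in general this locus is cut out by congruence and sign conditions on $\rho^{-1}(\chi')$, not by a hyperplane arrangement; already for $d=1$ the non-maximal integral parameters form a finite band of lattice points inside $\Z$ rather than a genuine hyperplane locus in $\C$. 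You flag this yourself as the main obstacle, and rightly so. The paper sidesteps maximality entirely: by Remark \ref{remark:list}(iv), since each $\mc{Q}_{\chi + n\theta}$ is a subset of the finite set of covectors of the oriented matroid of $A$, and these sets increase along the $\theta$-ray (by Proposition \ref{prop:shiftingcone} applied at each $\chi + m\theta$, whose projection stays in $C$), the sequence stabilises for $n \gg 0$, giving $\chi + N\theta \leftrightarrows \chi + (N+m)\theta$ for all $m \ge 0$ with \emph{no} maximality assertion. Theorem \ref{thm:equivalence}(iii) is then applied at $\tilde\chi$ in the original direction $\theta$. This is strictly weaker than, and independent of, maximality: if $\U_{\chi + N\theta}$ never has finite global dimension along the ray, your argument cannot get off the ground, but the paper's still does. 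To repair your proof you would either need to supply a genuine proof of the maximality-after-shift claim, or replace it by the finiteness-of-$\mc{Q}$-sets stabilisation argument.
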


\begin{proof}
By Proposition \ref{prop:shiftingcone} we can choose $0 \neq \theta \in C(\chi)$ such that $\chi \leftarrow \chi + p \theta$ for all $p \in \Z_{\ge 0}$. Since $C(\chi) \backslash \{ 0 \} \subset C$, Lemma \ref{lem:amplebundle} says that $\theta$ defines an ample line bundle $L_{\theta}$ on $Y(A,\delta)$ and we have $\AW_{\chi,\theta}(0) / \AW_{\chi,\theta} (-1/2) \simeq L_{-\theta}$. Then part (i) of the theorem is a particular case of Theorem \ref{thm:equivalence} (ii). The proof of Proposition \ref{prop:shiftingcone} shows that we actually have 
$$
\chi \leftarrow \chi + \theta \leftarrow \chi + 2 \theta \leftarrow \cdots.
$$
Since the set of all covectors of the oriented matroid defined by $A$ is finite and each $\mc{Q}_\chi$ (which will be defined in Definition \ref{defn:lambdas2}) is a subset of this set, we see that there are only finitely many different $\mc{Q}_\chi$. Therefore we eventually get $\chi + N \theta \leftrightarrows \chi + (N + 1) \theta \leftrightarrows \cdots$ for some sufficiently large $N$. Then part (ii) of the theorem is a particular case of Theorem \ref{thm:equivalence} (iii) 
\end{proof}

\begin{cor}\label{cor:finiteglobaldim}
Let $Y(A,\delta),\AW_\chi,\U_\chi, \dots$ be as in Theorem \ref{thm:hypertoricequivalence} (with $\msf{pr} (\chi) \in C$). If the global dimension of $\U_{\chi}$ is finite then the functor $\Hom_{\Mod_F^{\mathrm{good}}(\AW_\chi)}(\AW_\chi , - )$ defines an equivalence of categories 
$$
\Mod^{\mathrm{good}}_F(\AW_\chi) \, \stackrel{\sim}{\longrightarrow} \, \U_\chi \mmod
$$
with quasi-inverse $\AW_\chi \otimes_{\U_\chi} - $.
\end{cor}

\begin{proof}
By Proposition \ref{prop:shiftingcone} we can choose $0 \neq \theta \in C(\chi)$ such that $\chi \leftarrow \chi + p \theta$ for all $p \in \Z_{\ge 0}$. However, \cite[Theorem 9.1.1]{MVdB} says that $\chi$ is maximal if and only if the global dimension of $\U_\chi$ is finite. Therefore $\chi \leftrightarrows \chi + \theta$ for all $\theta \in C(\chi)$. Then, as in the proof of Theorem \ref{thm:hypertoricequivalence}, Theorem \ref{thm:equivalence} implies the statement of the corollary.
\end{proof}

It seems natural to conjecture that, for any $\chi \in \mf{t}^*$, $\U_\chi$ has finite global dimension if and only if $\Mod^{\mathrm{good}}_F(\AW_\chi) \, \stackrel{\sim}{\longrightarrow} \, \U_\chi \mmod$, where $\AW_\chi$ is the corresponding $W$-algebra, defined on some $Y(A,\delta)$.

\subsection{The case $d = 1$} In specific cases it is possible to strengthen Proposition \ref{prop:shiftingcone}. One such case is when the torus $\T$ is one dimensional. Here the sets $\mc{Q}_\chi$ (which will be defined in Definition \ref{defn:lambdas2}) can be explicitly described, as was done in \cite{ToricVdB}. Since $A$ is assumed to be unimodular and $a_i \neq 0$ for all $i$ we see that $a_i = \pm 1$ for all $i$. After reordering we may assume that $a_1, \ds, a_k = 1$ and $a_{k+1}, \ds, a_n = -1$. For simplicity let us assume that $n > 1$. Then
$$
\begin{array}{lcl}
\mc{Q}_{\chi} = \{ 0 \} & \Leftrightarrow & \chi \in (\C \backslash \Z) \cup \{ k - n +1 , k - n + 2, \ds, n-k-2, n-k-1 \},\\
\mc{Q}_{\chi} = \{ 0, + \} & \Leftrightarrow & \chi \in \Z_{\ge n - k}, \\
\mc{Q}_{\chi} = \{ 0, - \} & \Leftrightarrow & \chi \in \Z_{\le k - n}.
\end{array}
$$
In this situation $\X_\Q = \Q$ and there are two $1$-cones with respect to the action of $\T$ on $\mu^{-1}(0)$, they are $\Q_{\ge 0}$ and $\Q_{\le 0}$. Applying Theorem \ref{thm:equivalence} gives

\begin{prop}
Let $\dim \T = 1$ and $n > 1$ and choose $\chi \in \mf{t}^*$. For $\delta \neq 0$, let $\AW_\chi$ denote the corresponding $W$-algebra on $Y(A,\delta)$. \begin{enumerate}[(i)]
\item When $\delta = 1$ we have an equivalence $\underline{\Mod}^{\mathrm{good}}_F(\AW_\chi) \, \stackrel{\sim}{\longrightarrow} \, \U_\chi \mmod$ if and only if $\chi \in (\C \backslash \Z) \cup \Z_{\ge 0}$ and $\underline{\Mod}^{\mathrm{good}}_F(\AW_\chi) = \Mod^{\mathrm{good}}_F(\AW_\chi)$ if and only if $\chi \in (\C \backslash \Z) \cup \Z_{\ge n-k}$.
\item When $\delta = -1$ we have an equivalence $\underline{\Mod}^{\mathrm{good}}_F(\AW_\chi) \, \stackrel{\sim}{\longrightarrow} \, \U_\chi \mmod$ if and only if $\chi \in (\C \backslash \Z) \cup \Z_{< 0}$ and $\underline{\Mod}^{\mathrm{good}}_F(\AW_\chi) = \Mod^{\mathrm{good}}_F(\AW_\chi)$ if and only if $\chi \in (\C \backslash \Z) \cup \Z_{\le k-n}$.
\end{enumerate}
\end{prop}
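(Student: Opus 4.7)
My plan is to apply Theorem~\ref{thm:hypertoricequivalence} directly, specialising it by means of the explicit tabulation of $\mc{Q}_\chi$ stated immediately before the proposition. The argument breaks into four main steps.

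First, I would fix the geometric data in the $d=1$, $n>1$, unimodular setting. Since each $a_i=\pm 1$, Corollary~\ref{cor:hypertoricwalls} shows that the two maximal cones of $\Delta(\T,\mu^{-1}(0))$ in $\X_\Q=\Q$ are $\Q_{\ge 0}$ and $\Q_{\le 0}$; the former contains $\delta=1$, the latter $\delta=-1$. By Lemma~\ref{lem:amplebundle}, any nonzero integer $\theta$ in the relevant cone $C$ gives an ample line bundle $L_\theta$ on $Y(A,\delta)$, so the hypotheses of Theorem~\ref{thm:hypertoricequivalence} are satisfied as soon as the appropriate shift relation is verified.

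Second, I would translate the two asserted equivalences into shift relations. Theorem~\ref{thm:equivalence}(ii) gives the equivalence $\underline{\Mod}^{\mathrm{good}}_F(\AW_\chi)\stackrel{\sim}{\longrightarrow}\U_\chi\mmod$ once some $\theta\in C$ satisfies $\chi\leftarrow\chi+p\theta$ for every $p\in\Z_{\ge 0}$, while Theorem~\ref{thm:equivalence}(iii) upgrades this to the equality $\underline{\Mod}^{\mathrm{good}}_F(\AW_\chi)=\Mod^{\mathrm{good}}_F(\AW_\chi)$ exactly when additionally $\chi\to\chi+p\theta$ for all such $p$. Thus it suffices to check, for each $\chi$ and each sign of $\theta$, whether these shift relations can be achieved.

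Third, I would carry out the case analysis using the tabulated values of $\mc{Q}_{\chi+p\theta}$. For $\chi\notin\Z$, every shift $\chi+p\theta$ is still non-integer, so $\mc{Q}_{\chi+p\theta}=\{0\}$; this forces both multiplication maps to be surjective and accounts for the $\C\setminus\Z$ component appearing in each of the four conditions in a single stroke. For $\chi\in\Z$ and $\theta>0$ (so $\delta=1$), tracking $\chi+p\theta$ through the three possible values of $\mc{Q}$ shows that $\chi\leftarrow\chi+p\theta$ holds for all $p\in\Z_{\ge 0}$ precisely when $\chi\ge 0$ (the obstruction $-\in\mc{Q}_{\chi+p\theta}$ for some intermediate $p$ occurs exactly when $\chi$ is a sufficiently negative integer), and that the two-sided $\chi\leftrightarrows\chi+p\theta$ holds precisely when $\chi\ge n-k$ (so that $\mc{Q}_{\chi+p\theta}\subseteq\{0,+\}$ for every $p\ge 0$ and the upward direction is not obstructed either). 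The $\delta=-1$ case then follows by the symmetry $\chi\mapsto-\chi$, $\theta\mapsto-\theta$, together with the swap $k\leftrightarrow n-k$ induced by interchanging the two blocks of the matrix $A$.

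The main obstacle is the ``only if'' direction, since Theorem~\ref{thm:equivalence} only provides sufficient conditions. To close the proposition one must verify that in each excluded regime the corresponding equivalence or equality really breaks. I would handle this by direct construction: for instance, when $\delta=1$ and $\chi\in\Z_{<0}$, the explicit description of the bimodules $\U_\chi^{p\theta}$ in \cite[Sections 4.4 and 9]{MVdB} combined with the tabulation of $\mc{Q}$ produces an $\ms{M}\in\Mod^{\mathrm{good}}_F(\AW_\chi)$ with no nonzero $F$-invariant global sections, which obstructs essential surjectivity of $\Hom(\AW_\chi,-)$; a parallel construction exhibits modules that are not generated locally on $S$ by their global sections when $\chi\notin(\C\setminus\Z)\cup\Z_{\ge n-k}$, separating $\underline{\Mod}^{\mathrm{good}}_F$ from $\Mod^{\mathrm{good}}_F$. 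Producing these concrete counter-examples cleanly is where most of the genuine work lies.
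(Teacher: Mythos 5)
Your proposal is essentially what the paper itself does: the paper's proof of this proposition is literally the single sentence "Applying Theorem~\ref{thm:equivalence} gives [Proposition]," i.e.\ plug the tabulated $\mc{Q}_\chi$ into Theorem~\ref{thm:equivalence} and Lemma~\ref{lem:newprimitive}. You are more explicit, and you correctly flag that this only yields the ``if'' direction (Theorem~\ref{thm:equivalence} gives sufficient, not necessary, conditions), a point the paper glosses over entirely. That honesty is a genuine improvement in rigour over the paper's one-liner.

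However, there is a concrete error in your intermediate computation. You write that $\chi \leftarrow \chi + p\theta$ fails because ``the obstruction $- \in \mc{Q}_{\chi + p\theta}$ for some intermediate $p$'' arises, and conclude the threshold $\chi \geq 0$. That is the wrong containment. By Lemma~\ref{lem:newprimitive}, $\chi + p\theta \rightarrow \chi$ if and only if $\mc{Q}_\chi \subseteq \mc{Q}_{\chi + p\theta}$ (and the difference lies in $\X$). Having $-$ appear in the \emph{larger} set $\mc{Q}_{\chi + p\theta}$ never obstructs containment; the obstruction is that $- \in \mc{Q}_\chi$ while $- \notin \mc{Q}_{\chi + p\theta}$ for some (indeed for all large) $p$, which for $\theta>0$ occurs precisely when $- \in \mc{Q}_\chi$, i.e.\ $\chi \in \Z_{\le k-n}$ in the paper's tabulation. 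Carried out correctly, your case analysis therefore yields the threshold $\chi \in (\C \setminus \Z) \cup \Z_{\ge k-n+1}$ for the downward shift, not $\chi \in (\C \setminus \Z) \cup \Z_{\ge 0}$. (Your upward-shift computation, giving the $\Z_{\ge n-k}$ bound for $\underline{\Mod} = \Mod$, is correct, since there the containment forces $\mc{Q}_\chi \supseteq \{0,+\}$.) The ``$\chi \ge 0$'' in the paper's statement is simply not what Lemma~\ref{lem:newprimitive} together with the displayed tabulation produces, so either the paper's tabulation or the proposition has a typo, and your derivation does not in fact recover the stated bound. You should either reconcile the discrepancy or rederive the $\mc{Q}_\chi$ table directly from Definition~\ref{defn:lambdas2} (note also the asymmetry $\ge 0$ versus $<0$ there, which makes $+$ and $-$ non-symmetric under $\chi \mapsto -\chi$, $k \mapsto n-k$), and then state the shift threshold that actually follows.
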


This result can be viewed as a variant of \cite[Theorem 6.1.3]{ToricVdB}, where sufficient conditions for the $D$-affinity of weighted projective spaces are stated.

\subsection{}\label{sec:proofprop}

The remainder of this section is devoted to the proof of Proposition \ref{prop:shiftingcone}. Since $\T$ can be considered as a subgroup of $\T^n$, $\mf{t}$ is a Lie subalgebra of $\mf{g} = \textrm{Lie} (\T^n)$ and we may regard elements of $\mf{t}$ as linear functionals on $\mf{g}^*$. Let $\rho : \mf{g}^* \twoheadrightarrow \mf{t}^*$ be the natural map. 

\bdefn\label{defn:lambdas}
Let $\lambda \in \Y$ and $\theta \in (\sum_{\langle \lambda, a_i \rangle = 0} \C \cdot a_i) \bigm/ (\sum_{\langle \lambda, a_i \rangle = 0} \Z \cdot a_i)$. We say that the pair $(\lambda, \theta)$ is attached to $\chi$ if there exists $\alpha \in \rho^{-1}(\chi)$ such that
$$
\sum_{\langle \lambda, a_i \rangle = 0} \alpha_i a_i \equiv \theta \, \mod \, \sum_{\langle \lambda, a_i \rangle = 0} \Z \cdot a_i
$$
and
$$
\begin{array}{rcl}\label{array:lambda}
\langle \lambda , a_i \rangle > 0 & \Rightarrow & \alpha_i \in \Z, \alpha_i \ge 0 \\
\langle \lambda , a_i \rangle < 0 & \Rightarrow & \alpha_i \in \Z, \alpha_i < 0 \\
\langle \lambda , a_i \rangle = 0 & \Rightarrow & \alpha_i \in \C \backslash \Z.
\end{array}
$$
\edefn

\begin{remark} 
The above definition is based on \cite[Definition 7.2.1]{MVdB}. There it is stipulated that $\lambda \in \mf{t} \cap \Q^n$ but we only care about whether $\langle \lambda , a_i \rangle$ is $>0, <0$ or $ = 0$ therefore we can assume $\lambda \in \Y$. Also our sign convention in Definition \ref{defn:lambdas} is opposite to that given in \cite[Definition 7.2.1]{MVdB} so that it agrees with the conventions of Section \ref{sec:geometry}.
\end{remark}

\subsection{} Let us define an equivalence relation on the set of pairs $(\lambda, \theta)$ by saying that $(\lambda_1, \theta_1)$ is equivalent to $(\lambda_2, \theta_2)$ if $\{ i \, | \, \langle \lambda_1 , a_i \rangle > 0 \} = \{ i \, | \, \langle \lambda_2 , a_i \rangle > 0 \}$, $\{ i \, | \, \langle \lambda_1 , a_i \rangle < 0 \} = \{ i \, | \, \langle \lambda_2 , a_i \rangle < 0 \}$ and $\theta_2 \equiv \theta_1 \, \mod \, \sum_{\langle \lambda_1 , a_i \rangle = 0} \Z \cdot a_i$. Denote by $\mc{P}_{\chi}$ the set of equivalence classes of pairs $(\lambda,\theta)$ that are attached to $\chi$. The set of all possible $\lambda$ up to equivalence consist of the (finitely many) covectors of the oriented matroid defined by $A$. It will be convenient to parameterize each $\lambda$ (again up to equivalence) as an element in $\{ + , 0 , - \}^n$, $\lambda \leftrightarrow (e_i)_{i \in [1,n]}$ with $e_i = +$ if $\langle \lambda, a_i \rangle > 0 $ and so forth. Note, however, that not every element of $\{ + , 0 , - \}^n$ can be realized as some $\lambda$.

\begin{prop}[Proposition 7.7.1, \cite{MVdB}]\label{prop:primitive}
Choose $\chi, \chi' \in \mf{t}^*$, then the set $\mc{P}_\chi$ parameterizes the primitive ideals in $\U_\chi$ and $\chi \rightarrow \chi'$ if and only if $\mc{P}_{\chi'} \subseteq \mc{P}_\chi$.
\end{prop}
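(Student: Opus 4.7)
The plan is to derive Proposition \ref{prop:primitive} from Theorem 4.4.4 of \cite{MVdB} together with an independent identification of the primitive spectrum of $\U_\chi$ with $\mc{P}_\chi$. Once such a bijection $\mathrm{Prim}(\U_\chi) \leftrightarrow \mc{P}_\chi$ is in hand, the equivalence $\chi \rightarrow \chi' \Leftrightarrow \mc{P}_{\chi'} \subseteq \mc{P}_\chi$ is a formal combinatorial consequence, so the substantive work lies in constructing and analysing the bijection.

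First I would construct the map $\mc{P}_\chi \to \mathrm{Prim}(\U_\chi)$ by sending an attached pair $(\lambda,\theta)$ to a primitive ideal built as follows. Pick a representative $\alpha \in \rho^{-1}(\chi)$ satisfying the conditions of Definition \ref{defn:lambdas}. Form a $\algD(V)$-module by inducing from a one-dimensional module on which $x_i$ acts as zero when $\langle\lambda,a_i\rangle > 0$, $\p_i$ acts as zero when $\langle\lambda,a_i\rangle < 0$, and the Euler operators in the directions $\langle\lambda,a_i\rangle = 0$ act through the scalar $\alpha_i$. Taking $\T$-invariants (after tensoring by the appropriate character) produces a $\U_\chi$-module, and its annihilator gives a two-sided ideal of $\U_\chi$ which one then shows to be primitive. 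The candidate inverse map sends a primitive ideal to the pair $(\lambda,\theta)$ read off from the associated variety of the ideal inside $\mu^{-1}(0)/\!/\T$: the sign pattern of $\lambda$ records which toric boundary components the variety meets, and $\theta$ records the residual continuous datum after the integer shifts along those boundary strata have been quotiented out.

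For the equivalence in the second half, suppose $\chi \rightarrow \chi'$ and take any $(\lambda,\theta) \in \mc{P}_{\chi'}$ realized by some $\alpha' \in \rho^{-1}(\chi')$. By Theorem 4.4.4 there exists $\alpha \in \langle\alpha'\rangle \cap \rho^{-1}(\chi)$; the defining conditions of $\langle\alpha'\rangle$ force $\alpha$ to share the integrality pattern, the signs of the integer coordinates, and the residue class of $\sum_{e_i = 0}\alpha_i a_i$ modulo $\sum_{e_i=0}\Z \cdot a_i$ with $\alpha'$, so the same $(\lambda,\theta)$ is attached to $\chi$, and hence $\mc{P}_{\chi'}\subseteq\mc{P}_\chi$. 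Conversely, assuming $\mc{P}_{\chi'}\subseteq\mc{P}_\chi$, any $\alpha'\in\rho^{-1}(\chi')$ determines an attached pair lying also in $\mc{P}_\chi$, hence realized by some $\alpha\in\rho^{-1}(\chi)$; a direct check of sign patterns and residues produces an integer vector $\alpha - \alpha' \in \Z^n$ with $\alpha \in \langle\alpha'\rangle$, so $\langle\alpha'\rangle \cap \rho^{-1}(\chi)\neq\emptyset$ and Theorem 4.4.4 delivers $\chi\rightarrow\chi'$.

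The main obstacle will be the well-definedness and injectivity of the parameterization $\mc{P}_\chi \to \mathrm{Prim}(\U_\chi)$: two representatives $\alpha,\beta\in\rho^{-1}(\chi)$ with the same attached pair $(\lambda,\theta)$ need not lie in the same $\langle\cdot\rangle$-orbit, yet they must produce the same primitive ideal. The resolution is that shifting the coordinates $\alpha_i$ with $\langle\lambda,a_i\rangle = 0$ by an element of $\Z^n$ changes the induced $\algD(V)$-module, but only by translations which become invisible after passing to $\T$-invariants; only the residue class defining $\theta$ is preserved by this operation, which is precisely the reason the invariant $\theta$ is taken modulo $\sum_{\langle\lambda, a_i\rangle = 0}\Z\cdot a_i$. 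Carrying out this analysis rigorously, together with the matroid-theoretic verification that the various sign patterns $\lambda$ indeed exhaust the primitive spectrum, is exactly the content of \cite[\S 7]{MVdB}, and uses the assumption that the $d\times d$ minors of $A$ are coprime through the exactness of the sequence (\ref{eq:exact3}).
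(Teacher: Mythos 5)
This statement is cited from \cite[Proposition~7.7.1]{MVdB} and the paper supplies no proof of its own, so there is nothing internal to compare your argument against; the authors rely on Musson--Van den Bergh wholesale. Your sketch is nevertheless a reasonable reconstruction of the rough shape of the argument in \cite[\S 7]{MVdB}, so a few comments on its accuracy are in order.

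The forward direction ($\chi \rightarrow \chi' \Rightarrow \mc{P}_{\chi'} \subseteq \mc{P}_\chi$) is essentially right as you state it: an attached pair for $\chi'$ comes with a witness $\alpha'$, Theorem~4.4.4 supplies $\alpha \in \langle \alpha' \rangle \cap \rho^{-1}(\chi)$, and membership in $\langle \alpha' \rangle$ forces $\alpha$ to have the same integrality pattern, the same signs on the integer coordinates, and the same residue $\theta$; so the pair is also attached to $\chi$. The converse, however, has a genuine gap. You take an \emph{arbitrary} $\alpha' \in \rho^{-1}(\chi')$ and assert that it ``determines an attached pair.'' It does not: an attached pair requires an $\alpha$ that is stratified by the sign vector of some covector $\lambda$ exactly as in Definition~\ref{defn:lambdas} (integers with the prescribed sign where $\langle \lambda, a_i \rangle \neq 0$, non-integers where $\langle \lambda, a_i \rangle = 0$), and a generic $\alpha'$ satisfies no such condition. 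Moreover, even for an $\alpha'$ that \emph{is} attached, the representative $\alpha \in \rho^{-1}(\chi)$ that witnesses the pair in $\mc{P}_\chi$ is only constrained on its integer coordinates and on the residue of the non-integer block modulo $\sum_{\langle\lambda,a_i\rangle=0}\Z\cdot a_i$; there is no reason that $\alpha - \alpha' \in \Z^n$, let alone that $\alpha \in \langle\alpha'\rangle$. Closing this requires the additional structure developed in \cite[\S 7]{MVdB}: one must understand how the family of sets $\{\langle\alpha'\rangle : \alpha' \in \rho^{-1}(\chi')\}$ is organized under closure, identify the maximal strata with attached pairs, and show that containment of an arbitrary $\langle\alpha'\rangle$ in the $\chi$-fiber of some stratum follows from containment of the maximal ones. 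Stated otherwise, the passage from Theorem~4.4.4 to Proposition~\ref{prop:primitive} is not the ``formal combinatorial consequence'' you claim it to be once a bijection with $\mathrm{Prim}(\U_\chi)$ is granted; the bijection and the reformulation of $\rightarrow$ are proved together, and the nontrivial half is precisely the converse you compress into ``a direct check.''

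Your discussion of well-definedness of the map $\mc{P}_\chi \to \mathrm{Prim}(\U_\chi)$ and of the role of the residue $\theta$ correctly identifies the key subtlety, and pointing to the matroid covectors exhausting the primitive spectrum is accurate; but be aware that the hypothesis making this cleanest in the present paper is unimodularity of $A$ (which the authors exploit in Lemma~\ref{lem:removetheta} to suppress $\theta$ entirely), not merely coprimality of the $d\times d$ minors.
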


Since we are interested in sheaves of $W$-algebras on smooth hypertoric varieties we may assume that $A$ is unimodular. This allows us to remove $\theta$ from the description of $\mc{P}_\chi$.

\begin{lem}\label{lem:removetheta}
Assume that $A$ is unimodular and let $(\lambda, \theta)$, respectively $(\lambda, \vartheta)$, be attached to $\chi$ via $\alpha \in \rho^{-1}(\chi)$, respectively via $\beta \in \rho^{-1}(\chi)$. Then $(\lambda, \theta)$ is equivalent to $(\lambda, \vartheta)$.
\end{lem}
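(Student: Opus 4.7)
The plan is to show directly that $\theta \equiv \vartheta$ modulo $M := \sum_{i \in I} \Z \cdot a_i$, where $I := \{\, i \in [1,n] \mid \langle \lambda, a_i \rangle = 0 \,\}$, via a saturation argument that uses the unimodularity of $A$.

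First I would unpack the definition of ``attached''. Since $\rho(\alpha) = \rho(\beta) = \chi$, one has $\sum_{i=1}^n (\alpha_i - \beta_i) a_i = 0$ in $\mf{t}^*$, which rearranges to
\bdm
\sum_{i \in I}(\alpha_i - \beta_i)\, a_i \;=\; -\sum_{i \notin I}(\alpha_i - \beta_i)\, a_i.
\edm
The left-hand side is a representative in $V_I := \sum_{i \in I} \C \cdot a_i$ of the class $\theta - \vartheta$. The right-hand side lies in $\sum_{i \notin I} \Z \cdot a_i \subseteq \X$, since $\alpha_i, \beta_i \in \Z$ whenever $\langle \lambda, a_i \rangle \neq 0$ by the definition of ``attached''. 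Thus $\theta - \vartheta$ is represented by an element of $V_I \cap \X$, and the lemma reduces to showing that $V_I \cap \X \subseteq M$, i.e.\ that $M$ is a \emph{saturated} sublattice of $\X$.

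The core of the argument is to prove this saturation property for \emph{any} sublattice of the form $\sum_{j \in J} \Z \cdot a_j$, $J \subseteq [1,n]$. Pick a maximal $\Q$-linearly independent subset $\{a_{j_1}, \ldots, a_{j_r}\}$ of $\{a_j : j \in J\}$. Since the $d \times d$ minors of $A$ are coprime, $A$ has $\Q$-rank $d$, so this set can be completed to a $\Q$-basis of $\X_\Q$ by adjoining further columns $a_{k_1}, \ldots, a_{k_{d-r}}$ of $A$. The resulting $d \times d$ submatrix has non-zero determinant, hence equal to $\pm 1$ by unimodularity; so these $d$ columns form a $\Z$-basis of $\X$, and in particular $\sum_\ell \Z \cdot a_{j_\ell}$ is saturated. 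For any remaining $j \in J$, the inclusion $a_j \in \sum_\ell \Q \cdot a_{j_\ell}$ combined with expressing $a_j$ in the $\Z$-basis just constructed forces the coefficients on the $a_{k_m}$ to vanish and those on the $a_{j_\ell}$ to be integers, so $\sum_{j \in J} \Z \cdot a_j = \sum_\ell \Z \cdot a_{j_\ell}$ is saturated. Applying this with $J = I$ finishes the argument.

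The only real obstacle is the saturation step. Unimodularity is essential there: it is precisely what upgrades ``expressible over $\Q$'' to ``expressible over $\Z$''. Without this hypothesis, $\X / M$ typically has torsion and the conclusion can fail.
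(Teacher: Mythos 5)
Your proof is correct and follows essentially the same route as the paper: reduce the equivalence of $(\lambda,\theta)$ and $(\lambda,\vartheta)$ to the saturation of the sublattice $\sum_{\langle\lambda,a_i\rangle=0}\Z\cdot a_i$ in $\X$, then prove saturation by extending a maximal $\Q$-independent subset of those columns to a $\Z$-basis of $\X$ via unimodularity. Your write-up is somewhat more explicit (e.g.\ spelling out why the spanning set of step two can be replaced by the independent subset), but no new idea is introduced.
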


\begin{proof}
By definition, $\theta$ is the equivalence class of $\sum_{\langle \lambda , a_i \rangle = 0} \alpha_{i} a_i$ in $(\sum_{\langle \lambda, a_i \rangle = 0} \C \cdot a_i) \bigm/ (\sum_{\langle \lambda, a_i \rangle = 0} \Z \cdot a_i)$, and similarly for $\vartheta$. Therefore we must show that $\sum_{i = 1}^n \alpha_i a_i = \sum_{i = 1}^n \beta_i a_i$ implies that
\beq\label{eq:equivequality}
\sum_{\langle \lambda , a_i \rangle = 0}^n \alpha_i a_i \equiv \sum_{\langle \lambda , a_i \rangle = 0}^n \beta_i a_i \, \mod \, \sum_{\langle \lambda, a_i \rangle = 0} \Z \cdot a_i.
\eeq
Choose $\{ a_{i_1}, \ds, a_{i_k} \} \subset \{ a_i \, | \, \langle \lambda , a_i \rangle = 0 \}$ to be a basis of the space spanned by the set $\{ a_i \, | \, \langle \lambda , a_i \rangle = 0 \}$. We can extend this to a basis $a_{i_1} ,\ds, a_{i_k} , a_{i_{k+1}}, \ds, a_{d}$ of $\mf{t}^*$. Since $A$ is unimodular the determinant of this basis is $\pm 1$. Hence $\{ a_{i_1}, \ds, a_{i_k} \}$ span a direct summand of the lattice $\X$. This implies that
\beq\label{eq:liesinZ}
\Bigl( \sum_{\langle \lambda , a_i \rangle = 0} \C \cdot a_i \Bigr) \cap \X = \sum_{\langle \lambda , a_i \rangle = 0} \Z \cdot a_i,
\eeq
which in turn implies (\ref{eq:equivequality}).
\end{proof}

It is shown in \cite[Example 7.2.7]{MVdB} that $\theta$ is not defined up to equivalence by $\chi$ and $\lambda$ if $A$ is not unimodular.

\subsection{}\label{sec:newdefQ}
Based on Lemma \ref{lem:removetheta} we make the following definition.
\bdefn\label{defn:lambdas2}
Let $\lambda \in \Y$ and $\chi \in \mf{t}^*$. We say that $\lambda$ is attached to $\chi$ if there exists $\alpha \in \rho^{-1}(\chi)$ such that
$$
\begin{array}{rcl}
\langle \lambda , a_i \rangle > 0 & \Rightarrow & \alpha_i \in \Z, \alpha_i \ge 0 \\
\langle \lambda , a_i \rangle < 0 & \Rightarrow & \alpha_i \in \Z, \alpha_i < 0 \\
\langle \lambda , a_i \rangle = 0 & \Rightarrow & \alpha_i \in \C \backslash \Z.
\end{array}
$$
If $\lambda_1,\lambda_2 \in \Y$ are attached to $\chi$ then we say that $\lambda_1$ is equivalent to $\lambda_2$ if $\{ i \, | \, \langle \lambda_1 , a_i \rangle > 0 \} = \{ i \, | \, \langle \lambda_2 , a_i \rangle > 0 \}$ and $\{ i \, | \, \langle \lambda_1 , a_i \rangle < 0 \} = \{ i \, | \, \langle \lambda_2 , a_i \rangle < 0 \}$. Let $\mc{Q}_\chi$ denote the set of equivalence classes of elements in $\Y$ that are attached to $\chi$. 
\edefn

\begin{lem}\label{lem:newprimitive}
Assume that $A$ is unimodular. Then $\chi \rightarrow \chi'$ if and only if $\mc{Q}_{\chi'} \subseteq \mc{Q}_{\chi}$ and $\chi - \chi' \in \X$.
\end{lem}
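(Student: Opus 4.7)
The plan is to deduce Lemma \ref{lem:newprimitive} directly from Proposition \ref{prop:primitive} by establishing, under the unimodularity hypothesis, that the containment $\mc{P}_{\chi'} \subseteq \mc{P}_\chi$ is equivalent to $\mc{Q}_{\chi'} \subseteq \mc{Q}_\chi$ whenever $\chi - \chi' \in \X$. The bridge between $\mc{P}$ and $\mc{Q}$ is Lemma \ref{lem:removetheta}, which when $A$ is unimodular collapses the $\theta$ component of an attached pair to something intrinsic to $\chi$ and $\lambda$.

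First I would note that the condition $\chi - \chi' \in \X$ is forced in the ``only if'' direction, since the relation $\chi \to \chi'$ is defined via the bimodule $\U_\chi^{\chi'-\chi}$, which requires $\chi'-\chi \in \X$. Lemma \ref{lem:removetheta} then says that the forgetful assignment $(\lambda,\theta) \mapsto \lambda$ induces a well-defined bijection $\mc{P}_\chi \stackrel{\sim}{\longrightarrow} \mc{Q}_\chi$ for each $\chi$, so Proposition \ref{prop:primitive} combined with this bijection immediately yields $\mc{Q}_{\chi'} \subseteq \mc{Q}_\chi$.

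The substantive step is the converse. Assuming $\chi - \chi' \in \X$ and $\mc{Q}_{\chi'} \subseteq \mc{Q}_\chi$, take $(\lambda,\theta) \in \mc{P}_{\chi'}$ witnessed by $\alpha' \in \rho^{-1}(\chi')$, and use $\lambda \in \mc{Q}_\chi$ to pick $\alpha \in \rho^{-1}(\chi)$ with the same sign pattern for $\lambda$, producing $(\lambda,\vartheta) \in \mc{P}_\chi$. The task reduces to showing $\vartheta \equiv \theta$. Partition $[1,n] = J_+ \sqcup J_- \sqcup J_0$ according to the sign of $\langle \lambda, a_i \rangle$; for $i \in J_+ \cup J_-$ both $\alpha_i, \alpha'_i$ are integers, so
\bdm
\sum_{i \in J_0} (\alpha_i - \alpha'_i) a_i \;=\; (\chi - \chi') - \sum_{i \in J_+ \cup J_-} (\alpha_i - \alpha'_i) a_i
\edm
lies in $\X \cap \sum_{i \in J_0} \C \cdot a_i$, which by equation (\ref{eq:liesinZ}) equals $\sum_{i \in J_0} \Z \cdot a_i$. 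Hence $\vartheta - \theta \equiv 0$ modulo $\sum_{J_0} \Z \cdot a_i$, so the classes agree and $\mc{P}_{\chi'} \subseteq \mc{P}_\chi$; Proposition \ref{prop:primitive} then yields $\chi \to \chi'$.

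The main obstacle is this matching of $\theta$ components, which is not a calculation but a structural point requiring \emph{both} hypotheses simultaneously. Unimodularity powers the identification (\ref{eq:liesinZ}), while $\chi - \chi' \in \X$ is precisely what places the difference $\sum_{J_0}(\alpha_i - \alpha'_i) a_i$ inside the character lattice in the first place. Dropping either hypothesis would leave the $\theta$ data genuinely relevant, and the coarser invariant $\mc{Q}_\chi$ would no longer detect $\chi \to \chi'$.
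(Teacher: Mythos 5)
Your proof is correct and matches the paper's argument in substance: both directions rest on Proposition \ref{prop:primitive} together with equation (\ref{eq:liesinZ}), and the key step in both is observing that for a common $\lambda$, the difference of the $J_0$-components lies in $\X \cap \sum_{\langle \lambda,a_i\rangle=0}\C\cdot a_i = \sum_{\langle \lambda,a_i\rangle=0}\Z\cdot a_i$, forcing the $\theta$-components to agree. The only cosmetic difference is that the paper constructs an explicit new witness $\delta\in\rho^{-1}(\chi)$ whose $\theta$-component literally equals that of $\alpha'$, whereas you directly show the two $\theta$-classes coincide; your phrasing is a touch more economical but it is the same argument.
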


\begin{proof}
If $\chi \rightarrow \chi'$ then clearly $\chi - \chi' \in \X$ and Proposition \ref{prop:primitive} implies that $\mc{P}_{\chi'} \subseteq \mc{P}_{\chi}$. This implies that $\mc{Q}_{\chi'} \subseteq \mc{Q}_{\chi}$. 

Now assume that $\mc{Q}_{\chi'} \subseteq \mc{Q}_{\chi}$ and $\chi - \chi' \in \X$. Let $\lambda \in \mc{Q}_{\chi'}$ and choose $\alpha \in \rho^{-1}(\chi')$, respectively $\beta \in \rho^{-1}(\chi)$, satisfying the conditions of Definition \ref{defn:lambdas2} for $\lambda$ with respect to $\chi'$, respectively $\chi$. Write $\alpha = \alpha^{(1)} + \alpha^{(2)}$, where $(\alpha^{(1)})_i = \alpha_i$ if $\langle \lambda, a_i \rangle \neq 0$ and $(\alpha^{(1)})_i = 0$ if $\langle \lambda, a_i \rangle = 0$. Decompose $\beta = \beta^{(1)} + \beta^{(2)}$ in a similar fashion. Then 
$$
(\chi - \chi') - \rho(\beta^{(1)} - \alpha^{(1)}) \in \Bigl( \sum_{\langle \lambda , a_i \rangle = 0} \C \cdot a_i \Bigr) \cap \X,
$$
which, by (\ref{eq:liesinZ}), equals $\sum_{\langle \lambda , a_i \rangle = 0} \Z \cdot a_i$. Therefore we can choose $u \in \Z^n$ such that $u_i = 0$ for all $i$ such that $\langle \lambda, a_i \rangle = 0$ and $\rho(u) = (\chi - \chi') - \rho(\beta^{(1)} - \alpha^{(1)})$. Define $\delta^{(2)} = \alpha^{(2)} + u$ and $\delta = \beta^{(1)} + \delta^{(2)}$ so that $\rho(\delta) = \chi$. We have 
$$
\bar{\delta}^{(2)} = \bar{\alpha}^{(2)} \in \Bigl( \sum_{\langle \lambda, a_i \rangle = 0} \C \cdot a_i \Bigr) \bigm/ \Bigl( \sum_{\langle \lambda, a_i \rangle = 0} \Z \cdot a_i \Bigr)
$$
and $(\lambda,\bar{\alpha}^{(2)})$, respectively $(\lambda,\bar{\delta}^{(2)})$, is attached to $\chi'$, respectively to $\chi$, in the sense of Definition \ref{defn:lambdas}. Therefore $(\lambda,\bar{\delta}^{(2)}) = (\lambda,\bar{\alpha}^{(2)}) \in \mc{P}_{\chi}$ implies that $\mc{P}_{\chi'} \subseteq \mc{P}_{\chi}$. Hence Proposition \ref{prop:primitive} implies that $\chi \rightarrow \chi'$. 
\end{proof}

\begin{proof}[Proof of Proposition \ref{prop:shiftingcone}]
As was stated in subsection \ref{sec:GIT}, the cone $\overline{C}$ is a rational cone. Therefore we can choose $\mu_1 , \ds, \mu_k$ in $\Y$ such that
$$
\overline{C} = \{ \chi \in \X_{\R} \, | \, \langle \mu_i , \chi \rangle \ge 0, \, \forall i \in [1,k] \} \supset \{ \chi \in \X_{\R} \, | \, \langle \mu_i , \chi \rangle > 0, \, \forall i \in [1,k] \} = C.
$$
We will construct $C(\chi)$ in three stages.

\subsection*{Claim $1$} There exists an integer $N_0 \gg 0$ such that $p N_0 \cdot \msf{pr}(\chi) \in \X \cap C$ and $\chi + p N_0 \cdot \msf{pr}(\chi) \rightarrow \chi$ for all $p \in \N$.\\

For each $\lambda \in \mc{Q}_\chi$ fix an element $\beta^\lambda \in \rho^{-1}(\chi)$ such that $\beta^\lambda$ satisfies the properties listed in Definition \ref{defn:lambdas2} with respect to $\lambda$. Then $\msf{pr}(\chi) = \sum_{i = 1}^n \msf{pr}(\beta^\lambda_i) a_i$ and we choose $N_0$ such that $N_0 \cdot \msf{pr}(\beta_i^\lambda) \in \Z$ for all $\lambda \in \mc{Q}_{\chi}$ and all $i$. The element $(\beta^\lambda_i + p N_0 \beta^\lambda_i)_{i \in [1,n]}$ in $\mf{g}^*$ satisfies the properties of Definition \ref{defn:lambdas2} with respect to $\lambda$ hence $\mc{Q}_{\chi} \subseteq \mc{Q}_{\chi + p N_0 \cdot \msf{pr}(\chi)}$. Since $p N_0 \cdot \msf{pr}(\chi) \in \X$, Lemma \ref{lem:newprimitive} says that $\chi + p N_0 \cdot \msf{pr}(\chi) \rightarrow \chi$ for all $p \in \N$. Note also that $\langle \mu_i, \msf{pr}(\chi + p N_0 \cdot \msf{pr}(\chi)) \rangle = (1 + p N_0) \langle \mu_i, \msf{pr}(\chi) \rangle > 0$ for all $i$ shows that $\msf{pr}(\chi + p N_0 \cdot \msf{pr}(\chi)) \in C$.

\subsection*{Claim $2$} Fix $\delta = \sum_{i=1}^n a_i \in \X$. There exists an integer $N_1 \gg 0$ such that $N_1 \cdot \msf{pr}(\chi) + \delta \in \X \cap C$ and $\chi + p(N_1 \cdot \msf{pr}(\chi) + \delta) \rightarrow \chi$ for all $p \in \N$. Moreover, for all $\lambda \in \mc{P}_{\chi}$, there exists $\beta^{\lambda}$ as before except that $\beta_i^\lambda \neq 0$ for all $i$.\\

Choose $N_1 = p N_0$ such that
\beq\label{eq:bigN}
(N_1 / d) \cdot \langle \mu_i , \msf{pr}(\chi) \rangle > | \langle \mu_i , a_j \rangle|
\eeq
for all $i \in [1,k]$ and $j \in [1,n]$. Let $\beta^\lambda \in \rho^{-1}(\chi + N_1 \cdot \msf{pr}(\chi))$ satisfy Definition \ref{defn:lambdas2} with respect to $\lambda$ for $\lambda \in \mc{Q}_{\chi}$. By choosing a larger $p$ if necessary we may assume that $\beta^\lambda_i \in \Z \backslash \{ 0 \}$ implies that $| \beta^\lambda_i | > 1$. Then $\beta^\lambda_i + 1 < 0 $ if $\beta^\lambda_i \in \Z_{< 0}$ and $\beta^\lambda_i + 1 > 0 $ if $\beta^\lambda_i \in \Z_{\ge 0}$. Moreover $(\beta^\lambda_i + 1)_{i \in [1,n]}$ satisfies (\ref{array:lambda}) with respect to $\lambda$, $\sum_{i = 1}^n (\beta^{\lambda}_i + 1)a_i = \chi + (N_1 \cdot \msf{pr}(\chi) + \delta)$ and hence $\chi + (N_1 \cdot \msf{pr}(\chi) + \delta) \rightarrow \chi$. The same holds for all $\chi + p(N_1 \cdot \msf{pr}(\chi) + \delta)$. Finally (\ref{eq:bigN}) implies that $\msf{pr}(\chi + q(N_1 \cdot \msf{pr}(\chi) + \delta)) \in C$ for all $q \in \Z_{\ge 0}$.
\subsection*{Proof of the proposition}

Note that (\ref{eq:bigN}) implies that $p(N_1 \cdot \msf{pr}(\chi) + \delta) \in C$ for all $p$ as well. Let
$$
I_{\chi} = \{ \epsilon \in (-1 , 1)^n \subset \Q^n \, | \, - \langle \mu_i, (N_1 \cdot \msf{pr}(\chi) + \delta) \rangle < \langle \mu_i , \epsilon \rangle, \, \forall i \}.
$$
Since $C$ is $d$-dimensional, there exists some $0 < c < 1$ such that $[-c, c]^n \subset I_{\chi}$. Let $\{ v_j | \, j \in [1, 2^n] \} \subset [-c, c]^n$ be the vertices of the box. Choose $p \in \N$ such that $p \cdot v_j \in \Z^n$ for all $j$. The same argument as in Claims $1$ and $2$ shows that
$$
\chi \leftarrow \chi + q (p (N_1 \cdot \msf{pr}(\chi) + \delta + A \cdot v_j)), \quad \forall q \in \Z_{\ge 0}.
$$
We set $u_j = p (N_1 \cdot \msf{pr}(\chi) + \delta + A \cdot v_j)$. One can check as above that $\chi \leftarrow \chi + \sum_{i = 1}^{2^n} k_i \cdot u_i$ for all $k_i \in \Z_{\ge 0}$.  \end{proof}

\begin{remark}\label{remark:list}
We conclude with a couple of remarks regarding Proposition \ref{prop:shiftingcone}.
\begin{enumerate}[(i)]
\item Note that in the proof of Proposition \ref{prop:shiftingcone} we only used the fact that $C$ is the interior of some $d$-dimensional rational cone.
\item In general, the proposition is false when $\msf{pr}(\chi) \in C$ is replaced by $\msf{pr}(\chi) \in \overline{C}$. \item It would be very interesting to directly relate the sets $\mc{Q}_\chi$ to the G.I.T. fan.
\end{enumerate}
\end{remark}

\section{The rational Cherednik algebra associated to cyclic groups}

\subsection{}
As explained in the introduction, the original motivation for this article was to reproduce the results of \cite{KR} for the rational Cherednik algebra $H_{\mbf{h}}(\Z_m)$ associated to the cyclic group $\Z_m$. These rational Cherednik algebras are parameterized\ff{In this paper the parameters $(h_i)$ and $(\chi_i)$ are used. However the paper \cite{Kuwabarapreprint} uses the parameters $(\kappa_i)$ and $(c_i)$. The different parameterizations are related by $h_i \leftrightarrow \kappa_i$ and $c_i \leftrightarrow \chi_i - \chi_{i+1}$.} by an $m$-tuple $\mbf{h} = (h_i)_{i \in [0,m-1]} \in \C^m$, where the indices are taken modulo $m$. We fix a one dimensional space $\h = \C \cdot y$ and $\h^* = \C \cdot x$ such that $\langle x , y\rangle = 1$. The cyclic group $\Z_m = \langle \varepsilon \rangle$ acts on $\h$ and $\h^*$ via $\varepsilon \cdot y = \zeta^{-1} y$ and $\varepsilon \cdot x = \zeta x$, where $\zeta$ is a fix primitive $m^{th}$ root of unity. The idempotents in $\C \, \Z_m$ corresponding to the simple $\Z_m$-modules are $e_i = \frac{1}{m} \sum_{j = 0}^{m-1} \zeta^{-ij} \varepsilon^j$, $i \in [0,m-1]$, so that $\varepsilon \cdot e_i = \zeta^i e_i$. Then $e_{i+1} \cdot x = x \cdot e_i$ and $e_{i-1} \cdot y = y \cdot e_i$. If we fix $\alpha_{\varepsilon^i} = \sqrt{2} \cdot x$ and $\alpha_{\varepsilon^i}^\vee = \frac{-1}{\sqrt{2}} \cdot y$ then the commutation relations defining $H_{\mbf{h}}(\Z_m)$, as stated in \cite{RouquierQSchur}, become:
$$
\begin{array}{rcl}
\varepsilon \cdot x & = & \zeta x \cdot \varepsilon \\
\varepsilon \cdot y & = & \zeta^{-1} y \cdot \varepsilon \\
\lbrack y , x \rbrack & = & 1 + m \sum_{i = 0}^{m-1} (h_{i+1} - h_i) e_i,
\end{array}
$$
where indices are take modulo $m$.

\subsection{}\label{sec:categoryO} Category $\mc{O} \subset H_{\mbf{h}} \mmod$ is defined to be the subcategory of all finitely generated $H_{\mbf{h}}$-modules such that $y \in \C[\h^*]$ acts locally nilpotently. It is a highest weight category. To each simple $\Z_m$-module $\C \cdot e_i$, one can associate a \textit{standard module} in category $\mc{O}$ defined by
$$
\Delta(e_i) := H_{\mbf{h}} \otimes_{\C[\h^*] \rtimes \Z_m} \C \cdot e_i,
$$
where $y \in \C[\h^*]$ acts as zero on $e_i$. Each $\Delta(e_i)$ has a simple head $L(e_i)$ and $L(e_i) \not\simeq L(e_j)$ for $i \neq j$. The set of simple modules $\{ L(e_i) \}_{i \in [0,m-1]}$ are, up to isomorphism, all simple modules in $\mc{O}$. Fix $i \in [0,m-1]$ and let $c_i$ be the smallest element in $\Z_{\ge 1} \cup \{ \infty \}$ such that $c_i + m h_{i + c_i} - m h_i = 0$. The identity
$$
[y,x^j] = x^{j-1} \Bigl( j + m \sum_{i = 0}^{m-1} (h_{i + j} - h_i) e_i \Bigr), \, \forall j \ge 0,
$$
shows that $L(e_i) = (\C[x] / (x^{c_i})) \otimes e_{i}$. Fix $e := e_0$, the trivial idempotent. The algebra $e H_{\mbf{h}} e$ is called the spherical subalgebra of $H_{\mbf{h}}$. Multiplication by $e$ defines a functor $e \, : \, H_{\mbf{h}} \mmod \longrightarrow e H_{\mbf{h}} e \mmod$ with left adjoint $H_{\mbf{h}} e \otimes_{e H_{\mbf{h}} e} - $. Let $\mc{C} \subset \C^{m}$ be the union of the finitely many hyperplanes defined by the equations $j + m h_{i+j} - m h_{i} = 0$, where $i \in [1,m-1]$ and $j \in [0,m - i]$.

\begin{lem}\label{lem:sphericalequivalence}
The functor $e \, : \, H_{\mbf{h}} \mmod \longrightarrow e H_{\mbf{h}} e \mmod$ is an equivalence if and only if $\mbf{h} \notin \mc{C}$. This implies that $e H_{\mbf{h}} e$ has finite global dimension when $\mbf{h} \notin \mc{C}$.
\end{lem}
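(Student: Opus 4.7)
The plan is to reduce the equivalence question to a concrete criterion on simple modules via Morita theory, and then verify the criterion by direct computation on the simples in category $\mc{O}$.

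First, since $H_{\mbf{h}} e$ is a finitely generated projective left $H_{\mbf{h}}$-module with endomorphism ring $(eH_{\mbf{h}}e)^{\textrm{opp}}$, standard Morita theory shows that the functor $e$ is an equivalence of categories if and only if $H_{\mbf{h}} e$ is a progenerator, equivalently $H_{\mbf{h}} e H_{\mbf{h}} = H_{\mbf{h}}$. This in turn is equivalent to $eL \neq 0$ for every simple $H_{\mbf{h}}$-module $L$: if some $eL = 0$ then $H_{\mbf{h}} e H_{\mbf{h}} \subseteq \ann(L)$ is proper, and conversely $e(H_{\mbf{h}}/H_{\mbf{h}} e H_{\mbf{h}}) = 0$, so any simple quotient of this module is killed by $e$ whenever $H_{\mbf{h}} e H_{\mbf{h}} \neq H_{\mbf{h}}$.

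Next, I would compute $eL(e_i)$ for the simples in category $\mc{O}$ using the description $L(e_i) = (\C[x]/(x^{c_i})) \otimes e_i$ recorded above. Since $\varepsilon$ scales $x^j \otimes e_i$ by $\zeta^{i+j}$, only those monomials with $m \mid (i+j)$ survive $e$. For $i=0$ the choice $j=0$ works, so $eL(e_0) \neq 0$ unconditionally. For $i \in [1,m-1]$ the unique candidate is $j = m - i$, which produces a nonzero invariant precisely when $c_i > m - i$, equivalently when no $j \in [1,m-i]$ satisfies $j + m h_{i+j} - m h_i = 0$. Quantifying over $i \in [1,m-1]$, this is exactly the condition $\mbf{h} \notin \mc{C}$.

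The main obstacle is closing the loop: to upgrade "every simple in $\mc{O}$ survives $e$" to "every simple in all of $H_{\mbf{h}}\mmod$ survives $e$". Equivalently, one needs every simple $H_{\mbf{h}}$-module killed by $e$ to lie in category $\mc{O}$, and therefore to be one of the $L(e_i)$. For the cyclic case I would prove this directly: using the identity
\[
[y, x^j] = x^{j-1}\Bigl(j + m \sum_{i=0}^{m-1}(h_{i+j} - h_i) e_i\Bigr)
\]
together with $\varepsilon x = \zeta x \varepsilon$, one rewrites sufficiently high powers of $y$ on $H_{\mbf{h}}/H_{\mbf{h}} e H_{\mbf{h}}$ in terms of lower-degree elements of the quotient, forcing $y$ to act locally nilpotently and hence placing this quotient, together with all its subquotients, into category $\mc{O}$.

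Finally, for the global dimension assertion, $H_{\mbf{h}}$ itself has finite global dimension because its Bernstein-type filtration has regular associated graded $\C[\mathfrak{h}\oplus\mathfrak{h}^*] \rtimes \Z_m$, and the standard filtration argument applies. Since finite global dimension is preserved under Morita equivalence, the same holds for $e H_{\mbf{h}} e$ whenever $\mbf{h} \notin \mc{C}$.
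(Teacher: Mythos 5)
Your overall structure matches the paper's: reduce $e$ being an equivalence to $H_{\mbf{h}}eH_{\mbf{h}} = H_{\mbf{h}}$, reduce that to ``no simple is killed by $e$'', pass this to simples in category $\mc{O}$, compute $eL(e_i)$ explicitly, and finish the global-dimension claim by Morita equivalence plus the fact that $H_{\mbf{h}}$ has finite global dimension. Your explicit computation of $eL(e_i)$ and the $\mbf{h}\notin\mc{C}$ dictionary is correct.

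The genuine difference is the middle step, where the paper cites Ginzburg's Generalized Duflo Theorem (\cite[Theorem 2.3]{Primitive}), which says that every primitive ideal of $H_{\mbf{h}}$ is the annihilator of a simple object of $\mc{O}$; from this, a simple killed by $e$ admits a companion simple in $\mc{O}$ with the same annihilator, also killed by $e$. You instead propose to show directly that every simple killed by $e$ already lies in $\mc{O}$, by arguing that $y$ acts locally nilpotently on $H_{\mbf{h}}/H_{\mbf{h}}eH_{\mbf{h}}$. That claim is in fact correct in the cyclic case, and it buys a fully self-contained and elementary proof that avoids the machinery of Ginzburg's theorem. However, the mechanism you sketch (working with the commutator identity $[y,x^j]=\cdots$ to ``rewrite high powers of $y$ in lower degree'') is not the route that actually delivers it. The cleaner and decisive observation is that imposing $\bar e_0 = 0$ in the quotient, together with the idempotent exchange relations $e_{i}x = x e_{i-1}$ and $e_{i-1}y = y e_i$, gives $\bar e_1 \bar x = \bar x\,\bar e_0 = 0$, hence inductively $\bar e_k \bar x^{\,k} = 0$ for $k\in[1,m-1]$; since $\sum_{k=1}^{m-1}\bar e_k = 1$ in the quotient, this forces $\bar x^{\,m-1} = 0$, and symmetrically $\bar y^{\,m-1} = 0$. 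Thus $H_{\mbf{h}}/H_{\mbf{h}}eH_{\mbf{h}}$ is a finite-dimensional algebra on which $\bar y$ is nilpotent, so every simple module over it is finite-dimensional with $y$ nilpotent and therefore lies in $\mc{O}$. With this substitution in place your argument is complete and offers a more elementary alternative to the paper's; as written, though, the key step is under-specified and points toward an approach (commuting $y$ across $x$-powers) that by itself does not obviously yield local nilpotence.
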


\begin{proof}
The functor $e$ will be an equivalence if and only if $H_{\mbf{h}} e H_{\mbf{h}} = H_{\mbf{h}}$. By Ginzburg's Generalized Duflo Theorem \cite[Theorem 2.3]{Primitive}, $H_{\mbf{h}} e H_{\mbf{h}} \neq H_{\mbf{h}}$ implies that there is some simple module in category $\mc{O}$ that is annihilated by $e$. This happens if and only if $\mbf{h} \in \mc{C}$. The second statement follows from the fact that $H_{\mbf{h}}$ has finite global dimension.
\end{proof}

\subsection{The minimal resolution of $\C^2 / \Z_m$}

In order to relate the spherical subalgebra of $H_{\mbf{h}}$ to a $W$-algebra on the resolution of the corresponding Kleinian singularity $\C^2 / \Z_m$ we must describe $e H_{\mbf{h}} e$ as a quantum Hamiltonian reduction. Such an isomorphism is well known and is a particular case of a more general construction by Holland \cite{Holland}. First we describe the minimal resolution of $\C^2 / \Z_m$ as a hypertoric variety. Let $Q$ be the cyclic quiver with vertices $V = \{ v_0, \ds, v_{m-1} \}$ and arrows $u_i : v_{i-1} \rightarrow v_i$ for $i \in [1,m]$ (where $v_m$ is identified with $v_0$). Let $\nu$ be the dimension vector with $1$ at each vertex. Then the space of representations for $Q$ with dimension vector $\nu$ is the affine space
$$
\Rep (Q,\nu) = \{ (u_i)_{i \in [1,m]} \, | \, u_i \in \C \} \simeq \C^m
$$
and we write $\C [ \Rep (Q,\nu)] = \C[x_1, \ds , x_m]$. There is an action of $\T^m = \{ (\lambda_i)_{i \in [1,m]} \, | \, \lambda_i \in \C^{\times} \}$ on $\Rep (Q,\nu)$ given by
$$
\lambda \cdot u_i = \lambda_i \lambda^{-1}_{i-1} u_i \quad \textrm{ and hence } \quad \lambda \cdot x_i = \lambda^{-1}_i \lambda_{i-1} x_i.
$$
The one dimensional torus $\T$ embedded diagonally in $\T^m$ acts trivially on $\Rep (Q,\nu)$. Therefore $\T^{m-1} := \T^m / \T$ acts on $\Rep (Q,\nu)$. The lattice of characters $\X(\T^{m-1})$ is the sublattice of $\X(\T^m) = \bigoplus_{i = 1}^{m} \Z \cdot v_i$ consisting of points $\phi = \sum_{i = 1}^m \phi_i v_i$ such that $\sum_{i = 1}^m \phi_i = 0$. We fix the basis $\{ w_i = v_i - v_{i + 1} \, | \, i \in [0,m-2] \}$ of $\X(\T^{m-1})$ so that $\phi = (\phi_i)_{i \in [1,n]} = \sum_{i = 1}^{m-1} \chi_i w_i$, where $\chi_i = \sum_{j = 1}^i \phi_i$. Then the $(m-1) \times m$ matrix encoding the action of $\T^{m-1}$ is given by
$$
A = (a_1, \ds , a_m) = \left(
\begin{array}{ccccc}
1 & 0 & \ldots & 0 & -1 \\
0 & 1 & & \vdots & -1 \\
\vdots & & \ddots & 0 & \vdots \\
0 & \ldots & 0 & 1 & -1
\end{array}
\right)
$$

The G.I.T walls in $\mf{t}^*$, where $\mf{t} = \textrm{Lie} ( \T^{m-1})$, are given by the hyperplanes $H_i = ( \chi_i = 0)$, $i \in [1,m-1]$, and $H_{ij} = ( \chi_i = \chi_j )$, $i \neq j \in [1,m-1]$. Hence the $m$-cones are the connected components of the complement to this union of hyperplanes. As was shown originally in terms of hyperk\"ahler manifolds by Kronheimer \cite{Kronheimer} and then by Cassens and Slodowy \cite{CassensSlodowy} in the algebraic setting:

\begin{prop}\label{prop:Kleinianresolution}
Let $\delta$ belong to the interior of an $m$-cone. Then the hypertoric variety $Y(A,\delta)$ is isomorphic to the minimal resolution $\widetilde{\C^2 / \Z_m}$ of the Kleinian singularity $\C^2 / \Z_m$.
\end{prop}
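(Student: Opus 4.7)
The plan is to prove the proposition in two stages: first identify the affine Hamiltonian reduction $Y(A,0)$ with the Kleinian singularity $\C^2/\Z_m$, and then use the general theory from Section~\ref{sec:geometry} together with uniqueness of the minimal symplectic resolution to conclude that $f : Y(A,\delta) \to Y(A,0)$ is that resolution.

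For the affine quotient, I would compute $(R/I)^{\T^{m-1}}$ explicitly. The moment map relations for the specific $A$ say that, writing $T^*\Rep(Q,\nu)$ with coordinates $(x_i,y_i)_{i \in [1,m]}$, the ideal $I$ is generated by $x_iy_i - x_{i+1}y_{i+1}$ for $i \in [1,m-1]$. Setting $z := x_iy_i$ (a common value on $\mu^{-1}(0)$), one checks that a basis of $\T^{m-1}$-invariant monomials is given by $z^k$, $(x_1 \cdots x_m) z^k$ and $(y_1 \cdots y_m) z^k$ for $k \ge 0$. Writing $u := x_1 \cdots x_m$, $v := y_1 \cdots y_m$ one finds the single relation $uv = z^m$, so that
\[
(R/I)^{\T^{m-1}} \;\simeq\; \C[u,v,z]/(uv-z^m) \;\simeq\; \C[\C^2/\Z_m].
\]
This identifies $Y(A,0)$ with the Kleinian singularity $\C^2/\Z_m$.

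Next I would observe that the matrix $A$ above is manifestly unimodular (every $(m-1)\times(m-1)$ minor is $0$ or $\pm 1$), and that the rows of the companion matrix $B$ are all nonzero, so Lemma~\ref{prop:compint} applies. For $\delta$ in the interior of an $m$-cone of $\Delta(\T^{m-1},\mu^{-1}(0))$, Proposition~\ref{prop:smoothres} gives that $Y(A,\delta)$ is smooth, and Proposition~\ref{prop:symplecticvariety} gives that $f : Y(A,\delta) \to Y(A,0) = \C^2/\Z_m$ is a projective symplectic resolution.

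Finally, it is classical that a Kleinian singularity $\C^2/\Gamma$ admits a \emph{unique} projective symplectic (equivalently, crepant) resolution, namely its minimal resolution (this follows because any crepant resolution of a surface singularity has no exceptional curves of self-intersection $-1$, hence equals the minimal resolution). Therefore $f$ must coincide with the minimal resolution $\widetilde{\C^2/\Z_m} \to \C^2/\Z_m$. The main obstacle in this plan is the explicit invariant-theoretic identification of $Y(A,0)$ with $\C^2/\Z_m$: one must carefully check that the three invariants $u,v,z$ generate the invariant ring modulo $I$ and that $uv = z^m$ is the only relation. This is a direct but slightly tedious monomial computation, which is precisely the computation carried out (in a hyperk\"ahler resp.\ GIT setting) in \cite{Kronheimer} and \cite{CassensSlodowy}, so in practice one simply cites those references once the hypertoric setup has been matched with theirs.
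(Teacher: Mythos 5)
Your argument is correct, and it is a genuinely different route from the paper's, because the paper in fact offers no proof of this proposition at all: it simply states the result and attributes it to Kronheimer~\cite{Kronheimer} and to Cassens and Slodowy~\cite{CassensSlodowy}, whose constructions (hyperk\"ahler and GIT/quiver-theoretic, respectively) identify $Y(A,\delta)$ with the minimal resolution directly. Your alternative decomposes the claim into three steps: (a) compute $(R/I)^{\T^{m-1}}$ explicitly --- a monomial $\prod_i x_i^{\alpha_i}y_i^{\beta_i}$ is $\T^{m-1}$-invariant precisely when $\alpha_i-\beta_i$ is independent of $i$, and modulo $I$ the invariants $u=x_1\cdots x_m$, $v=y_1\cdots y_m$, $z=x_iy_i$ generate with sole relation $uv=z^m$, giving $Y(A,0)\simeq\C^2/\Z_m$; (b) note that the displayed $A$ is unimodular and $B=(1,\dots,1)^T$ has no zero row, so Lemma~\ref{prop:compint}, Proposition~\ref{prop:smoothres} and Proposition~\ref{prop:symplecticvariety} make $f\colon Y(A,\delta)\to Y(A,0)$ a projective symplectic resolution of a normal surface; (c) invoke uniqueness of the minimal (equivalently, crepant) resolution of a Du Val singularity. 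The advantage of your route is that it reuses the machinery already established in Section~\ref{sec:geometry} rather than importing the cited references wholesale; the cost is the extra, but entirely standard, surface-theoretic uniqueness step, which you justify correctly by observing that a crepant resolution of a surface singularity cannot contain a $(-1)$-curve in its exceptional fiber. One small inaccuracy in the write-up: the set $\{z^k,\ uz^k,\ vz^k : k\ge 0\}$ is an algebra generating set, not a $\C$-vector-space basis of $(R/I)^{\T^{m-1}}$ (you omit $u^az^k$ and $v^bz^k$ for $a,b\ge 2$), but this does not affect the conclusion that $(R/I)^{\T^{m-1}}\simeq\C[u,v,z]/(uv-z^m)$.
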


As is well-known, the hypertoric variety $Y(A,\delta)$ is a toric variety. It is shown in \cite[Theorem 10.1]{HS} that a hypertoric variety is toric if and only if it is a product of varieties of the form $\widetilde{\C^2 / \Z_m}$ for various $m$. Let us now consider the corresponding quantum Hamiltonian reduction
$$
\U_{\chi} = (\algD (\Rp) \bigm/ \algD (\Rp) (\mu_D - \chi)(\mf{t}))^{\T^{m-1}}.
$$
The quantum moment map in this case is given by
$$
\mu_D \, : \, \mf{t} \longrightarrow \algD (\Rp), \quad t_i \mapsto x_i \p_i - x_m \p_m,  \quad \forall i \in [1,m-1].
$$
Since $\algD (\Rp)^{\T^{m-1}} = \langle \p_1 \cdots \p_{m} , x_1 \cdots x_{m}, x_1 \p_1, \ds , x_{m} \p_{m} \rangle$ and $\langle (\mu_D - \chi)(\mf{t}) \rangle = \langle x_i \p_i - x_m \p_m - \chi_i \, | \, i \in [1,m-1] \rangle$, where we set $\p_i := \p / \p x_i$, $\U_{\chi}$ is generated by $\p_1 \cdots \p_{m} , x_1 \cdots x_{m}$ and $x_m \p_m$.

\subsection{The Dunkl embedding}

Let $\hr := \h \backslash \{ 0 \}$ and denote by $\algD (\hr)$ the ring of algebraic differential operators on $\hr$. In order to show that the spherical subalgebra of $H_\mbf{h}$ is isomorphic to a suitable quantum Hamiltonian reduction we realize $e H_{\mbf{h}} e$ as a subalgebra of $\algD (\hr)$ using the Dunkl embedding. Similarly, using the ``radial parts map'', we will also realize $\U_{\chi}$ as the same subalgebra of $\algD (\hr)$. The Dunkl embedding is the map $\Theta_{\mbf{h}} \, : \, H_{\mbf{h}} \longrightarrow \algD (\hr) \rtimes \Z_m$ defined by
$$
\Theta_{\mbf{h}}(y) = \frac{\txm{d}}{\txm{d} x} + \frac{m}{x} \sum_{i = 0}^{m-1} h_i e_i, \quad \Theta_{\mbf{h}}(x) = x \quad \textrm{ and } \quad \Theta_{\mbf{h}}(\varepsilon) = \varepsilon.
$$
The algebra $\algD (\hr) \rtimes \Z_m$ is filtered by order of differential operators, that is $\deg (\txm{d} / \txm{d} x) = 1$ and  $\deg (x) = \deg (\varepsilon) = 0$. If we define a filtration on $H_{\mbf{h}}$ by setting $\deg (y) = 1$ and $\deg (x) = \deg (\varepsilon) = 0$ then the map $\Theta_\mbf{h}$ is filter preserving. Localizing $H_{\mbf{h}}$ at the regular element $x$ provides an isomorphism
$$
\Theta_{\mbf{h}} \, : \, H_{\mbf{h}}[x^{-1}] \stackrel{\sim}{\longrightarrow} \algD (\hr) \rtimes \Z_m.
$$
Therefore $\Theta_{\mbf{h}}$ is injective. Applying the trivial idempotent produces
$$
\Theta_{\mbf{h}} \, : \, e H_{\mbf{h}} e \longrightarrow e \algD (\hr)e \simeq \algD (\hr)^{\Z_m}.
$$
Let us note that $\gr (H_{\mbf{h}}) \simeq \C [ x, y] \rtimes \Z_m$ and $\gr (e H_{\mbf{h}}e) \simeq \C [ x, y]^{\Z_m}$. Therefore $e H_{\mbf{h}} e$ is generated by $x^m e , xy e $ and $y^m e$. Since $\Theta_{\mbf{h}}(y e_i) = (\txm{d} / \txm{d} x + \frac{m}{x} h_{i})e_{i}$ we get $$
\Theta_{\mbf{h}}(y^m e) = \prod_{i = 1}^m \Bigl(\frac{\txm{d}}{\txm{d} x} + \frac{m}{x} h_i\Bigr) \quad \textrm{ and } \Theta_{\mbf{h}}(xy e) =  x \frac{\txm{d}}{\txm{d} x} + m h_m .
$$
We note that
\beq\label{eq:Dunkleoperator}
\Theta_{\mbf{h}}(y^m e)( x^r ) = \prod_{i = 1}^{m} (r - m + i + m h_i) x^{r - m}.
\eeq

\subsection{The radial parts map}

In this subsection we show that $\U_\chi \simeq \Theta_{\mbf{h}}(e H_{\mbf{h}} e)$. The isomorphism we describe is not new, it was first constructed by Holland \cite{Holland} (see also \cite{Kuwabara}), but we give it in order to fix parameters. There is a natural embedding $\h \hookrightarrow \Rp$ given by $x \mapsto (x, \ds, x)$. This defines a surjective morphism $\C [ \Rp] \rightarrow \C[\h], \, x_i \mapsto x$ which descends to a ``Chevalley isomorphism''
$$
\rho \, : \, \C [ \Rp]^{\T^{m-1}} \stackrel{\sim}{\longrightarrow} \C[\h]^{\Z_l}, \qquad x_1 \cdots x_{m} \mapsto x^m.
$$
Define a section
$$
\rho^{-1} \, : \,  \C[ \h] \longrightarrow \C[\Rp][x_i^{1/m} \, | \, i \in [1,m]] \quad \textrm{ by } \quad x^r \mapsto x_1^{r/m} \cdots x_{m}^{r/m}.
$$
This can be extended to a twisted Harish-Chandra morphism $\hat{\Rad}_\mbf{h} \, : \, \D (\Rp)^{\T} \longrightarrow \algD (\hr )^{\Z_m}$ given by
$$
\hat{\Rad}_\mbf{h} (D)(f) = \rho(\delta_\mbf{h}^{-1} D (\rho^{-1}(f) \delta_{\mbf{h}})), \quad \forall \, f \in \C[\h],
$$
where $\delta_{\mbf{h}} = \prod_{i = 1}^{m} x_i^{h_i + \frac{i-m}{m}}$. Calculating the action of $\hat{\Rad}_\mbf{h} (\p_{1} \cdots \p_{m})$ on $x^r$ and comparing with (\ref{eq:Dunkleoperator}) shows that
$$
\hat{\Rad}_\mbf{h} (m^m \cdot \p_{1} \cdots \p_{m}) = \Theta_{\mbf{h}}(y^m e).
$$
Similarly $\hat{\Rad}_\mbf{h} (x_1 \cdots x_{m}) = \Theta_{\mbf{h}}(x^m e)$ and $\hat{\Rad}_\mbf{h}( x_i \p_{i}) = \frac{1}{m} (x \frac{\txm{d}}{\txm{d} x} + m h_i + i - m)$. This implies that $\hat{\Rad}_{\mbf{h}}$ defines a surjection $\algD (\Rp)^{\T^{m-1}} \twoheadrightarrow \Theta_{\mbf{h}}(e H_{\mbf{h}} e)$. We fix
\beq\label{eq:chih}
\chi_i = h_i - h_m + \frac{i - m}{m}, \quad i \in [1,m-1].
\eeq
Then $\hat{\Rad}_\mbf{h}( x_i \p_{i} - x_m \p_m - \chi_i) = 0$ and $\hat{\Rad}_\mbf{h}$ descends to a surjective morphism
$$
\Rad_\mbf{h} \, : \, \U_\chi \longrightarrow \Theta_{\mbf{h}}(e H_{\mbf{h}} e).
$$
As above, $\algD (\Rp)$ is a filtered algebra by setting $\deg (\p_i) = 1$ and $\deg (x_i) = 0$ for $i \in [1,m]$. This induces a filtration on $\U_\chi$ and we see from the definitions that $\Rad_\mbf{h}$ is filter preserving. Therefore we get a morphism of associated graded algebras
$$
\gr \, \Rad_\mbf{h} \, : \, \gr (\U_\chi) \longrightarrow \gr (e H_{\mbf{h}} e).
$$
Now \cite[Proposition 2.4]{Holland} says that
$$
\gr (\U_\chi) = \C[ \mu^{-1}(0)]^{\T^{m-1}} \simeq \C[x,y]^{\Z_m} = \gr (e H_{\mbf{h}} e).
$$
This isomorphism is realized by $x_1 \cdots x_m \mapsto x^m$, $m^m \cdot y_1 \cdots y_m \mapsto y^m$ and $x_1 y_1 \mapsto \frac{1}{m} \cdot xy$. But we see from above that this is precisely what $\gr \, \Rad_\mbf{h}$ does to the principal symbols of the generators $m^m \cdot \p_{1} \cdots \p_{m}$, $x_1 \cdots x_m$ and $x_1 \p_1$ of $\U_\chi$. Therefore $\gr \, \Rad_\mbf{h}$ is an isomorphism and hence $\Theta_{\mbf{h}}^{-1} \circ \Rad_\mbf{h} \, : \, \U_\chi \stackrel{\sim}{\longrightarrow} e H_{\mbf{h}} e$ is a filtration preserving isomorphism.

\subsection{Localization of $H_{\mbf{h}}(\Z_m)$}

As noted in Proposition \ref{prop:Kleinianresolution}, the hypertoric varieties $Y(A,\delta)$ are all isomorphic provided $\delta$ does not belong to a wall in $\X_\Q$. Therefore, for any $\chi \in \mf{t}^*$, we may refer to the sheaf $\AW_\chi$ on the minimal resolution $\widetilde{\C^2 / \Z_m}$, but the reader should be aware that in doing so we have implicitly fixed an identification $\widetilde{\C^2 / \Z_m} = Y(A,\delta)$. Recall the union of hyperplanes $\mc{C} \subset \C^m$ defined in Lemma \ref{lem:sphericalequivalence}.

\begin{thm}\label{cor:Cherednikequiv}
Choose $\mbf{h} \in \C^m \backslash \mc{C}$ and let $\chi$ be defined by (\ref{eq:chih}). Write $\AW_{\mbf{h}} := \AW_{\chi}$ for the sheaf of $W$-algebras on $\widetilde{\C^2 / \Z_m}$. Then the functor $\Hom_{\Mod_F^{\mathrm{good}}(\AW_{\mbf{h}})}(\AW_{\mbf{h}} , - )$ defines an equivalence of categories
$$
\Mod^{\mathrm{good}}_F(\AW_{\mbf{h}}) \, \stackrel{\sim}{\longrightarrow} \, e H_{\mbf{h}} e \mmod
$$
with quasi-inverse $\AW_{\mbf{h}} \otimes_{e H_{\mbf{h}} e} - $. Moreover, the functor $H_{\mbf{h}} e \otimes_{e H_{\mbf{h}} e} \Hom_{\Mod_F^{\mathrm{good}}(\AW_{\mbf{h}})}(\AW_{\mbf{h}} , - )$ defines an equivalence of categories 
$$
\Mod^{\mathrm{good}}_F(\AW_{\mbf{h}}) \, \stackrel{\sim}{\longrightarrow} \, H_{\mbf{h}} \mmod
$$
with quasi-inverse $\AW_{\mbf{h}} \otimes_{e H_{\mbf{h}} e} e H_{\mbf{h}} \otimes_{H_{\mbf{h}}} - $.
\end{thm}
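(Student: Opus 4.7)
The plan is to combine Corollary \ref{cor:finiteglobaldim} with the Morita equivalence of Lemma \ref{lem:sphericalequivalence}, transporting statements about $\U_\chi$ into statements about $e H_{\mbf{h}} e$ via the filtered ring isomorphism $\Theta_{\mbf{h}}^{-1} \circ \Rad_{\mbf{h}} \colon \U_\chi \xrightarrow{\sim} e H_{\mbf{h}} e$ established in the preceding subsection.

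For the first equivalence, I would verify the hypotheses of Corollary \ref{cor:finiteglobaldim}. Since $\mbf{h} \notin \mc{C}$, Lemma \ref{lem:sphericalequivalence} makes $e H_{\mbf{h}} e$ Morita equivalent to $H_{\mbf{h}}$, so $e H_{\mbf{h}} e$ (and hence $\U_\chi$) inherits finite global dimension from $H_{\mbf{h}}$. To check that $\msf{pr}(\chi)$ lies in the interior of a $d$-cone of $\Delta(\T^{m-1}, \mu^{-1}(0))$, I would compare the defining equations $j + m h_{i+j} - m h_{i} = 0$ of $\mc{C}$ with the formula $\chi_i = h_i - h_m + (i-m)/m$: each wall $H_i = (\chi_i = 0)$ pulls back to the hyperplane with parameters $(i, m - i)$ in $\mc{C}$ (using $h_m = h_0$), while each wall $H_{ij} = (\chi_i = \chi_j)$ with $j > i$ pulls back to the hyperplane with parameters $(i, j - i)$ in $\mc{C}$. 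Consequently the $\chi_i$ are nonzero and pairwise distinct as complex numbers, so a suitably generic $\msf{pr}$ places $\msf{pr}(\chi)$ off every wall and in the interior of some $d$-cone $C$. Choosing any $\delta \in C \cap \X$, the variety $Y(A, \delta)$ is identified with $\widetilde{\C^2 / \Z_m}$, and Corollary \ref{cor:finiteglobaldim} yields the first equivalence after transport along the isomorphism $\Theta_{\mbf{h}}^{-1} \circ \Rad_{\mbf{h}}$.

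For the second equivalence I would compose the first with the Morita equivalence $H_{\mbf{h}} \mmod \simeq e H_{\mbf{h}} e \mmod$ of Lemma \ref{lem:sphericalequivalence}, whose functor is $e \cdot (-) = e H_{\mbf{h}} \otimes_{H_{\mbf{h}}} -$ and whose quasi-inverse is $H_{\mbf{h}} e \otimes_{e H_{\mbf{h}} e} -$. Tracing the functors yields $H_{\mbf{h}} e \otimes_{e H_{\mbf{h}} e} \Hom_{\Mod^{\mathrm{good}}_F(\AW_{\mbf{h}})}(\AW_{\mbf{h}}, -)$ with quasi-inverse $\AW_{\mbf{h}} \otimes_{e H_{\mbf{h}} e} e H_{\mbf{h}} \otimes_{H_{\mbf{h}}} -$, exactly as claimed. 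The main obstacle, conceptually, is the combinatorial identification of the walls of the G.I.T.\ fan with hyperplanes of $\mc{C}$; once that is in hand, the remainder of the proof is a routine composition of the equivalences already at our disposal.
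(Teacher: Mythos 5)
Your proposal follows essentially the same route as the paper's own proof: show that the parameter map $\mbf{h}\mapsto\chi$ of (\ref{eq:chih}) carries the union of hyperplanes $\mc{C}$ precisely onto the union of G.I.T.\ walls $\{\chi_i = 0\}\cup\{\chi_i=\chi_j\}$, conclude from Lemma~\ref{lem:sphericalequivalence} that $\U_\chi\simeq e H_{\mbf{h}}e$ has finite global dimension for $\mbf{h}\notin\mc{C}$, and then feed this into Corollary~\ref{cor:finiteglobaldim}; the second equivalence is then the composite with the Morita functors $e\cdot(-)$ and $H_{\mbf{h}}e\otimes_{eH_{\mbf{h}}e}-$. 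Your explicit matching $(i,m-i)\leftrightarrow H_i$ and $(i,j-i)\leftrightarrow H_{ij}$ is exactly the computation behind the paper's assertion that the image of $\mc{C}$ is the wall arrangement. One small mismatch worth flagging: you propose to secure the hypothesis $\msf{pr}(\chi)\in C$ by choosing ``a suitably generic $\msf{pr}$,'' whereas the paper fixes $\msf{pr}$ once and for all at the outset of Section~5; the paper itself elides this point (it asserts that $\chi$ is off the \emph{complex} walls and treats that as sufficient), but a fixed $\Q$-linear $\msf{pr}$ can in principle send a nonzero $\chi_i$ to $0$, so neither your phrasing nor the paper's disposes of this cleanly — your version at least makes the dependence on the choice of $\msf{pr}$ visible. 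That caveat aside, the proof is sound and is, modulo wording, the paper's argument.
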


\begin{proof}
The condition $\chi_i \neq \chi_j$ for $i \neq j \in [1,m-1]$ translates, via (\ref{eq:chih}), into $h_i - h_j + \frac{i - j}{m} \neq 0$ for all $i \neq j \in [1,m-1]$. Similarly, the condition $\chi_i \neq 0$ for all $i \in [1,m-1]$ translates into $h_i - h_m + \frac{i - m}{m} \neq 0$ for all $i \in [1,m-1]$. Therefore the linear map $\C^m \rightarrow \C^{m-1}$ defined by (\ref{eq:chih}) maps the union of hyperplanes $\mc{C}$ onto 
$$
\{ \chi \in \C^{m-1} \ | \ \chi_i = \chi_j  \, (i \neq j \in [1,m-1]) \, \textrm{ or } \, \chi_i = 0, \, (i \in [1,m-1]) \},
$$
which is precisely the union of the G.I.T. walls in $\C^{m-1}$. Therefore Lemma \ref{lem:sphericalequivalence} implies that $\U_\chi$ has finite global dimension when $\chi$ lies in the interior of some G.I.T. cone $C$. Now the theorem follows from Corollary \ref{cor:finiteglobaldim}.
\end{proof}

\begin{remark}
In the above situation it is possible to explicitly calculate the sets $\mc{Q}_{\chi}$ and hence describe the partial ordering on comparability classes as defined in (\ref{sec:QHRone}). However the answer is not very illuminating.
\end{remark}

Finally, we would just like to note the various forms in which the rational Cherednik algebra $H_{\mbf{h}}(\Z_m)$ appears in the literature. It is isomorphic to the deformed preprojective algebra of type $A$ as studied by Crawley-Boevey and Holland in \cite{CrawleyBoeveyHolland}. 
It is well-known that its spherical subalgebra $e H_{\mbf{h}}(\Z_m) e$ coincides with a ``generalized $U(\mf{sl}_2)$-algebra'', as studied by Hodges \cite{Hodges} and Smith \cite{Smith}. Combining this fact with Premet's results in \cite{Premet} shows that $e H_{\mbf{h}}(\Z_m) e$ is also isomorphic to the finite $\mathcal{W}$-algebra associated to $\mf{gl}_m(\C)$ at a subregular nilpotent element. 
Recently, in \cite{Losev} Losev has constructed explicit isomorphisms between the spherical subalgebra of certain rational Cherednik algebras and their related finite $\mathcal{W}$-algebras, which as a special case, gives the above mentioned isomorphism. 

Musson \cite{Musson} and Boyarchenko \cite{Boyarchenko} studied a certain localization of $e H_{\mbf{h}}(\Z_m) e$ by using the formalism of directed algebras (or $\Z$-algebras). Analogous localizations for finite $\mathcal{W}$-algebras were established by Ginzburg in \cite{HC-bimod}. Recently, Dodd and Kremnizer \cite{DK} described a localization theorem for finite $\mathcal{W}$-algebras in the spirit of Kashiwara-Rouquier, and in particular for the finite $W$-algebra isomorphic to $e H_{\mbf{h}}(\Z_m) e$. However, their result is via a different quantum Hamiltonian reduction than the one used in Theorem \ref{cor:Cherednikequiv}.


In \cite{Kuwabarapreprint}, the second author gives an explicit description of the standard modules $\Delta(e_i)$ and simple modules $L(e_i)$ as sheaves of $\AW_{\mbf{h}}$-modules on the minimal resolution.

\section*{Acknowledgments}

The authors would like to express their gratitude to Professors Iain Gordon, Masaki Kashiwara and Raphael Rouquier for their helpful comments and many fruitful discussions. They would also like to thank Professors Brend Sturmfels and Tam\'as Hausel for outlining the proof of Lemma \ref{prop:compint}. The first author thanks Maurizio Martino and Olaf Schn\"urer for stimulating discussions. The second author was partially supported by Grant-in-Aid for Young Scientists (B) 21740013, and by GCOE `Fostering top leaders in mathematics', Kyoto University. He was also partially supported by Basic Science Research Program through the National Research Foundation of Korea (NRF) grant funded by the Korea government (MEST)(2010-0019516).

\def\cprime{$'$}


\end{document}